\newtheorem{defin}{Definition}
\newtheorem{lemma}{Lemma}
\newtheorem{prop}{Proposition}
\newtheorem{theo}{Theorem}
\newtheorem{corol}{Corollary}
\newenvironment{proof}{\medskip\par\noindent{\bf Proof}}{\hfill $\Box$
\medskip\par}
\begin{document}
\title{On parametric Gevrey asymptotics for some Cauchy problems in quasiperiodic function spaces}
\author{{\bf A. Lastra\footnote{The author is partially supported by the project MTM2012-31439 of Ministerio de Ciencia e
Innovacion, Spain}, S. Malek\footnote{The author is partially supported by the french ANR-10-JCJC 0105 project and the PHC
Polonium 2013 project No. 28217SG.}}\\
University of Alcal\'{a}, Departamento de F\'{i}sica y Matem\'{a}ticas,\\
Ap. de Correos 20, E-28871 Alcal\'{a} de Henares (Madrid), Spain,\\
University of Lille 1, Laboratoire Paul Painlev\'e,\\
59655 Villeneuve d'Ascq cedex, France,\\
{\tt alberto.lastra@uah.es}\\
{\tt Stephane.Malek@math.univ-lille1.fr }}
\date{March, 18 2014}
\maketitle
\thispagestyle{empty}
{ \small \begin{center}
{\bf Abstract}
\end{center}
We investigate Gevrey asymptotics for solutions to nonlinear parameter depending Cauchy problems with $2\pi$-periodic coefficients, for
initial data living in a space of quasiperiodic functions. By means of the Borel-Laplace
summation procedure, we construct sectorial holomorphic solutions which are shown to share the same formal power series as asymptotic
expansion in the perturbation parameter. We observe a small divisor phenomenon which emerges from the quasiperiodic nature of the
solutions space and which is the origin of the Gevrey type divergence of this formal series. Our result rests on the classical
Ramis-Sibuya theorem which asks to prove that the difference of any two neighboring constructed solutions satisfies some exponential decay.
This is done by an asymptotic study of a Dirichlet-like series whose exponents are positive real numbers which accumulate to
the origin.\medskip

\noindent Key words: asymptotic expansion, Borel-Laplace transform, Cauchy problem, formal power series,
nonlinear integro-differential equation, nonlinear partial differential equation, singular perturbation. 2000 MSC: 35C10, 35C20.} \bigskip \bigskip

\section{Introduction}

We consider a family of nonlinear Cauchy problems of the form
\begin{multline}
( \epsilon^{r_{3}}(t^{2}\partial_{t} + t)^{r_2} + (-i\partial_{z}+1)^{r_1}) \partial_{x}^{S}X_{i}(t,z,x,\epsilon) \\
= \sum_{\underline{k}=(s,k_{0},k_{1},k_{2}) \in \mathcal{S}} b_{\underline{k}}(z,x,\epsilon)
t^{s}(\partial_{t}^{k_0}\partial_{z}^{k_1}\partial_{x}^{k_2}X_{i}(t,z,x,\epsilon)\\
+ \sum_{\underline{l}=(l_{0},l_{1}) \in \mathcal{N}} c_{\underline{l}}(z,x,\epsilon) t^{l_{0}+l_{1}-1}(X_{i}(t,z,x,\epsilon))^{l_1}
\label{main_CP_intro}
\end{multline}
for given initial data
\begin{equation}
(\partial_{x}^{j}X_{i})(t,z,0,\epsilon) = \Xi_{i,j}(t,z,\epsilon) \ \ , \ \ 0 \leq i \leq \nu-1, 0 \leq j \leq S-1 \label{main_CP_init_cond_intro}
\end{equation}
where $\epsilon$ is a complex parameter, $S,r_{1},r_{2},r_{3}$ are some positive integers, $\mathcal{S}$ is a finite subset of
$\mathbb{N}^{4}$ and $\mathcal{N}$ is a finite subset of $\mathbb{N}^{2}$ that fulfills the constraints (\ref{shape_main_CP}).
The coefficients $b_{\underline{k}}(z,x,\epsilon)$ of the linear part and $c_{\underline{l}}(z,x,\epsilon)$ of the nonlinear part are 
$2\pi$-periodic Fourier series
$$ b_{\underline{k}}(z,x,\epsilon) = \sum_{\beta \geq 0} b_{\underline{k},\beta}(x,\epsilon) e^{iz \beta} \ \ , \ \
c_{\underline{l}}(z,x,\epsilon) = \sum_{\beta \geq 0} c_{\underline{l},\beta}(x,\epsilon) e^{iz \beta} $$
in the variable $z$ with coefficients $b_{\underline{k},\beta}(x,\epsilon), c_{\underline{l},\beta}(x,\epsilon)$ in
$\mathcal{O}\{ x,\epsilon \}$ (which denotes the Banach space
of bounded holomorphic functions in $(x,\epsilon)$ on some small polydisc $D(0,\rho) \times D(0,\epsilon_{0})$ centered at the origin in
$\mathbb{C}^{2}$ with supremum norm). We assume that all Fourier coefficients
$b_{\underline{k},\beta}(x,\epsilon)$, $c_{\underline{l},\beta}(x,\epsilon)$ have exponential decay in $\beta$ (see
(\ref{b_sk_beta_decay}), (\ref{c_l_beta_decay})). Hence,
$b_{\underline{k}}(z,x,\epsilon)$ and $c_{\underline{l}}(z,x,\epsilon)$ define bounded holomorphic functions on
$H_{\rho'} \times D(0,\rho) \times D(0,\epsilon_{0})$ where $H_{\rho'} = \{ z \in \mathbb{C} / |\mathrm{Im}(z)| < \rho' \}$ is some
strip of width $2\rho'>0$.

The initial data are quasiperiodic in the variable $z$ and are constructed as follows,
\begin{equation}
\Xi_{i,j}(t,z,\epsilon) = \sum_{\beta_{0},\ldots,\beta_{l} \geq 0}
\Xi_{i,\beta_{0},\ldots,\beta_{l},j}(t,\epsilon) \frac{\exp( iz( \sum_{j=0}^{l} \beta_{j} \xi_{j} )) }{\beta_{0}! \cdots \beta_{l}!}
\label{main_CP_init_data_intro}
\end{equation}
where $\xi_{0}=1$ and $\xi_{1},\ldots,\xi_{l}$ are real algebraic numbers (for some integer $l \geq 1$)
such that the family $\{1, \xi_{1}, \ldots, \xi_{l} \}$ is $\mathbb{Z}$-linearly independent and where
the coefficients $\Xi_{i,\beta_{0},\ldots,\beta_{l},j}(t,\epsilon)$ are bounded holomorphic
functions on $\mathcal{T} \times \mathcal{E}_{i}$, where $\mathcal{T}$ is a fixed open bounded sector centered at 0
and $\underline{\mathcal{E}} = \{ \mathcal{E}_{i} \}_{0 \leq i \leq \nu-1}$ is a family of open bounded sectors centered at the origin and
whose union form a covering of $\mathcal{V} \setminus \{ 0 \}$, where $\mathcal{V}$ denotes some bounded neighborhood of 0.
These functions (\ref{main_CP_init_data_intro}) are constructed in such a way that they define holomorphic functions on
$\mathcal{T} \times H_{\rho_{1}'} \times \mathcal{E}_{i}$ for some $0<\rho_{1}'<\rho'$. 

Recall that a function
$f : \mathbb{R} \rightarrow \mathbb{E}$ (where $\mathbb{E}$ denotes some vector space) is said to be \emph{quasiperiodic} with
period $w=(w_{0},\ldots,w_{l}) \in \mathbb{R}^{l+1}$, for some integer $l \geq 1$, if there exists a function
$F : \mathbb{R}^{l+1} \rightarrow \mathbb{E}$ such that for all $0 \leq j \leq l$, the partial function
$x_{j} \mapsto F(x_{0},\ldots,x_{j},\ldots,x_{l})$ is $w_{j}$-periodic on $\mathbb{R}$, for all fixed $x_{k} \in \mathbb{R}$ when $k \neq j$,
which satisfies $f(t) = F(t,\ldots,t)$ (see for instance \cite{na} for a definition and properties of quasiperiodic functions).
In particular, one can check that the functions (\ref{main_CP_init_data_intro}) are quasiperiodic with period
$w=(2\pi,2\pi/\xi_{1},\ldots,2\pi/\xi_{l})$ when seeing as functions of the variable $z$.

Our main purpose is the construction of actual holomorphic functions $X_{i}(t,z,x,\epsilon)$ to the problem
(\ref{main_CP_intro}), (\ref{main_CP_init_cond_intro}) on the domains
$\mathcal{T} \times H_{\rho_{1}'} \times D(0,\rho_{1}) \times \mathcal{E}_{i}$ for some small disc
$D(0,\rho_{1}) \subset \mathbb{C}$ and the
analysis of their asymptotic expansions as $\epsilon$ tends to zero on $\mathcal{E}_{i}$, for all $0 \leq i \leq \nu-1$. More precisely, we can
state our main result as follows.\medskip

\noindent {\bf Main statement} \emph{We take a set of directions $d_{i} \in \mathbb{R}$, $0 \leq i \leq \nu-1$, such that
$d_{i} \neq \pi \frac{2k+1}{r_2}$, for $0 \leq k \leq r_{2}-1$, which are assumed to satisfy moreover
$$ \frac{r_3}{r_2}\mathrm{arg}(\epsilon) + \mathrm{arg}(t) \in (d_{i}-\theta,d_{i}+\theta) $$
for all $\epsilon \in \mathcal{E}_{i}$, all $t \in \mathcal{T}$, all $0 \leq i \leq \nu-1$, for some fixed $\theta > \pi$. We make the hypothesis that
the coefficients $\Xi_{i,\beta_{0},\ldots,\beta_{l},j}(t,\epsilon)$ of the initial data (\ref{main_CP_init_data_intro}) can be expressed as
Laplace transforms
$$ \Xi_{i,\beta_{0},\ldots,\beta_{l},j}(t,\epsilon) = \frac{1}{\epsilon^{r_{3}/r_{2}}t}
\int_{L_{d_i}} V_{i,\beta_{0},\ldots,\beta_{l},j}(\tau,\epsilon) e^{-\tau/(\epsilon^{r_{3}/r_{2}}t)} d\tau $$
on $\mathcal{T} \times \mathcal{E}_{i}$ along the halfline $L_{d_i} = \mathbb{R}_{+}e^{\sqrt{-1}d_{i}}$, where
$V_{i,\beta_{0},\ldots,\beta_{l},j}(\tau,\epsilon)$ is a family of holomorphic functions which share the exponential growth constraints
(\ref{V_beta_j_defin}) with respect to $\tau$, the uniform bound estimates (\ref{norm_varphi_j<I})
and the analytic continuation property (\ref{V_Udi_j_analyt_cont_V_j}).}

\emph{Then, in Proposition 12, we construct a family of holomorphic and bounded functions
\begin{equation}
X_{i}(t,z,x,\epsilon) = \sum_{\beta_{0},\ldots,\beta_{l} \geq 0} X_{i,\beta_{0},\ldots,\beta_{l}}(t,x,\epsilon)
\frac{\exp( iz( \sum_{j=0}^{l} \beta_{j} \xi_{j} )) }{\beta_{0}! \cdots \beta_{l}!} \label{quasiperiodic_X_i_intro}
\end{equation}
which are quasiperiodic with period $w=(2\pi,2\pi/\xi_{1},\ldots,2\pi/\xi_{l})$ in the variable $z$ and which solve the
problem (\ref{main_CP_intro}), (\ref{main_CP_init_cond_intro}) on the products
$\mathcal{T} \times H_{\rho_{1}'} \times D(0,\rho_{1}) \times \mathcal{E}_{i}$, where
$\rho_{1}'>0$ satisfies the inequality (\ref{choice_M0_rho_1_prime}) and for some small radius $0 < \rho_{1}<\rho$. Moreover, the differences
$X_{i+1}(t,z,x,\epsilon) - X_{i}(t,z,x,\epsilon)$ satisfy the exponential decay (\ref{|X_i+1_minus_X_i|<}) whose type depends on the
constants $r_{1},r_{2},r_{3}$ and on the degree $h+1$ any algebraic number field $\mathbb{Q}(\xi)$ containing $\xi_{1},\ldots,\xi_{l}$.}

\emph{In Theorem 1, we show the existence of a formal series
\begin{equation}
 \hat{X}(\epsilon) = \sum_{k \geq 0} H_{k}(t,z,x) \frac{ \epsilon^{k} }{k!}
\end{equation}
whose coefficients $H_{k}(t,z,x)$ belong to the Banach space of bounded holomorphic functions on
$\mathcal{T} \times H_{\rho_{1}'} \times D(0,\rho_{1})$, which formally solves the equation (\ref{main_CP_intro}) and is moreover
the Gevrey asymptotic expansion of order $\frac{hr_{1}+r_{2}}{r_{3}}$ of $X_{i}$ on $\mathcal{E}_{i}$. In other words, there exist two
constants $C,M>0$ such that
$$ \sup_{t \in \mathcal{T}, z \in H_{\rho_{1}'}, x \in D(0,\rho_{1})} |X_{i}(t,z,x,\epsilon) - \sum_{k=0}^{N-1} H_{k}(t,z,x)
\frac{\epsilon^{k}}{k!}| \leq CM^{N}N!^{\frac{hr_{1}+r_{2}}{r_3}}|\epsilon|^{N} $$
for all $N \geq 1$, all $\epsilon \in \mathcal{E}_{i}$.}

Notice that the problem (\ref{main_CP_intro}), (\ref{main_CP_init_cond_intro}) is singularly perturbed with irregular singularity
(in the sense of T. Mandai, \cite{man}) with respect to $t$ at $t=0$ provided that $r_{2} > r_{1}$. It is of Kowalevski type
if $r_{2}<r_{1}$ (meaning that the hypotheses of the classical Cauchy-Kowalevski theorem (see for instance \cite{ho}, p. 346--349) are
fulfilled for the equation (\ref{main_CP_intro})) and of mixed type when $r_{2}$ and $r_{1}$ are equal.

In a recent work \cite{lamasa}, we have considered singularly perturbed nonlinear Cauchy problems of the form
\begin{equation}
\epsilon^{r_3}(z\partial_{z})^{r_1}(t^{2}\partial_{t})^{r_2}\partial_{z}^{S}u_{i}(t,z,\epsilon) =
F(t,z,\epsilon,\partial_{t},\partial_{z})u_{i}(t,z,\epsilon) + P(t,z,\epsilon,u_{i}(t,z,\epsilon)) \label{SPFCP}
\end{equation}
which carry both a irregular singularity with respect to $t$ at $t=0$ and a Fuchsian singularity (see \cite{geta} for a definition) with respect
to $z$ at $z=0$, for given initial data
\begin{equation}
(\partial_{z}^{j}u_{i})(t,0,\epsilon) = \phi_{i,j}(t,\epsilon) \ \ , \ \ 0 \leq i \leq \nu-1, 0 \leq j \leq S-1, \label{SPFCP_init_cond}
\end{equation}
where $F$ is some linear differential operator with polynomial coefficients and $P$ some polynomial. The initial data $\phi_{j,i}(t,\epsilon)$
were assumed to be holomorphic on products $\mathcal{T} \times \mathcal{E}_{i}$.
Under suitable constraints on the shape of the equation (\ref{SPFCP}) and on the initial data (\ref{SPFCP_init_cond}), we have shown
the existence of a formal series $\hat{u}(\epsilon) = \sum_{k \geq 0} h_{k} \epsilon^{k}/k!$ with coefficients $h_{k}$ belonging to
the Banach space $\mathbb{F}$ of bounded holomorphic functions on $\mathcal{T} \times D(0,\delta)$ (for some $\delta>0$)
equipped with the supremum norm, solution of (\ref{SPFCP}), which is the Gevrey asymptotic expansion of order $\frac{r_{1}+r_{2}}{r_3}$
of actual holomorphic solutions $u_{i}$ of (\ref{SPFCP}), (\ref{SPFCP_init_cond}) on $\mathcal{E}_{i}$ as $\mathbb{F}-$valued
functions, for all $0 \leq i \leq \nu-1$.

Compared to this former result \cite{lamasa}, the singularity nature of the equation (\ref{main_CP_intro}) does not come from
the divergence of the formal series.
This divergence rather emerges from the quasiperiodic structure of the solution space which produces a small divisor problem
(as we will see below) and its Gevrey type depends not only on the type of space
of our initial data but also on the shape of the equation (\ref{main_CP_intro}). It is worth noticing that a similar phenomenon has been
observed in the paper \cite{ioru} for the steady Swift-Hohenberg equation
\begin{equation}
(1+\Delta)^{2}U(\mathbf{x},\mu) - \mu U(\mathbf{x},\mu) + U^{3}(\mathbf{x},\mu) = 0 \label{SH}
\end{equation}
where the authors have constructed formal series solutions
\begin{equation}
U(\mathbf{x},\mu) = \sqrt{\mu} \sum_{n \geq 0} U^{(n)}(\mathbf{x}) \mu^{n} \label{formal_sol_U_SH}
\end{equation}
where the coefficients $U^{(n)}(\mathbf{x})$ belong to some weighted Sobolev space $H^{s}(\Gamma)$ (for well chosen real number $s>0$)
of quasiperiodic Fourier expansions in $\mathbf{x} \in \mathbb{R}^{2}$ of the form
$$ U^{(n)}(\mathbf{x}) = \sum_{\mathbf{k} \in \Gamma} U_{\mathbf{k}} e^{i \mathbf{k} \cdot \mathbf{x} } $$
where $\Gamma = \{ \sum_{j=1}^{Q} m_{j} \mathbf{k}_{j} / (m_{1},\ldots,m_{Q}) \in \mathbb{N}^{Q} \}$ with
$\mathbf{k}_{j}=(\cos(2 \pi \frac{j-1}{Q}), \sin(2 \pi \frac{j-1}{Q})) $ is a so-called quasilattice in $\mathbb{R}^{2}$ for some
integer $Q \geq 8$. They have shown that this formal series (\ref{formal_sol_U_SH}) is actually at most of
Gevrey order $4l$ (for a suitable integer $l$ depending on $Q$) as series in the Hilbert space $H^{s}(\Gamma)$. Their main purpose was
actually to use this result in order to construct approximate smooth quasiperiodic
solutions of (\ref{SH}) up to an exponential small order by means of truncated Laplace transforms.

In a more general setting, the Cauchy problem (\ref{main_CP_intro}), (\ref{main_CP_init_cond_intro}) we consider in this work comes within the
framework of asymptotic analysis of solutions to differential equations or to partial differential equations with periodic or
quasiperiodic coefficients which is a domain of intense research these last years.

In the category of differential equations most of the results concern nonlinear equations of the form
$$ \sum_{k=0}^{K} a_{k}(\epsilon) \partial_{t}^{k}u(t,\epsilon) = F(u(t,\epsilon),t,\epsilon) $$
where the forcing term $F$ contains periodic or quasiperiodic coefficients. These statements deal with the construction of formal
solutions $u(t,\epsilon) = \sum_{l=0}^{+\infty} u_{l}(t) \epsilon^{l}$ which are called Lindstedt series in the
literature. For convergence properties of these series, we quote the seminal work \cite{el} and the overview \cite{ge1}, for Borel resummation
procedures applied more recently, we mention \cite{gebade}. For applications in KAM theory for nearly integrable finitely dimensional
Hamiltonian systems, we may refer to \cite{bage}, \cite{cogagegi}.

In the context of partial differential equations, for existence results of quasiperiodic solutions to general families of nonlinear PDE containing
a small real parameter, we indicate \cite{ya2} and for the construction of periodic solutions to abstract second order nonlinear equations,
we notice \cite{ya1}.
Concerning KAM theory results in the context of PDE such as small nonlinear perturbations of wave equations or Schr\"{o}dinger equations
we mention the fundamental works \cite{crwa}, \cite{ku}, \cite{wa}.\medskip

Now, we explain our main result and the principal arguments needed in its proof. The first step consists (as in \cite{lamasa})
of transforming the equation (\ref{main_CP_intro}) by means of the linear map $T \mapsto T/\epsilon^{r_{3}/r_{2}}$ into an auxiliary
regularly perturbed nonlinear equation (\ref{SCP}). The drawback of this transformation is the appearance of poles in the coefficients of
this new equation with respect to $\epsilon$ at 0.

The approach we follow is the same as in our previous works \cite{lamasa}, \cite{ma1} and is based on a Borel resummation procedure applied
to formal expansions of the form
$$ \hat{Y}(T,z,x,\epsilon) = \sum_{\underline{\beta} = (\beta_{0},\ldots,\beta_{l},\beta_{l+1}) \in \mathbb{N}^{l+2}}
\hat{Y}_{\underline{\beta}}(T,\epsilon)
\frac{\exp( iz( \sum_{j=0}^{l} \beta_{j} \xi_{j} )) }{\beta_{0}! \cdots \beta_{l}!}\frac{x^{\beta_{l+1}}}{\beta_{l+1}!} $$
where $\hat{Y}_{\underline{\beta}}(T,\epsilon) = \sum_{m \geq 0} \chi_{m,\underline{\beta}}(\epsilon) T^{m}/m!$
are formal series in $T$, which formally solves the auxiliary equation (\ref{SCP}) for well chosen initial data (\ref{SCP_init_cond_formal}).
It is worth pointing out that this resummation method known as $\kappa-$summability already enjoys a large success in the study
of Gevrey asymptotics for analytic solutions to linear and nonlinear differential equations with irregular singularity, see for instance 
\cite{ba}, \cite{br},  \cite{cos},  \cite{ec}, \cite{mal}, \cite{ra}, \cite{rasi}.
We show that the formal Borel transform of $\hat{Y}(T,z,x,\epsilon)$ with respect to $T$ given by
$$ \hat{V}(\tau,z,x,\epsilon) = \sum_{\underline{\beta} = (\beta_{0},\ldots,\beta_{l},\beta_{l+1}) \in \mathbb{N}^{l+2}}
\hat{V}_{\underline{\beta}}(\tau,\epsilon)
\frac{\exp( iz( \sum_{j=0}^{l} \beta_{j} \xi_{j} )) }{\beta_{0}! \cdots \beta_{l}!}\frac{x^{\beta_{l+1}}}{\beta_{l+1}!} $$
where $\hat{V}_{\underline{\beta}}(\tau,\epsilon) = \sum_{m \geq 0}
\chi_{m,\underline{\beta}}(\epsilon)\tau^{m}/(m!)^2$,
formally solves a nonlinear convolution integro-differen\-tial Cauchy problem with rational coefficients in $\tau$, holomorphic with respect to
$x$ near the origin and with respect to $z$ in some strip, and meromorphic in $\epsilon$ with a pole at 0, see 
 (\ref{Borel_SCP_modif}), (\ref{Borel_SCP_init_cond_modif}).

For appropriate initial data satisfying the conditions (\ref{V_beta_j_defin}), (\ref{V_Udi_j_analyt_cont_V_j}) and (\ref{norm_varphi_j<I}), we show
(in Proposition 9) that the formal series $\hat{V}(\tau,z,x,\epsilon)$ actually defines a holomorphic function
$V_{i}$ on the product $U_{i} \times H_{\rho_{1}'} \times D(0,\rho_{1}) \times D(0,\epsilon_{0}) \setminus \{ 0 \}$,
for some $0 < \rho_{1}' < \rho'$, $0 < \rho_{1} < \rho$ and where $U_{i}$ is some unbounded open sector with small aperture and with
bisecting direction $d_{i}$ (as described above in the main statement). The functions $V_{i}$ have exponential growth
rate with respect to $(\tau,\epsilon)$ meaning that there exist two constants $C,K>0$ such that
\begin{equation}
\sup_{z \in H_{\rho_{1}'},x \in D(0,\rho_{1})} |V_{i}(\tau,z,x,\epsilon)| \leq Ce^{K|\tau|/|\epsilon|} \label{exp_bounds_V_i}
\end{equation}
for all $\tau \in U_{i}$, $\epsilon \in D(0,\epsilon_{0}) \setminus \{ 0 \}$. Moreover, we show that for all
$\underline{\beta} = (\beta_{0},\ldots,\beta_{l},\beta_{l+1}) \in \mathbb{N}^{l+2}$, the formal series
$\hat{V}_{\underline{\beta}}(\tau,\epsilon)$ actually define holomorphic functions $V_{i,\underline{\beta}}(\tau,\epsilon)$ on
domains $(U_{i} \cup D(0,\rho_{\underline{\beta}})) \times D(0,\epsilon_{0}) \setminus \{ 0 \}$ where $\rho_{\underline{\beta}}$ is a
Riemann type sequence of the form $R/(1+ |\underline{\beta}|)^{hr_{1}/r_{2}}$, for some constant $R>0$, which tends to 0 as
$|\underline{\beta}|=\sum_{j=0}^{l+1} \beta_{j}$ tends to infinity and share the same exponential growth rate, namely that
there exist constants $C>0$, $K>0$, $M>0$ with
\begin{equation}
\sup_{z \in H_{\rho_{1}'},x \in D(0,\rho_{1})} |V_{i,\underline{\beta}}(\tau,\epsilon)| \leq C K^{\sum_{j=0}^{l+1}\beta_{j}}
\beta_{0}!\cdots\beta_{l+1}!e^{M|\tau|/|\epsilon|} \label{exp_bounds_V_i_beta_intro}
\end{equation}
for all $\tau \in U_{i}$, $\epsilon \in D(0,\epsilon_{0}) \setminus \{ 0 \}$. We point out that the occurence of a radius of convergence
shrinking to zero for the coefficients $V_{i,\underline{\beta}}$ near the origin of the Borel transform is due to the presence of a
small divisor phenomenon in the convolution Cauchy problem (\ref{Borel_SCP_modif}), (\ref{Borel_SCP_init_cond_modif}) mentioned above.
In our previous study \cite{lamasa}, a similar outcome was caused by a leading term in the main equation (\ref{SPFCP}) containing a
Fuchsian operator $(z\partial_{z})^{r_1}$. In this analysis, the denominators arise from the function space where the solutions are found,
especially from their Fourier exponents $\sum_{j=0}^{l+1} \beta_{j}\xi_{j}$ which may tend to zero but not faster than a Riemann type
sequence as follows from Lemma 5.

In order to get the estimates described above, we use a majorazing technique described
in Propositions 6, 7 and 8 which reduces the investigation for the bounds (\ref{exp_bounds_V_i_beta_intro}) to the study of a
Cauchy-Kowalevski type problem (\ref{Aux_CP_formal}), (\ref{Aux_CP_formal_init_cond}) in several complex variables
for which local analytic solutions are found in Section 2.1, see Proposition 1. On the way we make use of estimates in weighted Banach spaces
introduced in Section 2.2, see Propositions 2, 3, 4 and Corollary 1, which are very much alike those already seen in the work \cite{lamasa}.

In the next step, for given suitable initial data (\ref{SCP_init_cond}) satisfying (\ref{defin_Y_Ud_j}), we construct
actual solutions
\begin{equation}
Y_{i}(T,z,x,\epsilon) = \sum_{\underline{\beta} = (\beta_{0},\ldots,\beta_{l},\beta_{l+1}) \in \mathbb{N}^{l+2}}
Y_{i,\underline{\beta}}(T,\epsilon)
\frac{\exp( iz( \sum_{j=0}^{l} \beta_{j} \xi_{j} )) }{\beta_{0}! \cdots \beta_{l}!}\frac{x^{\beta_{l+1}}}{\beta_{l+1}!}
\end{equation} 
of the equation (\ref{SCP}), where each function $Y_{i,\underline{\beta}}(T,\epsilon)$ can be written as a Laplace transform
of the function $V_{i,\underline{\beta}}(\tau,\epsilon)$ with respect to $\tau$ along a halfline
$L_{\gamma_i} = \mathbb{R}_{+}e^{i \sqrt{-1}\gamma_{i}} \subset U_{i} \cup \{ 0 \}$. For each $\epsilon \in \mathcal{E}_{i}$,
the function $T \mapsto Y_{i,\underline{\beta}}(T,\epsilon)$ is bounded and holomorphic on
a sector $U_{i,\epsilon}$ with aperture larger than $\pi$, with bisecting direction $\gamma_{i}$ and with radius
$h'|\epsilon|^{r_{3}/r_{2}}$ for some constant $h'>0$. In Proposition 11, we show that the function $Y_{i}$ itself turns out to
define a holomorphic function on $U_{i,\epsilon} \times H_{\rho_{1}'} \times D(0,\rho_{1})$ for some $0< \rho_{1} < \rho$ and where
$\rho_{1}'>0$ satisfies (\ref{choice_M0_rho_1_prime}).

We observe that, for all $0 \leq i \leq \nu-1$, the functions $X_{i}$ defined as
$$ X_{i}(t,z,x,\epsilon) = Y_{i}(\epsilon^{r_{3}/r_{2}}t,z,x,\epsilon)$$ 
actually solve our initial Cauchy problem (\ref{main_CP_intro}), (\ref{main_CP_init_cond_intro}) on the products
$\mathcal{T} \times H_{\rho_{1}'} \times D(0,\rho_{1}) \times \mathcal{E}_{i}$ and bear the representation
(\ref{quasiperiodic_X_i_intro}) as a quasiperiodic function whose Fourier coefficients decay exponentially in $\underline{\beta}$. 
It is worthy to mention that spaces of quasiperiodic Fourier series with exponential decay were also recently used in the paper \cite{dago}
in order to find global in time and quasiperiodic in space solutions to the KdV equation. 

In Proposition 12, we show moreover that the difference of two neighboring solutions $X_{i+1}$ and $X_{i}$ has exponentially small bounds
of order $\frac{r_{3}}{hr_{1}+r_{2}}$, uniformly in $(t,z,x)$, as $\epsilon$ tends to 0 on $\mathcal{E}_{i+1} \cap \mathcal{E}_{i}$.
We observe that for each $\underline{\beta} \in \mathbb{N}^{l+2}$ the difference $X_{i+1,\underline{\beta}}-X_{i,\underline{\beta}}$
for the Fourier coefficients has exponential decay of
order $r_{3}/r_{2}$ but its type is proportional to $\rho_{\underline{\beta}}$ and therefore tends to 0 as $\underline{\beta}$ tends to infinity.
This small denominator phenomenon is the reason of the decreasement of the order $r_{3}/r_{2}$ to $\frac{r_{3}}{hr_{1}+r_{2}}$.
As in our previous study \cite{lamasa}, the bulk of the proof rests on a thorough estimation of a Dirichlet like series
of the form $\sum_{k \geq 0} e^{-1/(k+1)^{\alpha}\epsilon^{r}} a^{k}$ for $0 < a < 1$ and $\alpha,r > 0$ with $\epsilon>0$ small.
This kind of series appears in the context of almost periodic functions introduced by H. Bohr, see for instance the textbook \cite{co}. 
These estimates (\ref{|X_i+1_minus_X_i|<}) are crucial in order to apply a cohomological criterion known in the litterature as
the Ramis-Sibuya theorem (Theorem {\bf(RS)}) which leads to the main result of this paper namely the existence of a formal series
$$ \hat{X}(t,z,x,\epsilon) = \sum_{k \geq 0} H_{k} \frac{\epsilon^k}{k!} $$
with coefficients $H_{k}$ in the Banach space of holomorphic and bounded functions on
$\mathcal{T} \times H_{\rho_{1}'} \times D(0,\rho_{1})$, which formally solves the equation (\ref{main_CP_intro}) and which is,
moreover, the Gevrey asymptotic expansion of order $\frac{hr_{1}+r_{2}}{r_{3}}$ of the functions $X_{i}$ on $\mathcal{E}_{i}$, for all
$0 \leq i \leq \nu-1$.\medskip

\noindent The layout of the paper reads as follows.\\
Section 2.1 is dedicated to the study of a version of the Cauchy Kowalevski theorem for nonlinear PDEs in analytic spaces of functions with
precise control on the domain of existence of their solutions in term of norm estimates of the initial data. In Section 2.2 we establish
some continuity properties of several integro-differential and multiplication operators acting on weighted Banach spaces of holomorphic
functions. These results are applied in Section 2.3 when looking for global solutions with growth constraint at
infinity for a parameter depending nonlinear convolution differential Cauchy problem with singular coefficients.

We recall briefly the classical theory concerning the Borel-Laplace transform and we show some commutation formulas with multiplication and
integro-differential operators in Section 3.1, then we center our attention on finding solutions of an auxiliary nonlinear Cauchy problem
obtained by the linear change of variable $T \mapsto T/\epsilon^{r}$ from our main Cauchy problem in Section 3.2.
The link between this Cauchy problem and the one solved in Section 2.3 is performed by means of Borel-Laplace transforms on the
corresponding solutions.

In Section 4.1, we construct actual holomorphic solutions $X_{i}$, $0 \leq i \leq \nu-1$ of our initial problem and we show exponential
decay of the difference of any two of these solutions with respect to $\epsilon$ on the intersection of their domain of definition, uniformly
in the other variables. Finally, in Section 4.2, we conclude with the main result of the work, that is the existence of a formal power series
with coefficients in an appropriate Banach space, which asymptotically represents the functions $X_{i}$ with a precise control on the
Gevrey order on the sectors $\mathcal{E}_{i}$, for all $0 \leq i \leq \nu-1$.
 
\section{A Global Cauchy problem in holomorphic and quasiperiodic function spaces}

\subsection{A Cauchy Kowalevski theorem in several variables}

In this section, we recall the well known Cauchy Kowaleski theorem in some spaces of analytic functions for which the size
of the domain of existence of the solution can be controlled in term of some supremum norm of the initial data.\medskip

The next Banach spaces are natural extensions to the several variables case of the spaces used in \cite{ma2}.

\begin{defin} Let $l \geq 1$ be some integer. Let $\bar{Z}_{0},\ldots,\bar{Z}_{l},\bar{X}>0$ be positive real numbers. We denote
$G(\bar{Z}_{0},\ldots,\bar{Z}_{l},\bar{X})$ the space of formal series
\begin{equation}
U(Z_{0},\ldots,Z_{l},X) = \sum_{\underline{\beta} = (\beta_{0},\ldots,\beta_{l},\beta_{l+1}) \in
\mathbb{N}^{l+2}} u_{\underline{\beta}} \frac{Z_{0}^{\beta_0} \cdots Z_{l}^{\beta_l}
X^{\beta_{l+1}}}{\beta_{0}! \cdots \beta_{l}!\beta_{l+1}!} \label{U_shape}
\end{equation}
that belong to $\mathbb{C}[[Z_{0},\ldots,Z_{l},X]]$ such that
$$ ||U(Z_{0},\ldots,Z_{l},X)||_{(\bar{Z}_{0},\ldots,\bar{Z}_{l},\bar{X})} =
\sum_{\underline{\beta} = (\beta_{0},\ldots,\beta_{l},\beta_{l+1}) \in
\mathbb{N}^{l+2}} |u_{\underline{\beta}}| \frac{\bar{Z}_{0}^{\beta_0} \cdots \bar{Z}_{l}^{\beta_l}
\bar{X}^{\beta_{l+1}}}{(\sum_{j=0}^{l+1} \beta_{j})!} $$
is finite. One can show that $G(\bar{Z}_{0},\ldots,\bar{Z}_{l},\bar{X})$ equipped with the norm
$||.||_{(\bar{Z}_{0},\ldots,\bar{Z}_{l},\bar{X})}$ are Banach spaces.
\end{defin}

In the next two lemmas we show continuity properties for some linear integro-differential operators acting on the
aforementioned Banach spaces.

\begin{lemma} Let $h_{0},\ldots,h_{l},h_{l+1} \in \mathbb{N}$ with
\begin{equation}
h_{l+1} \geq \sum_{j=0}^{l} h_{j} \label{h_l+1_larger_h_l}.
\end{equation}
Then, for any given $\bar{Z}_{0},\ldots,\bar{Z}_{l},\bar{X}>0$, the operator
$\partial_{Z_0}^{h_0} \cdots \partial_{Z_l}^{h_l}\partial_{X}^{-h_{l+1}}$ is a bounded linear map from
$G(\bar{Z}_{0},\ldots,\bar{Z}_{l},\bar{X})$ into itself. Moreover,
\begin{multline}
||\partial_{Z_0}^{h_0} \cdots \partial_{Z_l}^{h_l}
\partial_{X}^{-h_{l+1}}U(Z_{0},\ldots,Z_{l},X)||_{(\bar{Z}_{0},\ldots,\bar{Z}_{l},\bar{X})} \\
\leq \bar{Z}_{0}^{-h_0} \cdots \bar{Z}_{l}^{-h_l}\bar{X}^{h_{l+1}}
||U(Z_{0},\ldots,Z_{l},X)||_{(\bar{Z}_{0},\ldots,\bar{Z}_{l},\bar{X})}
\label{norm_diff_int_U<norm_U}
\end{multline}
for all $U(Z_{0},\ldots,Z_{l},X) \in G(\bar{Z}_{0},\ldots,\bar{Z}_{l},\bar{X})$.
\end{lemma}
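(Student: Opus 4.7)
The plan is to argue termwise: the operator $\partial_{Z_0}^{h_0}\cdots\partial_{Z_l}^{h_l}\partial_X^{-h_{l+1}}$ acts on each monomial in (\ref{U_shape}) individually, so it suffices to track what it does to the coefficient indexed by $\underline{\beta}=(\beta_0,\ldots,\beta_l,\beta_{l+1})$ and then re-sum. Concretely, the monomial $Z_0^{\beta_0}\cdots Z_l^{\beta_l}X^{\beta_{l+1}}/(\beta_0!\cdots\beta_l!\beta_{l+1}!)$ is sent to $Z_0^{\beta_0-h_0}\cdots Z_l^{\beta_l-h_l}X^{\beta_{l+1}+h_{l+1}}/((\beta_0-h_0)!\cdots(\beta_l-h_l)!(\beta_{l+1}+h_{l+1})!)$ whenever $\beta_j\geq h_j$ for $0\leq j\leq l$, and to zero otherwise.

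Next I would reindex by setting $\alpha_j=\beta_j-h_j$ for $j\leq l$ and $\alpha_{l+1}=\beta_{l+1}+h_{l+1}$, so that the coefficient of $Z_0^{\alpha_0}\cdots Z_l^{\alpha_l}X^{\alpha_{l+1}}/(\alpha_0!\cdots\alpha_{l+1}!)$ in the image is $u_{\alpha_0+h_0,\ldots,\alpha_l+h_l,\alpha_{l+1}-h_{l+1}}$ (with the convention that it vanishes when $\alpha_{l+1}<h_{l+1}$). Plugging into the definition of the norm and reverting to the $\beta$-indices, the norm of the image reads
$$\sum_{\beta_0\geq h_0,\ldots,\beta_l\geq h_l,\ \beta_{l+1}\geq 0}|u_{\underline{\beta}}|\,\frac{\bar{Z}_0^{\beta_0-h_0}\cdots\bar{Z}_l^{\beta_l-h_l}\bar{X}^{\beta_{l+1}+h_{l+1}}}{\bigl(\sum_{j=0}^{l}(\beta_j-h_j)+\beta_{l+1}+h_{l+1}\bigr)!}.$$
Factoring out $\bar{Z}_0^{-h_0}\cdots\bar{Z}_l^{-h_l}\bar{X}^{h_{l+1}}$ leaves a sum in which each term matches the corresponding term of $\|U\|_{(\bar{Z}_0,\ldots,\bar{Z}_l,\bar{X})}$ apart from the denominator.

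The one substantive point is the comparison of denominators: writing $|\underline{\beta}|=\sum_{j=0}^{l+1}\beta_j$, the new denominator is $(|\underline{\beta}|+h_{l+1}-\sum_{j=0}^{l}h_j)!$ whereas the original norm carries $|\underline{\beta}|!$. Hypothesis (\ref{h_l+1_larger_h_l}) is used here and only here: it gives $h_{l+1}-\sum_{j=0}^{l}h_j\geq 0$, so by monotonicity of the factorial the new denominator is at least as large as the old one, and the reciprocal is at most $1/|\underline{\beta}|!$. Dropping the restriction $\beta_j\geq h_j$ (which only enlarges the sum) then bounds the result by $\bar{Z}_0^{-h_0}\cdots\bar{Z}_l^{-h_l}\bar{X}^{h_{l+1}}\|U\|_{(\bar{Z}_0,\ldots,\bar{Z}_l,\bar{X})}$, which is (\ref{norm_diff_int_U<norm_U}). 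There is no genuine obstacle; the only care required is in correctly handling the index shifts and noting that the factorial inequality is exactly what the hypothesis on $h_{l+1}$ is designed to supply.
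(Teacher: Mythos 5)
Your proof is correct and follows essentially the same route as the paper's: compute the action on each monomial, reindex, factor out $\bar{Z}_{0}^{-h_0}\cdots\bar{Z}_{l}^{-h_l}\bar{X}^{h_{l+1}}$, and observe that the hypothesis $h_{l+1}\geq\sum_{j=0}^{l}h_{j}$ makes the resulting quotient of factorials at most $1$. The only cosmetic difference is that you index the sum by the original coefficient index $\underline{\beta}$ while the paper indexes by the monomial index of the image, which amounts to the same comparison.
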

\begin{proof} Let $U(Z_{0},\ldots,Z_{l},X) \in G(\bar{Z}_{0},\ldots,\bar{Z}_{l},\bar{X})$ of the form (\ref{U_shape}).
By definition, we can write
\begin{multline}
||\partial_{Z_0}^{h_0} \cdots \partial_{Z_l}^{h_l}
\partial_{X}^{-h_{l+1}}U(Z_{0},\ldots,Z_{l},X)||_{(\bar{Z}_{0},\ldots,\bar{Z}_{l},\bar{X})}\\
= \sum_{\beta_{0},\ldots,\beta_{l},\beta_{l+1} \geq 0}
 \left( \frac{(\sum_{j=0}^{l} \beta_{j} + h_{j} + \beta_{l+1} - h_{l+1})!}{(\sum_{j=0}^{l+1} \beta_{j})!}
\times \bar{Z}_{0}^{-h_0} \cdots \bar{Z}_{l}^{-h_l}\bar{X}^{h_{l+1}} \right)\\
\frac{|U_{\beta_{0}+h_{0},\ldots,\beta_{l}+h_{l},\beta_{l+1}-h_{l+1}}|}{(\sum_{j=0}^{l} \beta_{j}+h_{j} + \beta_{l+1}-h_{l+1})!}
\bar{Z}_{0}^{\beta_{0}+h_{0}} \cdots \bar{Z}_{l}^{\beta_{l}+h_{l}} \bar{X}^{\beta_{l+1}-h_{l+1}}
\end{multline}
Since (\ref{h_l+1_larger_h_l}), we know that
\begin{equation}
\frac{(\sum_{j=0}^{l} \beta_{j} + h_{j} + \beta_{l+1} - h_{l+1})!}{(\sum_{j=0}^{l+1} \beta_{j})!} \leq 1
\end{equation}
for all $\beta_{0},h_{0},\ldots,\beta_{l},h_{l},\beta_{l+1},h_{l+1} \geq 0$. The estimates (\ref{norm_diff_int_U<norm_U}) follows.
\end{proof}

\begin{lemma} Let $h_{0},\ldots,h_{l},h_{l+1} \in \mathbb{N}$. Let, for all $0 \leq j \leq l$,
$0 < \bar{Z}_{j}^{1} < \bar{Z}_{j}^{0}$ and $0 < \bar{X}^{1} < \bar{X}^{0}$ be positive real numbers. Then, there exists
$C_{2}>0$ (depending on $h_{0},\ldots,h_{l+1}$,$\bar{X}^{0},\bar{X}^{1}$,$\bar{Z}_{j}^{0},\bar{Z}_{j}^{1}$
for $0 \leq j \leq l$) such that
\begin{multline}
|| \partial_{Z_0}^{h_0} \cdots \partial_{Z_l}^{h_l} \partial_{X}^{h_{l+1}}
U(Z_{0},\ldots,Z_{l},X) ||_{(\bar{Z}_{0}^{1},\ldots,\bar{Z}_{l}^{1},\bar{X}^{1})}\\
\leq C_{2}(\bar{Z}_{0}^{1})^{-h_0} \cdots (\bar{Z}_{l}^{1})^{-h_l}(\bar{X}^{1})^{-h_{l+1}}||
U(Z_{0},\ldots,Z_{l},X) ||_{(\bar{Z}_{0}^{0},\ldots,\bar{Z}_{l}^{0},\bar{X}^{0})}
\end{multline}
for all $U(Z_{0},\ldots,Z_{l},X) \in G(\bar{Z}_{0}^{0},\ldots,\bar{Z}_{l}^{0},\bar{X}^{0})$.
\end{lemma}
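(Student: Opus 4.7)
The plan is to proceed by a direct term-by-term comparison of the two norms, where the ``loss of radius'' $\bar{Z}_j^1<\bar{Z}_j^0$, $\bar{X}^1<\bar{X}^0$ absorbs the polynomial growth coming from the derivatives. Write $U(Z_0,\ldots,Z_l,X)$ in the form \eqref{U_shape} with coefficients $u_{\underline{\beta}}$. Since $\partial_{Z_j}^{h_j}$ sends $Z_j^{\beta_j}/\beta_j!$ to $Z_j^{\beta_j-h_j}/(\beta_j-h_j)!$ when $\beta_j\geq h_j$ and to $0$ otherwise, the formal series for $\partial_{Z_0}^{h_0}\cdots\partial_{Z_l}^{h_l}\partial_X^{h_{l+1}}U$ has coefficients $u_{\underline{\beta}+\underline{h}}$ in the monomial $Z_0^{\beta_0}\cdots Z_l^{\beta_l}X^{\beta_{l+1}}/(\beta_0!\cdots\beta_{l+1}!)$.

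Next I would unfold the definition of $\|\cdot\|_{(\bar{Z}_0^1,\ldots,\bar{Z}_l^1,\bar{X}^1)}$ applied to this derivative and then reindex the summation using $\underline{\beta}'=\underline{\beta}+\underline{h}$, yielding
\[
\|\partial_{Z_0}^{h_0}\cdots\partial_X^{h_{l+1}}U\|_{(\bar{Z}_0^1,\ldots,\bar{X}^1)}
=\sum_{\underline{\beta}'\geq \underline{h}}|u_{\underline{\beta}'}|\,
\frac{(\bar{Z}_0^1)^{\beta'_0-h_0}\cdots(\bar{X}^1)^{\beta'_{l+1}-h_{l+1}}}{(|\underline{\beta}'|-|\underline{h}|)!}.
\]
Comparing this term by term with the corresponding term of $(\bar{Z}_0^1)^{-h_0}\cdots(\bar{X}^1)^{-h_{l+1}}\|U\|_{(\bar{Z}_0^0,\ldots,\bar{X}^0)}$ reduces the lemma to showing that
\[
C_2:=\sup_{\underline{\beta}'\geq \underline{h}}\;
\prod_{j=0}^{l}\!\left(\frac{\bar{Z}_j^1}{\bar{Z}_j^0}\right)^{\!\beta'_j}\!
\left(\frac{\bar{X}^1}{\bar{X}^0}\right)^{\!\beta'_{l+1}}\!
\frac{|\underline{\beta}'|!}{(|\underline{\beta}'|-|\underline{h}|)!}
\]
is finite; granting this, the estimate of the statement follows by summing.

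The remaining step is then elementary: since $\bar{Z}_j^1/\bar{Z}_j^0<1$ and $\bar{X}^1/\bar{X}^0<1$, writing $q:=\max_j\{\bar{Z}_j^1/\bar{Z}_j^0,\;\bar{X}^1/\bar{X}^0\}<1$ one has
$\prod_j(\bar{Z}_j^1/\bar{Z}_j^0)^{\beta'_j}(\bar{X}^1/\bar{X}^0)^{\beta'_{l+1}}\leq q^{|\underline{\beta}'|}$, while the factorial quotient $|\underline{\beta}'|!/(|\underline{\beta}'|-|\underline{h}|)!$ is a polynomial in $|\underline{\beta}'|$ of degree $|\underline{h}|=h_0+\cdots+h_{l+1}$. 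Thus $q^{|\underline{\beta}'|}\cdot |\underline{\beta}'|!/(|\underline{\beta}'|-|\underline{h}|)!$ is bounded uniformly in $\underline{\beta}'$ by a constant depending only on $q$ and $|\underline{h}|$, which gives a finite $C_2$ depending on the indicated data.

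I do not expect any real obstacle; the only small care required is to make the reindexation rigorous and to keep track of the fact that the ratio involves $|\underline{\beta}'|!$ (from the original weight) against $(|\underline{\beta}'|-|\underline{h}|)!$ (from the shifted weight), rather than individual factorials $\beta_j!$, which is precisely what allows the exponential factor $q^{|\underline{\beta}'|}$ to dominate the polynomial factor.
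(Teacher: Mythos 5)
Your proof is correct and follows essentially the same route as the paper: unfold the norm of the differentiated series, compare it term by term against the norm at the larger radii, and absorb the polynomial factorial quotient $|\underline{\beta}'|!/(|\underline{\beta}'|-|\underline{h}|)!$ into the geometric decay coming from $\bar{Z}_j^1/\bar{Z}_j^0<1$, $\bar{X}^1/\bar{X}^0<1$. The only cosmetic difference is that you reindex by $\underline{\beta}'=\underline{\beta}+\underline{h}$ and take $q=\max_j\{\bar{Z}_j^1/\bar{Z}_j^0,\bar{X}^1/\bar{X}^0\}$ directly, whereas the paper keeps the unshifted index and interposes an auxiliary $\delta\in(0,1)$ with $\bar{Z}_j^1<\delta\bar{Z}_j^0$, $\bar{X}^1<\delta\bar{X}^0$.
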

\begin{proof} Let $U(Z_{0},\ldots,Z_{l},X) \in G(\bar{Z}_{0}^{0},\ldots,\bar{Z}_{l}^{0},\bar{X}^{0})$ be of the form (\ref{U_shape}).
By definition, we can write
\begin{multline}
|| \partial_{Z_0}^{h_0} \cdots \partial_{Z_l}^{h_l} \partial_{X}^{h_{l+1}}
U(Z_{0},\ldots,Z_{l},X) ||_{(\bar{Z}_{0}^{1},\ldots,\bar{Z}_{l}^{1},\bar{X}^{1})}\\
= \sum_{\beta_{0},\ldots,\beta_{l},\beta_{l+1} \geq 0}
\frac{ |u_{\beta_{0}+h_{0},\ldots,\beta_{l}+h_{l},\beta_{l+1}+h_{l+1}}|}{(\sum_{j=0}^{l+1} \beta_{j} + h_{j})!}
\times  (\bar{Z}_{0}^{1})^{-h_0} \cdots (\bar{Z}_{l}^{1})^{-h_l} (\bar{X}^{1})^{-h_{l+1}}\\
\times \left(  \frac{ (\sum_{j=0}^{l+1} \beta_{j} + h_{j})! }{(\sum_{j=0}^{l+1} \beta_{j})!}
\times
(\bar{Z}_{0}^{1})^{\beta_{0}+h_{0}} \cdots (\bar{Z}_{l}^{1})^{\beta_{l}+h_{l}}(\bar{X}^{1})^{\beta_{l+1}+h_{l+1}} \right)
\label{norm_diff_U_defin_factorize}
\end{multline}
Now, we take some real number $0 < \delta < 1$ such that $\bar{Z}_{j}^{1} < \delta \bar{Z}_{j}^{0}$ for all $0 \leq j \leq l$ and
$\bar{X}^{1} < \delta \bar{X}^{0}$. We get the estimates
\begin{multline}
\frac{ (\sum_{j=0}^{l+1} \beta_{j} + h_{j})! }{(\sum_{j=0}^{l+1} \beta_{j})!}(\bar{Z}_{0}^{1})^{\beta_{0}+h_{0}} \cdots
(\bar{Z}_{l}^{1})^{\beta_{l}+h_{l}}(\bar{X}^{1})^{\beta_{l+1}+h_{l+1}}\\
\leq (\sum_{j=0}^{l+1} \beta_{j} + \sum_{j=0}^{l+1} h_{j})^{(\sum_{j=0}^{l+1} h_{j})}
\delta^{(\sum_{j=0}^{l+1} \beta_{j} + \sum_{j=0}^{l+1} h_{j})}
(\bar{Z}_{0}^{0})^{\beta_{0}+h_{0}} \cdots (\bar{Z}_{l}^{0})^{\beta_{l}+h_{l}}(\bar{X}^{0})^{\beta_{l+1}+h_{l+1}}
\label{maj_quot_factorial_times_geom}
\end{multline}
for all $\beta_{0},h_{0},\ldots,\beta_{l},h_{l},\beta_{l+1},h_{l+1} \geq 0$. Since $0 < \delta < 1$, we know that for any real
number $a>0$, there exists $K_{\delta,a}>0$ depending on $a,\delta$ such that
$$\sup_{n \geq 0} (n+a)^{a} \delta^{n+a} \leq K_{\delta,a}. $$
Hence, we get a constant $C_{2}>0$ depending on $h_{j}$, $0 \leq j \leq l+1$ and $\delta$ with
\begin{multline}
(\sum_{j=0}^{l+1} \beta_{j} + \sum_{j=0}^{l+1} h_{j})^{(\sum_{j=0}^{l+1} h_{j})}
\delta^{(\sum_{j=0}^{l+1} \beta_{j} + \sum_{j=0}^{l+1} h_{j})}
(\bar{Z}_{0}^{0})^{\beta_{0}+h_{0}} \cdots (\bar{Z}_{l}^{0})^{\beta_{l}+h_{l}}(\bar{X}^{0})^{\beta_{l+1}+h_{l+1}}\\
\leq C_{2} (\bar{Z}_{0}^{0})^{\beta_{0}+h_{0}} \cdots (\bar{Z}_{l}^{0})^{\beta_{l}+h_{l}}(\bar{X}^{0})^{\beta_{l+1}+h_{l+1}}
\label{polyn_geom_comp}
\end{multline}
for all $\beta_{0},h_{0},\ldots,\beta_{l},h_{l},\beta_{l+1},h_{l+1} \geq 0$. As a result,
gathering (\ref{norm_diff_U_defin_factorize}), (\ref{maj_quot_factorial_times_geom}) and (\ref{polyn_geom_comp}), we get the
lemma. 
\end{proof}

\begin{lemma} Let $U_{1}(Z_{0},\ldots,Z_{l},X),U_{2}(Z_{0},\ldots,Z_{l},X) \in G(\bar{Z}_{0},\ldots,\bar{Z}_{l},\bar{X})$. Then,
the product $U_{1}U_{2}$ belongs to $G(\bar{Z}_{0},\ldots,\bar{Z}_{l},\bar{X})$. Moreover, we have
\begin{multline}
|| U_{1}(Z_{0},\ldots,Z_{l},X)U_{2}(Z_{0},\ldots,Z_{l},X) ||_{(\bar{Z}_{0},\ldots,\bar{Z}_{l},\bar{X})} \leq\\
|| U_{1}(Z_{0},\ldots,Z_{l},X) ||_{(\bar{Z}_{0},\ldots,\bar{Z}_{l},\bar{X})}
|| U_{2}(Z_{0},\ldots,Z_{l},X) ||_{(\bar{Z}_{0},\ldots,\bar{Z}_{l},\bar{X})}. \label{norm_U1U2<norm_U1_prod_norm_U2}
\end{multline}
In other words, the space $G(\bar{Z}_{0},\ldots,\bar{Z}_{l},\bar{X})$ is a Banach algebra.
\end{lemma}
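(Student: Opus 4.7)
The plan is to establish the Banach algebra property by a direct computation on the coefficients, with the key combinatorial input being a multinomial inequality.

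First, I would expand the product via the Cauchy product formula. Writing
\[
U_1(Z_0,\ldots,Z_l,X) = \sum_{\underline{\alpha}} u_{1,\underline{\alpha}} \frac{Z_0^{\alpha_0}\cdots Z_l^{\alpha_l} X^{\alpha_{l+1}}}{\alpha_0!\cdots \alpha_{l+1}!}, \qquad U_2(Z_0,\ldots,Z_l,X) = \sum_{\underline{\beta}} u_{2,\underline{\beta}} \frac{Z_0^{\beta_0}\cdots Z_l^{\beta_l} X^{\beta_{l+1}}}{\beta_0!\cdots \beta_{l+1}!},
\]
the coefficient of $Z_0^{\gamma_0}\cdots Z_l^{\gamma_l} X^{\gamma_{l+1}}/(\gamma_0!\cdots\gamma_{l+1}!)$ in the product $U_1 U_2$ equals $\sum_{\underline{\alpha}+\underline{\beta}=\underline{\gamma}} \binom{\gamma_0}{\alpha_0}\cdots \binom{\gamma_{l+1}}{\alpha_{l+1}}\, u_{1,\underline{\alpha}} u_{2,\underline{\beta}}$.

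Next, I would apply the triangle inequality to $||U_1U_2||_{(\bar{Z}_0,\ldots,\bar{Z}_l,\bar{X})}$ and interchange the summation so that the sum runs over pairs $(\underline{\alpha},\underline{\beta})$ rather than over $\underline{\gamma}$ with an inner convolution. This yields
\[
||U_1 U_2||_{(\bar{Z}_0,\ldots,\bar{Z}_l,\bar{X})} \leq \sum_{\underline{\alpha},\underline{\beta}} |u_{1,\underline{\alpha}}|\, |u_{2,\underline{\beta}}| \prod_{j=0}^{l+1} \frac{(\alpha_j+\beta_j)!}{\alpha_j!\beta_j!} \cdot \frac{\bar{Z}_0^{\alpha_0+\beta_0}\cdots \bar{Z}_l^{\alpha_l+\beta_l}\bar{X}^{\alpha_{l+1}+\beta_{l+1}}}{(|\underline{\alpha}|+|\underline{\beta}|)!},
\]
where $|\underline{\alpha}|=\sum_{j=0}^{l+1}\alpha_j$ and likewise for $\underline{\beta}$. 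Comparing with the factorized right-hand side $||U_1||\cdot ||U_2||$, it remains to check the pointwise inequality
\[
\frac{1}{(|\underline{\alpha}|+|\underline{\beta}|)!}\prod_{j=0}^{l+1}\frac{(\alpha_j+\beta_j)!}{\alpha_j!\beta_j!} \leq \frac{1}{|\underline{\alpha}|!\,|\underline{\beta}|!},
\]
which is equivalent to the multinomial bound $\binom{|\underline{\alpha}|+|\underline{\beta}|}{|\underline{\alpha}|} \geq \prod_j \binom{\alpha_j+\beta_j}{\alpha_j}$.

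The only real content is this combinatorial inequality, which I would prove by expanding $(\sum_j x_j)^{|\underline{\alpha}|}(\sum_j x_j)^{|\underline{\beta}|} = (\sum_j x_j)^{|\underline{\alpha}|+|\underline{\beta}|}$ by the multinomial theorem and comparing the coefficient of $\prod_j x_j^{\alpha_j+\beta_j}$ on both sides: the left-hand expansion gives a sum of nonnegative multinomial terms of which $\prod_j \binom{\alpha_j+\beta_j}{\alpha_j}\cdot |\underline{\alpha}|!|\underline{\beta}|!/\prod_j(\alpha_j+\beta_j)!$ is just one, while the right-hand expansion yields exactly $(|\underline{\alpha}|+|\underline{\beta}|)!/\prod_j(\alpha_j+\beta_j)!$. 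Dividing by $\prod_j(\alpha_j+\beta_j)!^{-1}$ and rearranging gives the desired inequality; plugging it into the previous display immediately produces (\ref{norm_U1U2<norm_U1_prod_norm_U2}) and shows that $U_1U_2\in G(\bar{Z}_0,\ldots,\bar{Z}_l,\bar{X})$. I do not anticipate any serious obstacle; the only subtlety is keeping the multi-index bookkeeping straight so that the factorials match up before and after the Cauchy convolution is unfolded.
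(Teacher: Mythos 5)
Your proof is correct and follows essentially the same route as the paper: expand the Cauchy product, apply the triangle inequality, and reduce everything to the combinatorial bound $\prod_{j}\binom{\alpha_j+\beta_j}{\alpha_j}\leq\binom{|\underline{\alpha}|+|\underline{\beta}|}{|\underline{\alpha}|}$, which the paper obtains from the identity $(1+x)^{\sum_j\beta_j}=\prod_j(1+x)^{\beta_j}$ and you obtain from the equivalent multivariate identity $(\sum_j x_j)^{|\underline{\alpha}|}(\sum_j x_j)^{|\underline{\beta}|}=(\sum_j x_j)^{|\underline{\alpha}|+|\underline{\beta}|}$. No gaps.
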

\begin{proof} Let
$$ U_{j}(Z_{0},\ldots,Z_{l},X) = \sum_{\beta_{0},\ldots,\beta_{l},\beta_{l+1} \geq 0}
u_{\beta}^{j} \frac{Z_{0}^{\beta_0} \cdots Z_{l}^{\beta_l} X^{\beta_{l+1}}}{\beta_{0}! \cdots \beta_{l}! \beta_{l+1}!} $$
belonging to $G(\bar{Z}_{0},\ldots,\bar{Z}_{l},\bar{X})$ for $j=1,2$. By definition, we can write
\begin{multline}
||U_{1}(Z_{0},\ldots,Z_{l},X)U_{2}(Z_{0},\ldots,Z_{l},X)||_{(\bar{Z}_{0},\ldots,\bar{Z}_{l},\bar{X})}\\
\leq \sum_{\beta_{0},\ldots,\beta_{l},\beta_{l+1} \geq 0} (\sum_{\beta_{j}^{1}+\beta_{j}^{2} = \beta_{j},0 \leq j \leq l+1}
\left( \frac{ \Pi_{j=0}^{l+1} \beta_{j}! }{\Pi_{j=0}^{l+1} \beta_{j}^{1} \Pi_{j=0}^{l+1} \beta_{j}^{2}! }
\frac{1}{(\sum_{j=0}^{l+1} \beta_{j})!} \right)
|u_{(\beta_{0}^{1},\ldots,\beta_{l}^{1},\beta_{l+1}^{1})}^{1} u_{(\beta_{0}^{2},\ldots,\beta_{l}^{2},\beta_{l+1}^{2})}^{2}|)\\
\times \bar{Z}_{0}^{\beta_0} \cdots \bar{Z}_{l}^{\beta_l}\bar{X}^{\beta_{l+1}} \label{maj_defin_norm_U1U2}
\end{multline}
Besides, using the identity $(1+x)^{\sum_{j=0}^{l+1} \beta_{j}} = \Pi_{j=0}^{l+1}(1+x)^{\beta_{j}}$ and the binomial formula, we
get that
\begin{equation}
\frac{ \Pi_{j=0}^{l+1} \beta_{j}! }{\Pi_{j=0}^{l+1} \beta_{j}^{1}! \Pi_{j=0}^{l+1} \beta_{j}^{2}! } \leq
\frac{(\sum_{j=0}^{l+1} \beta_{j})!}{(\sum_{j=0}^{l+1} \beta_{j}^{1})!(\sum_{j=0}^{l+1} \beta_{j}^{2})!} \label{maj_binom}
\end{equation}
for all integers $\beta_{j},\beta_{j}^{1},\beta_{j}^{2} \geq 0$ such that $\beta_{j}=\beta_{j}^{1}+\beta_{j}^{2}$, for $0 \leq j \leq l+1$.
Therefore, the inequality (\ref{norm_U1U2<norm_U1_prod_norm_U2}) follows from (\ref{maj_defin_norm_U1U2}) and (\ref{maj_binom}).
\end{proof}

In the next proposition, we state a version of the Cauchy Kowalevski theorem.

\begin{prop} Let $\mathcal{D}_{1}$ (resp. $\mathcal{D}_{2}$) be a finite subset of $\mathbb{N}^{l+2}$ (resp. $\mathbb{N}^{l+1}$)
and let $S \geq 1$ be an integer such that for all
$(k_{0},\ldots,k_{l+1}) \in \mathcal{D}_{1}$ and all $(p_{0},\ldots,p_{l}) \in \mathcal{D}_{2}$, we have
\begin{equation}
S > k_{l+1} \ \ , \ \ S \geq \sum_{j=0}^{l+1} k_{j} \ \ , \ \ S \geq \sum_{j=0}^{l} p_{j}
\end{equation}
Let $\mathcal{D}_{3}$ be a finite subset of $\mathbb{N} \setminus \{0,1 \}$. Let $M^{0},\bar{X}_{0}>0$ be given real numbers and
let $\bar{Z}_{j}^{0}>M^{0}$, $0 \leq j \leq l$ be real numbers. Let
\begin{multline*}
d_{(k_{0},\ldots,k_{l+1})}(Z_{0},\ldots,Z_{l},X) \in G(\bar{Z}_{0}^{0},\ldots,\bar{Z}_{l}^{0},\bar{X}^{0}) \ \ , \ \ 
f_{(p_{0},\ldots,p_{l})}(Z_{0},\ldots,Z_{l}) \in G(\bar{Z}_{0}^{0},\ldots,\bar{Z}_{l}^{0},\bar{X}^{0}),\\
e_{m}(Z_{0},\ldots,Z_{l},X) \in G(\bar{Z}_{0}^{0},\ldots,\bar{Z}_{l}^{0},\bar{X}^{0})
\end{multline*}
for all $(k_{0},\ldots,k_{l+1}) \in \mathcal{D}_{1}$, all $(p_{0},\ldots,p_{l}) \in \mathcal{D}_{2}$ and
all $m \in \mathcal{D}_{3}$. For all $0 \leq j \leq S-1$, we also choose
$\varphi_{j}(Z_{0},\ldots,Z_{l}) \in G(\bar{Z}_{0}^{0},\ldots,\bar{Z}_{l}^{0},\bar{X}^{0})$.

We consider the following Cauchy problem
\begin{multline}
\partial_{X}^{S}U(Z_{0},\ldots,Z_{l},X) = \sum_{\underline{k} = (k_{0},\ldots,k_{l},k_{l+1}) \in \mathcal{D}_{1}}
d_{\underline{k}}(Z_{0},\ldots,Z_{l},X) \partial_{Z_0}^{k_0} \cdots \partial_{Z_l}^{k_l} \partial_{X}^{k_{l+1}}
U(Z_{0},\ldots,Z_{l},X) \\
+ \sum_{\underline{p} = (p_{0},\ldots,p_{l}) \in \mathcal{D}_{2}}
f_{\underline{p}}(Z_{0},\ldots,Z_{l}) \partial_{Z_0}^{p_0} \cdots \partial_{Z_l}^{p_l}
(\sum_{m \in \mathcal{D}_{3}} e_{m}(Z_{0},\ldots,Z_{l},X)(U(Z_{0},\ldots,Z_{l},X))^{m}) \label{CP_CK}
\end{multline}
for given initial data
\begin{equation}
(\partial_{X}^{j}U)(Z_{0},\ldots,Z_{l},0) = \varphi_{j}(Z_{0},\ldots,Z_{l}) \ \ , \ \ 0 \leq j \leq S-1. \label{CP_CK_init_cond}
\end{equation}
Then, for given real numbers $\bar{Z}_{j}^{1}>0$, $0 \leq j \leq l$, with $M^{0} < \bar{Z}_{j}^{1} < \bar{Z}_{j}^{0}$,
one can choose $0 < \bar{X}^{1} < \bar{X}^{0}$ and $\delta>0$ (which depend on $\bar{Z}_{j}^{1}$ for $0 \leq j \leq l$, on
$||d_{\underline{k}}(Z_{0},\ldots,Z_{l},X)||_{(\bar{Z}_{0}^{0},\ldots,\bar{Z}_{l}^{0},\bar{X}^{0})}$ for
$\underline{k} \in \mathcal{D}_{1}$, on
$||f_{\underline{p}}(Z_{0},\ldots,Z_{l})||_{(\bar{Z}_{0}^{0},\ldots,\bar{Z}_{l}^{0},\bar{X}^{0})}$ for
$\underline{p} \in \mathcal{D}_{2}$ and on
$||e_{m}(Z_{0},\ldots,Z_{l},X)||_{(\bar{Z}_{0}^{0},\ldots,\bar{Z}_{l}^{0},\bar{X}^{0})}$ for
$m \in \mathcal{D}_{3}$) such that
if
\begin{equation}
|| \varphi_{j}(Z_{0},\ldots,Z_{l})||_{(\bar{Z}_{0}^{0},\ldots,\bar{Z}_{l}^{0},\bar{X}^{0})} < \delta \ \ , \ \ 0 \leq j \leq S-1,
\label{init_cond_small}
\end{equation}
then the problem (\ref{CP_CK}), (\ref{CP_CK_init_cond}) has a unique solution
$U(Z_{0},\ldots,Z_{l},X) \in G(\bar{Z}_{0}^{1},\ldots,\bar{Z}_{l}^{1},\bar{X}^{1})$. Moreover, there exists a constant $C_{3}>0$,
depending on $\bar{X}^{0},\bar{X}^{1}$,$\bar{Z}_{j}^{0},\bar{Z}_{j}^{1}$ for $0 \leq j \leq l$, on\\
$||d_{\underline{k}}(Z_{0},\ldots,Z_{l},X)||_{(\bar{Z}_{0}^{0},\ldots,\bar{Z}_{l}^{0},\bar{X}^{0})}$ for
$\underline{k} \in \mathcal{D}_{1}$, on
$||f_{\underline{p}}(Z_{0},\ldots,Z_{l})||_{(\bar{Z}_{0}^{0},\ldots,\bar{Z}_{l}^{0},\bar{X}^{0})}$ for
$\underline{p} \in \mathcal{D}_{2}$ and on
$||e_{m}(Z_{0},\ldots,Z_{l},X)||_{(\bar{Z}_{0}^{0},\ldots,\bar{Z}_{l}^{0},\bar{X}^{0})}$ for
$m \in \mathcal{D}_{3}$, such that
\begin{equation}
||U(Z_{0},\ldots,Z_{l},X)||_{(\bar{Z}_{0}^{1},\ldots,\bar{Z}_{l}^{1},\bar{X}^{1})} \leq \delta C_{3}. \label{estim_sol_CK} 
\end{equation}
\end{prop}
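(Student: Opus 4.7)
The plan is to recast the Cauchy problem (\ref{CP_CK}), (\ref{CP_CK_init_cond}) as the fixed-point equation $U=\mathcal{F}(U)$, where
$$\mathcal{F}(U)=\sum_{j=0}^{S-1}\varphi_{j}(Z_{0},\ldots,Z_{l})\frac{X^{j}}{j!} + \partial_{X}^{-S}\bigl(\mathcal{L}(U)+\mathcal{N}(U)\bigr),$$
with $\mathcal{L}(U)$ and $\mathcal{N}(U)$ denoting the linear and nonlinear pieces of the right-hand side of (\ref{CP_CK}), and apply Banach's fixed-point theorem in the closed ball $B(0,\rho)\subset\mathcal{H}_{1}:=G(\bar{Z}_{0}^{1},\ldots,\bar{Z}_{l}^{1},\bar{X}^{1})$ of radius $\rho=\delta C_{3}$. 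The initial-data part is immediate: since $\bar{Z}_{j}^{1}<\bar{Z}_{j}^{0}$ and $\bar{X}^{1}<\bar{X}^{0}$ yield a continuous embedding $\mathcal{H}_{0}\hookrightarrow\mathcal{H}_{1}$ with $\|\cdot\|_{\mathcal{H}_{1}}\leq\|\cdot\|_{\mathcal{H}_{0}}$, one has $\|\sum_{j=0}^{S-1}\varphi_{j}X^{j}/j!\|_{\mathcal{H}_{1}}\leq\delta\sum_{j=0}^{S-1}(\bar{X}^{1})^{j}/j!$.

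The core of the proof is bounding each linear summand $\partial_{X}^{-S}\bigl[d_{\underline{k}}\partial_{Z_{0}}^{k_{0}}\cdots\partial_{Z_{l}}^{k_{l}}\partial_{X}^{k_{l+1}}U\bigr]$ in $\mathcal{H}_{1}$. I would use the Leibniz identity to write $d_{\underline{k}}\partial^{\underline{k}}U$ as $\partial^{\underline{k}}(d_{\underline{k}}U)$ minus a combination of terms $c_{j}\,(\partial_{X}^{j}d_{\underline{k}})\cdot\partial^{\underline{k}-j\mathbf{e}_{l+1}}U$ with $j\geq 1$, apply $\partial_{X}^{-S}$, and then commute the $Z$-derivatives through the $X$-antiderivative to reduce every summand to an operator of the form $\partial_{X}^{-(S-k_{l+1}+j)}\partial_{Z_{0}}^{k_{0}}\cdots\partial_{Z_{l}}^{k_{l}}\bigl[(\partial_{X}^{j}d_{\underline{k}})\cdot U\bigr]$. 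Lemma 1 bounds this combined operator on $\mathcal{H}_{1}$, its balance condition $S-k_{l+1}+j\geq\sum_{i=0}^{l}k_{i}$ being ensured by the hypothesis $S\geq\sum_{i=0}^{l+1}k_{i}$; Lemma 3 (Banach algebra) controls the product; and Lemma 2 provides $\|\partial_{X}^{j}d_{\underline{k}}\|_{\mathcal{H}_{1}}\leq C_{2}(\bar{X}^{1})^{-j}\|d_{\underline{k}}\|_{\mathcal{H}_{0}}$. After the $(\bar{X}^{1})^{j}$ factor coming from Lemma 1 cancels the $(\bar{X}^{1})^{-j}$ factor coming from Lemma 2, one is left with
$$\|\partial_{X}^{-S}[d_{\underline{k}}\partial^{\underline{k}}U]\|_{\mathcal{H}_{1}}\leq K_{\underline{k}}\,(\bar{X}^{1})^{S-k_{l+1}}\,\|U\|_{\mathcal{H}_{1}},$$
and the strict inequality $S>k_{l+1}$ makes this factor arbitrarily small when $\bar{X}^{1}\to 0^{+}$.

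The nonlinear contributions $\partial_{X}^{-S}\bigl[f_{\underline{p}}\partial_{Z_{0}}^{p_{0}}\cdots\partial_{Z_{l}}^{p_{l}}(\sum_{m}e_{m}U^{m})\bigr]$ are handled in the same spirit: iterated application of Lemma 3 yields $\|e_{m}U^{m}\|_{\mathcal{H}_{1}}\leq\|e_{m}\|_{\mathcal{H}_{0}}\|U\|_{\mathcal{H}_{1}}^{m}$; the combined operator $\partial_{X}^{-S}\partial_{Z_{0}}^{p_{0}}\cdots\partial_{Z_{l}}^{p_{l}}$ is bounded by Lemma 1 thanks to the second hypothesis $S\geq\sum_{j=0}^{l}p_{j}$; and the multiplier $f_{\underline{p}}$ is absorbed by the same Leibniz-type manipulation as above. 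Collecting all estimates,
$$\|\mathcal{F}(U)\|_{\mathcal{H}_{1}}\leq C_{\Phi}\delta + A(\bar{X}^{1})\|U\|_{\mathcal{H}_{1}} + B(\bar{X}^{1})\sum_{m\in\mathcal{D}_{3}}\|U\|_{\mathcal{H}_{1}}^{m},$$
with $A(\bar{X}^{1}),B(\bar{X}^{1})\to 0$ as $\bar{X}^{1}\to 0^{+}$. Choosing $\bar{X}^{1}$ small enough that $A(\bar{X}^{1})\leq 1/2$ and then $\delta$ small enough makes $\mathcal{F}$ send $B(0,\rho)$ into itself for $\rho=\delta C_{3}$. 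The analogous Lipschitz estimate for $\mathcal{F}(U_{1})-\mathcal{F}(U_{2})$, obtained using the factorization $U_{1}^{m}-U_{2}^{m}=(U_{1}-U_{2})\sum_{i=0}^{m-1}U_{1}^{i}U_{2}^{m-1-i}$, yields contraction, and Banach's theorem produces the unique fixed point together with the bound (\ref{estim_sol_CK}).

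The main obstacle is the bookkeeping in the linear estimate: one must verify that the negative powers of $\bar{X}^{1}$ produced by Lemma 2 (when moving $X$-derivatives of $d_{\underline{k}}$ from $\mathcal{H}_{0}$ into $\mathcal{H}_{1}$) are exactly compensated by the positive powers of $\bar{X}^{1}$ coming from Lemma 1 (the antiderivative $\partial_{X}^{-S}$), so that a strictly positive net power $(\bar{X}^{1})^{S-k_{l+1}}$ survives and the linear operator becomes a contraction for small $\bar{X}^{1}$. The precise hypotheses $S>k_{l+1}$, $S\geq\sum_{i=0}^{l+1}k_{i}$ and $S\geq\sum_{j=0}^{l}p_{j}$ are exactly what is needed for this cancellation to produce a non-trivial positive gain.
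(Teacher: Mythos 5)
Your overall strategy --- Banach fixed point in $G(\bar{Z}_{0}^{1},\ldots,\bar{Z}_{l}^{1},\bar{X}^{1})$ driven by Lemmas 1, 2 and 3, with the factor $(\bar{X}^{1})^{S-k_{l+1}}$ supplying the contraction and the hypotheses $S>k_{l+1}$, $S\geq\sum_{j}k_{j}$, $S\geq\sum_{j}p_{j}$ playing exactly the roles you assign them --- is the paper's. The one structural difference is that you iterate on $U$ itself, writing $U=w+\partial_{X}^{-S}(\mathcal{L}(U)+\mathcal{N}(U))$, whereas the paper iterates on $H=\partial_{X}^{S}U$ and only recovers $U=\partial_{X}^{-S}H+w$ at the end. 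This difference is not cosmetic: it is precisely what forces you into the Leibniz manipulation on $d_{\underline{k}}$, and that step is where your argument has a gap.

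Concretely, in your formulation you must estimate $\partial_{X}^{-S}\bigl[d_{\underline{k}}\,\partial_{Z_{0}}^{k_{0}}\cdots\partial_{Z_{l}}^{k_{l}}\partial_{X}^{k_{l+1}}U\bigr]$, and since $d_{\underline{k}}$ depends on $X$ the antiderivative does not pass through the product. The identity you invoke produces only correction terms $(\partial_{X}^{j}d_{\underline{k}})\,\partial^{\underline{k}-j\mathbf{e}_{l+1}}U$, but the genuine Leibniz expansion of $\partial^{\underline{k}}(d_{\underline{k}}U)$ also contains $Z$-derivatives of $d_{\underline{k}}$, and every such correction term is again a product of a coefficient with a positive-order derivative of the unknown $U$ --- which cannot be bounded in $\mathcal{H}_{1}$ by $\|U\|_{\mathcal{H}_{1}}$, since Lemma 2 only maps the larger-radius space into the strictly smaller one. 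To make your route rigorous you would need the full adjoint identity $d\,\partial^{\underline{k}}U=\sum_{\underline{j}\leq\underline{k}}(-1)^{|\underline{j}|}\binom{\underline{k}}{\underline{j}}\,\partial^{\underline{k}-\underline{j}}\bigl[(\partial^{\underline{j}}d)U\bigr]$, which places all derivatives outside the product, and you would additionally have to handle the fact that $\partial_{X}^{-S}\partial_{X}^{m}\neq\partial_{X}^{m-S}$ on formal series (the left-hand side annihilates the Taylor polynomial of degree $<m$ in $X$), so the operator $\partial_{X}^{-(S-k_{l+1}+j)}$ you write down only appears modulo polynomial corrections. All of this is repairable, but it is exactly the bookkeeping the paper sidesteps by taking $H=\partial_{X}^{S}U$ as the unknown: then the multiplication by $d_{\underline{k}}$ sits outside the integro-differential operator $\partial_{Z_{0}}^{k_{0}}\cdots\partial_{Z_{l}}^{k_{l}}\partial_{X}^{k_{l+1}-S}$, Lemma 3 handles the product and Lemma 1 the operator directly, and Lemma 2 is needed only on the fixed data $\varphi_{j}$ and $e_{m}$, never on the unknown. (Your treatment of the nonlinear term, by contrast, is sound: $f_{\underline{p}}$ is independent of $X$ and hence commutes with $\partial_{X}^{-S}$, so Lemma 1 applies to the composite $\partial_{Z_{0}}^{p_{0}}\cdots\partial_{Z_{l}}^{p_{l}}\partial_{X}^{-S}$ acting on the single element $e_{m}U^{m}$.)
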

\begin{proof} We put
$$ w(Z_{0},\ldots,Z_{l},X) = \sum_{j=0}^{S-1} \varphi_{j}(Z_{0},\ldots,Z_{l}) \frac{X^j}{j!}$$
and we consider the map $A : \mathbb{C}[[Z_{0},\ldots,Z_{l},X]] \rightarrow \mathbb{C}[[Z_{0},\ldots,Z_{l},X]]$ defined as
\begin{multline*}
A(H(Z_{0},\ldots,Z_{l},X)) = \sum_{\underline{k} = (k_{0},\ldots,k_{l},k_{l+1}) \in \mathcal{D}_{1}}
d_{\underline{k}}(Z_{0},\ldots,Z_{l},X) \partial_{Z_0}^{k_0} \cdots \partial_{Z_l}^{k_l}\partial_{X}^{k_{l+1}-S}
H(Z_{0},\ldots,Z_{l},X) \\
+ \sum_{\underline{k} = (k_{0},\ldots,k_{l},k_{l+1}) \in \mathcal{D}_{1}}
d_{\underline{k}}(Z_{0},\ldots,Z_{l},X) \partial_{Z_0}^{k_0}\cdots \partial_{Z_l}^{k_l}\partial_{X}^{k_{l+1}}w(Z_{0},\ldots,Z_{l},X)\\
+ \sum_{\underline{p} = (p_{0},\ldots,p_{l}) \in \mathcal{D}_{2}} 
f_{\underline{p}}(Z_{0},\ldots,Z_{l}) \partial_{Z_0}^{p_0} \cdots \partial_{Z_l}^{p_l}
(\sum_{m \in \mathcal{D}_{3}} e_{m}(Z_{0},\ldots,Z_{l},X)\\
\times ( \partial_{X}^{-S}H(Z_{0},\ldots,Z_{l},X) + w(Z_{0},\ldots,Z_{l},X) )^{m})
\end{multline*}

\begin{lemma} Let $\bar{Z}_{j}^{1}>0$, $0 \leq j \leq l$, be real numbers such that $M^{0} < \bar{Z}_{j}^{1} < Z_{j}^{0}$. Then,
there exist $\delta>0$, a real number $0 < \bar{X}^{1} < \bar{X}^{0}$ and a constant $K_{4}>0$
(depending on $\bar{Z}_{j}^{1}$ for $0 \leq j \leq l$, on
$||d_{\underline{k}}(Z_{0},\ldots,Z_{l},X)||_{(\bar{Z}_{0}^{0},\ldots,\bar{Z}_{l}^{0},\bar{X}^{0})}$ for
$\underline{k} \in \mathcal{D}_{1}$, on
$||f_{\underline{p}}(Z_{0},\ldots,Z_{l})||_{(\bar{Z}_{0}^{0},\ldots,\bar{Z}_{l}^{0},\bar{X}^{0})}$ for
$\underline{p} \in \mathcal{D}_{2}$ and on
$||e_{m}(Z_{0},\ldots,Z_{l},X)||_{(\bar{Z}_{0}^{0},\ldots,\bar{Z}_{l}^{0},\bar{X}^{0})}$ for
$m \in \mathcal{D}_{3}$), such that if one puts
$R=K_{4}\delta$,\\
{\bf i)} we have
\begin{equation}
A(B_{R}) \subset B_{R} \label{ABR_sub_BR}
\end{equation}
where $B_{R}$ is the closed ball of radius $R$, centered at 0 in
$G(\bar{Z}_{0}^{1},\ldots,\bar{Z}_{l}^{1},\bar{X}^{1})$.\\
{\bf ii)} For all $H_{1},H_{2} \in B_{R}$,
\begin{equation}
||A(H_{1}) - A(H_{2})||_{(\bar{Z}_{0}^{1},\ldots,\bar{Z}_{l}^{1},\bar{X}^{1})} \leq \frac{1}{2}
||H_{1} - H_{2}||_{(\bar{Z}_{0}^{1},\ldots,\bar{Z}_{l}^{1},\bar{X}^{1})}. \label{A_shrink}
\end{equation}
\end{lemma}
\begin{proof} We first show {\bf i)}. We fix $\bar{Z}_{j}^{1}>0$, $0 \leq j \leq l$, be real numbers such that
$M^{0} < \bar{Z}_{j}^{1} < Z_{j}^{0}$. We also consider $\delta>0$ for which (\ref{init_cond_small}) holds. Let $R>0$ be of the form
$R = K_{4}\delta$ for some constant $K_{4}>0$. We take $H \in B_{R}$ where
$B_{R}$ is the closed ball of radius $R$, centered at 0 in
$G(\bar{Z}_{0}^{1},\ldots,\bar{Z}_{l}^{1},\bar{X}^{1})$ for some real number
$0 < \bar{X}^{1} < \bar{X}^{0}$. From Lemma 1 and Lemma 3,
we get
\begin{multline}
|| d_{\underline{k}}(Z_{0},\ldots,Z_{l},X) \partial_{Z_0}^{k_0} \cdots \partial_{Z_l}^{k_l}\partial_{X}^{k_{l+1}-S}
H(Z_{0},\ldots,Z_{l},X) ||_{(\bar{Z}_{0}^{1},\ldots,\bar{Z}_{l}^{1},\bar{X}^{1})}\\
\leq || d_{\underline{k}}(Z_{0},\ldots,Z_{l},X) ||_{(\bar{Z}_{0}^{1},\ldots,\bar{Z}_{l}^{1},\bar{X}^{1})}
(\bar{Z}_{0}^{1})^{-k_0} \cdots (\bar{Z}_{l}^{1})^{-k_l} (\bar{X}^{1})^{S-k_{l+1}}\\
\times ||H(Z_{0},\ldots,Z_{l},X)||_{(\bar{Z}_{0}^{1},\ldots,\bar{Z}_{l}^{1},\bar{X}^{1})} \leq
|| d_{\underline{k}}(Z_{0},\ldots,Z_{l},X) ||_{(\bar{Z}_{0}^{0},\ldots,\bar{Z}_{l}^{0},\bar{X}^{0})}
(\bar{Z}_{0}^{1})^{-k_0} \cdots (\bar{Z}_{l}^{1})^{-k_l}\\
\times (\bar{X}^{1})^{S-k_{l+1}} R. \label{norm_d_int_diff_H<}
\end{multline}
Now, from Lemma 2 and Lemma 3, we get a constant $C_{4}>0$ (depending on $k_{0},\ldots,k_{l}$,
$\bar{X}^{0},\bar{X}^{1}$,$\bar{Z}_{j}^{0},\bar{Z}_{j}^{1}$, for $0 \leq j \leq l$) with
\begin{multline}
|| d_{\underline{k}}(Z_{0},\ldots,Z_{l},X) (\partial_{Z_0}^{k_0}\cdots\partial_{Z_l}^{k_l}\varphi_{j})(Z_{0},\ldots,Z_{l})\frac{X^q}{q!}||
_{(\bar{Z}_{0}^{1},\ldots,\bar{Z}_{l}^{1},\bar{X}^{1})}\\
\leq  || d_{\underline{k}}(Z_{0},\ldots,Z_{l},X) ||_{(\bar{Z}_{0}^{1},\ldots,\bar{Z}_{l}^{1},\bar{X}^{1})}
||\frac{X^q}{q!}||_{(\bar{Z}_{0}^{1},\ldots,\bar{Z}_{l}^{1},\bar{X}^{1})}C_{4}
(\bar{Z}_{0}^{1})^{-k_0} \cdots (\bar{Z}_{l}^{1})^{-k_l}\\
\times ||\varphi_{j}(Z_{0},\ldots,Z_{l})||_{(\bar{Z}_{0}^{0},\ldots,\bar{Z}_{l}^{0},\bar{X}^{0})} \leq
|| d_{\underline{k}}(Z_{0},\ldots,Z_{l},X) ||_{(\bar{Z}_{0}^{0},\ldots,\bar{Z}_{l}^{0},\bar{X}^{0})}
||\frac{X^q}{q!}||_{(\bar{Z}_{0}^{0},\ldots,\bar{Z}_{l}^{0},\bar{X}^{0})}\\
\times C_{4}(\bar{Z}_{0}^{1})^{-k_0} \cdots (\bar{Z}_{l}^{1})^{-k_l}\delta \label{norm_d_diff_varphi<}
\end{multline}
for all $0 \leq j,q \leq S-1$.

Using Leibniz formula, we deduce from Lemma 3, that
\begin{multline}
|| f_{\underline{p}}(Z_{0},\ldots,Z_{p}) \partial_{Z_0}^{p_0} \cdots \partial_{Z_l}^{p_l} \left( e_{m}(Z_{0},\ldots,Z_{l},X) \right. \\
\left. \times 
(\partial_{X}^{-S}H(Z_{0},\ldots,Z_{l},X) + w(Z_{0},\ldots,Z_{l},X) )^{m} \right) ||_{(\bar{Z}_{0}^{1},\ldots,\bar{Z}_{l}^{1},\bar{X}^{1})}\\
 \leq \sum_{\sum_{j=0}^{m} p_{h,j} = p_{h}, 0 \leq h \leq l} \frac{ p_{0}! \cdots p_{l}! }{ \Pi_{0 \leq j \leq m, 0 \leq h \leq l} p_{h,j}! }
|| f_{\underline{p}}(Z_{0},\ldots,Z_{p}) ||_{(\bar{Z}_{0}^{1},\ldots,\bar{Z}_{l}^{1},\bar{X}^{1})}\\
\times ||\partial_{Z_0}^{p_{0,0}} \cdots
\partial_{Z_l}^{p_{l,0}}e_{m}(Z_{0},\ldots,Z_{l},X)||_{(\bar{Z}_{0}^{1},\ldots,\bar{Z}_{l}^{1},\bar{X}^{1})}\\
\times \Pi_{j=1}^{m}||(\partial_{Z_0}^{p_{0,j}} \cdots \partial_{Z_l}^{p_{l,j}}) \left( \partial_{X}^{-S}H(Z_{0},\ldots,Z_{l},X) +
w(Z_{0},\ldots,Z_{l},X) \right)||_{(\bar{Z}_{0}^{1},\ldots,\bar{Z}_{l}^{1},\bar{X}^{1})}
\label{norm_fp_diff_em_H_w}
\end{multline}
By Lemma 2, one also gets a constant $C_{4.1}>0$ (depending on
$p_{0,0},\ldots,p_{l,0}$,$\bar{X}^{0},\bar{X}^{1}$,$\bar{Z}_{j}^{0},\bar{Z}_{j}^{1}$ for $0 \leq j \leq l$) with
\begin{multline}
||\partial_{Z_0}^{p_{0,0}} \cdots
\partial_{Z_l}^{p_{l,0}}e_{m}(Z_{0},\ldots,Z_{l},X)||_{(\bar{Z}_{0}^{1},\ldots,\bar{Z}_{l}^{1},\bar{X}^{1})}\\
 \leq
C_{4.1} (\bar{Z}_{0}^{1})^{-p_{0,0}} \cdots (\bar{Z}_{l}^{1})^{-p_{l,0}}
||e_{m}(Z_{0},\ldots,Z_{l},X)||_{(\bar{Z}_{0}^{0},\ldots,\bar{Z}_{l}^{0},\bar{X}^{0})} \label{norm_diff_em}
\end{multline}
By Lemma 1, one finds
\begin{multline}
||\partial_{Z_0}^{p_{0,j}} \cdots \partial_{Z_l}^{p_{l,j}}\partial_{X}^{-S}
H(Z_{0},\ldots,Z_{l},X)||_{(\bar{Z}_{0}^{1},\ldots,\bar{Z}_{l}^{1},\bar{X}^{1})}\\
\leq (\bar{Z}_{0}^{1})^{-p_{0,j}} \cdots (\bar{Z}_{l}^{1})^{-p_{l,j}}(\bar{X}^{1})^{S}
||H(Z_{0},\ldots,Z_{l},X)||_{(\bar{Z}_{0}^{1},\ldots,\bar{Z}_{l}^{1},\bar{X}^{1})}\\
\leq (\bar{Z}_{0}^{1})^{-p_{0,j}} \cdots (\bar{Z}_{l}^{1})^{-p_{l,j}}(\bar{X}^{1})^{S}R \label{norm_diff_int_H}
\end{multline}
Due to Lemma 2, we get a constant $C_{4.2}>0$ (depending on
$p_{0,j},\ldots,p_{l,j}$,$\bar{X}^{0},\bar{X}^{1}$,$\bar{Z}_{m}^{0},\bar{Z}_{m}^{1}$ for $0 \leq m \leq l$) such that
\begin{multline}
||\partial_{Z_0}^{p_{0,j}} \cdots \partial_{Z_l}^{p_{l,j}}
w(Z_{0},\ldots,Z_{l},X)||_{(\bar{Z}_{0}^{1},\ldots,\bar{Z}_{l}^{1},\bar{X}^{1})}\\
\leq \sum_{q=0}^{S-1} ||\partial_{Z_0}^{p_{0,j}} \cdots \partial_{Z_l}^{p_{l,j}}
\varphi_{q}(Z_{0},\ldots,Z_{l})||_{(\bar{Z}_{0}^{1},\ldots,\bar{Z}_{l}^{1},\bar{X}^{1})}
|| \frac{X^q}{q!} ||_{(\bar{Z}_{0}^{1},\ldots,\bar{Z}_{l}^{1},\bar{X}^{1})}\\
\leq C_{4.2} \sum_{q=0}^{S-1} (\bar{Z}_{0}^{1})^{-p_{0,j}} \cdots (\bar{Z}_{l}^{1})^{-p_{l,j}}
||\varphi_{q}(Z_{0},\ldots,Z_{l})||_{(\bar{Z}_{0}^{0},\ldots,\bar{Z}_{l}^{0},\bar{X}^{0})}
|| \frac{X^q}{q!} ||_{(\bar{Z}_{0}^{0},\ldots,\bar{Z}_{l}^{0},\bar{X}^{0})} \\
\leq C_{4.2} \sum_{q=0}^{S-1} (\bar{Z}_{0}^{1})^{-p_{0,j}} \cdots (\bar{Z}_{l}^{1})^{-p_{l,j}}
\delta || \frac{X^q}{q!} ||_{(\bar{Z}_{0}^{0},\ldots,\bar{Z}_{l}^{0},\bar{X}^{0})} \label{norm_diff_w}
\end{multline}

Now, we can choose $0 < \bar{X}^{1} < \bar{X}^{0}$, $\delta>0$ and the constant $K_{4}>0$ (recall that $R=K_{4}\delta$) in such a way
that
\begin{multline}
\sum_{\underline{k}=(k_{0},\ldots,k_{l+1}) \in \mathcal{D}_{1}}
||d_{\underline{k}}(Z_{0},\ldots,Z_{l},X) ||_{(\bar{Z}_{0}^{0},\ldots,\bar{Z}_{l}^{0},\bar{X}^{0})}
(\bar{Z}_{0}^{1})^{-k_0} \cdots (\bar{Z}_{l}^{1})^{-k_l}\\
\times \left( (\bar{X}^{1})^{S-k_{l+1}}K_{4}\delta +
\sum_{q=0}^{S-1} ||\frac{X^q}{q!}||_{(\bar{Z}_{0}^{0},\ldots,\bar{Z}_{l}^{0},\bar{X}^{0})}C_{4}\delta \right)\\
+ \sum_{\underline{p}=(p_{0},\ldots,p_{l}) \in \mathcal{D}_{2}} \sum_{m \in \mathcal{D}_{3}}
\sum_{\sum_{j=0}^{m} p_{h,j} = p_{h}, 0 \leq h \leq l} \frac{ p_{0}! \cdots p_{l}! }{ \Pi_{0 \leq j \leq m, 0 \leq h \leq l} p_{h,j}! }
|| f_{\underline{p}}(Z_{0},\ldots,Z_{l}) ||_{(\bar{Z}_{0}^{0},\ldots,\bar{Z}_{l}^{0},\bar{X}^{0})}\\
\times C_{4.1} (\bar{Z}_{0}^{1})^{-p_{0,0}} \cdots (\bar{Z}_{l}^{1})^{-p_{l,0}}
||e_{m}(Z_{0},\ldots,Z_{l},X)||_{(\bar{Z}_{0}^{0},\ldots,\bar{Z}_{l}^{0},\bar{X}^{0})}\\
\times \Pi_{j=1}^{m} \left(  (\bar{Z}_{0}^{1})^{-p_{0,j}} \cdots (\bar{Z}_{l}^{1})^{-p_{l,j}}(\bar{X}^{1})^{S}K_{4}\delta \right. \\
+ C_{4.2} \sum_{q=0}^{S-1} (\bar{Z}_{0}^{1})^{-p_{0,j}} \cdots (\bar{Z}_{l}^{1})^{-p_{l,j}}
\left. \delta || \frac{X^q}{q!} ||_{(\bar{Z}_{0}^{0},\ldots,\bar{Z}_{l}^{0},\bar{X}^{0})} \right) \leq K_{4}\delta
\label{stable_delta_K4_X}
\end{multline}

Hence, gathering (\ref{norm_d_int_diff_H<}), (\ref{norm_d_diff_varphi<}), (\ref{norm_fp_diff_em_H_w}), (\ref{norm_diff_em}),
(\ref{norm_diff_int_H}), (\ref{norm_diff_w}) and (\ref{stable_delta_K4_X}) yields the inclusion (\ref{ABR_sub_BR}).\medskip

We turn to the proof of {\bf ii)}. As above, we fix $\bar{Z}_{j}^{1}>0$, $0 \leq j \leq l$, be real numbers such that
$M^{0} < \bar{Z}_{j}^{1} < Z_{j}^{0}$. We also consider $\delta>0$ for which (\ref{init_cond_small}) holds. Let $R>0$ be of the form
$R = K_{4}\delta$ for some constant $K_{4}>0$. We take $H_{1},H_{2} \in B_{R}$ where
$B_{R}$ is the closed ball of radius $R$, centered at 0 in
$G(\bar{Z}_{0}^{1},\ldots,\bar{Z}_{l}^{1},\bar{X}^{1})$ for some real number
$0 < \bar{X}^{1} < \bar{X}^{0}$. From Lemma 1 and Lemma 3, we get that
\begin{multline}
|| d_{\underline{k}}(Z_{0},\ldots,Z_{l},X) \partial_{Z_0}^{k_0} \cdots \partial_{Z_l}^{k_l}\partial_{X}^{k_{l+1}-S}
(H_{2}(Z_{0},\ldots,Z_{l},X) - H_{1}(Z_{0},\ldots,Z_{l},X)) ||_{(\bar{Z}_{0}^{1},\ldots,\bar{Z}_{l}^{1},\bar{X}^{1})}\\
\leq || d_{\underline{k}}(Z_{0},\ldots,Z_{l},X) ||_{(\bar{Z}_{0}^{1},\ldots,\bar{Z}_{l}^{1},\bar{X}^{1})}
(\bar{Z}_{0}^{1})^{-k_0} \cdots (\bar{Z}_{l}^{1})^{-k_l} (\bar{X}^{1})^{S-k_{l+1}}\\
\times ||H_{2}(Z_{0},\ldots,Z_{l},X) - H_{1}(Z_{0},\ldots,Z_{l},X)||_{(\bar{Z}_{0}^{1},\ldots,\bar{Z}_{l}^{1},\bar{X}^{1})}\\
 \leq || d_{\underline{k}}(Z_{0},\ldots,Z_{l},X) ||_{(\bar{Z}_{0}^{0},\ldots,\bar{Z}_{l}^{0},\bar{X}^{0})}
(\bar{Z}_{0}^{1})^{-k_0} \cdots (\bar{Z}_{l}^{1})^{-k_l} (\bar{X}^{1})^{S-k_{l+1}}\\
\times ||H_{2}(Z_{0},\ldots,Z_{l},X) - H_{1}(Z_{0},\ldots,Z_{l},X)||_{(\bar{Z}_{0}^{1},\ldots,\bar{Z}_{l}^{1},\bar{X}^{1})}.
 \label{norm_d_int_diff_H1_minus_H2<}
\end{multline}
Using the identity $b^{m} - a^{m} = (b-a)\sum_{s=0}^{m-1} a^{s}b^{m-1-s}$ for any complex numbers $a,b$ and any integer $m \geq 2$,
we can write
\begin{multline}
(\partial_{X}^{-S}H_{1}(Z_{0},\ldots,Z_{l},X) + w(Z_{0},\ldots,Z_{l},X))^{m} \\
- \partial_{X}^{-S}H_{2}(Z_{0},\ldots,Z_{l},X) + w(Z_{0},\ldots,Z_{l},X))^{m}
= (\partial_{X}^{-S}H_{1} - \partial_{X}^{-S}H_{2}) \\
 \times \sum_{s=0}^{m-1} (\partial_{X}^{-S}H_{2}+w)^{s}
(\partial_{X}^{-S}H_{1}+w)^{m-1-s} \label{factor_int_H1_w_minus_int_H2_w}
\end{multline}
Using Leibniz formula, we deduce from Lemma 3 that
\begin{multline}
||f_{\underline{p}}(Z_{0},\ldots,Z_{l}) \partial_{Z_0}^{p_0} \cdots
\partial_{Z_l}^{p_l}(e_{m}(Z_{0},\ldots,Z_{l},X)(\partial_{X}^{-S}H_{1}-
\partial_{X}^{-S}H_{2})\\
\times (\sum_{s=0}^{m-1}(\partial_{X}^{-S}H_{2}+w)^{s}
(\partial_{X}^{-S}H_{1}+w)^{m-1-s})||_{(\bar{Z}_{0}^{1},\ldots,\bar{Z}_{l}^{1},\bar{X}^{1})}\\
\leq \sum_{\sum_{j=0}^{2} p_{h,j} = p_{h}, 0 \leq h \leq l}  \frac{ p_{0}! \cdots p_{l}! }{ \Pi_{0 \leq j \leq 2, 0 \leq h \leq l} p_{h,j}! }
|| f_{\underline{p}}(Z_{0},\ldots,Z_{l}) ||_{(\bar{Z}_{0}^{1},\ldots,\bar{Z}_{l}^{1},\bar{X}^{1})}\\
\times ||\partial_{Z_0}^{p_{0,0}} \cdots
\partial_{Z_l}^{p_{l,0}}e_{m}(Z_{0},\ldots,Z_{l},X)||_{(\bar{Z}_{0}^{1},\ldots,\bar{Z}_{l}^{1},\bar{X}^{1})}
\times ||\partial_{Z_0}^{p_{0,1}} \cdots \partial_{Z_l}^{p_{l,1}}( \partial_{X}^{-S}H_{1} -
\partial_{X}^{-S}H_{2} )||_{(\bar{Z}_{0}^{1},\ldots,\bar{Z}_{l}^{1},\bar{X}^{1})}\\
\times ||\partial_{Z_0}^{p_{0,2}} \cdots \partial_{Z_l}^{p_{l,2}}( \sum_{s=0}^{m-1} (\partial_{X}^{-S}H_{2}+w)^{s}
(\partial_{X}^{-S}H_{1}+w)^{m-1-s})||_{(\bar{Z}_{0}^{1},\ldots,\bar{Z}_{l}^{1},\bar{X}^{1})}
\label{norm_diff_em_H1_minus_H2}
\end{multline}
By Lemma 2, one also gets a constant $C_{4.3}>0$ (depending on
$p_{0,0},\ldots,p_{l,0}$,$\bar{X}^{0},\bar{X}^{1}$,$\bar{Z}_{j}^{0},\bar{Z}_{j}^{1}$ for $0 \leq j \leq l$) with
\begin{multline}
||\partial_{Z_0}^{p_{0,0}} \cdots
\partial_{Z_l}^{p_{l,0}}e_{m}(Z_{0},\ldots,Z_{l},X)||_{(\bar{Z}_{0}^{1},\ldots,\bar{Z}_{l}^{1},\bar{X}^{1})}\\
 \leq
C_{4.3} (\bar{Z}_{0}^{1})^{-p_{0,0}} \cdots (\bar{Z}_{l}^{1})^{-p_{l,0}}
||e_{m}(Z_{0},\ldots,Z_{l},X)||_{(\bar{Z}_{0}^{0},\ldots,\bar{Z}_{l}^{0},\bar{X}^{0})} \label{norm_diff_em_part2}
\end{multline}
By Lemma 1, one finds
\begin{multline}
||\partial_{Z_0}^{p_{0,1}} \cdots \partial_{Z_l}^{p_{l,1}}(\partial_{X}^{-S}
H_{1} - \partial_{X}^{-S}H_{2})||_{(\bar{Z}_{0}^{1},\ldots,\bar{Z}_{l}^{1},\bar{X}^{1})}\\
\leq (\bar{Z}_{0}^{1})^{-p_{0,1}} \cdots (\bar{Z}_{l}^{1})^{-p_{l,1}}(\bar{X}^{1})^{S}
||H_{1}-H_{2}||_{(\bar{Z}_{0}^{1},\ldots,\bar{Z}_{l}^{1},\bar{X}^{1})} \label{norm_diff_int_H1_minus_H2}
\end{multline}
Using again Leibniz formula and Lemma 3, we can write
\begin{multline}
||\partial_{Z_0}^{p_{0,2}} \cdots \partial_{Z_l}^{p_{l,2}}((\partial_{X}^{-S}H_{2}+w)^{s}
(\partial_{X}^{-S}H_{1}+w)^{m-1-s})||_{(\bar{Z}_{0}^{1},\ldots,\bar{Z}_{l}^{1},\bar{X}^{1})}\\
\leq \sum_{\sum_{j=1}^{m-1} p_{h,2,j} = p_{h,2} , 0 \leq h \leq l}
\frac{ p_{0,2}! \cdots p_{l,2}! }{ \Pi_{1 \leq j \leq m-1,0 \leq h \leq l} p_{h,2,j}! } \\
\times \Pi_{j=1}^{s} ||(\partial_{Z_0}^{p_{0,2,j}} \cdots \partial_{Z_l}^{p_{l,0,j}})(\partial_{X}^{-S}H_{2}
+ w)||_{(\bar{Z}_{0}^{1},\ldots,\bar{Z}_{l}^{1},\bar{X}^{1})}\\
\times \Pi_{j=s+1}^{m-1} ||(\partial_{Z_0}^{p_{0,2,j}} \cdots \partial_{Z_l}^{p_{l,0,j}})(\partial_{X}^{-S}H_{1}
+ w)||_{(\bar{Z}_{0}^{1},\ldots,\bar{Z}_{l}^{1},\bar{X}^{1})}
\label{norm_diff_power_H1_H2_w}
\end{multline}
By Lemma 1 and Lemma 2, one finds a constant $C_{4.4}>0$ (depending on
$p_{0,2,j},\ldots,p_{l,2,j}$,$\bar{X}^{0},\bar{X}^{1}$,\\$\bar{Z}_{j}^{0},\bar{Z}_{j}^{1}$ for $0 \leq j \leq l$) such that
\begin{multline}
||(\partial_{Z_0}^{p_{0,2,j}} \cdots \partial_{Z_l}^{p_{l,2,j}})(\partial_{X}^{-S}H_{r}
+ w)||_{(\bar{Z}_{0}^{1},\ldots,\bar{Z}_{l}^{1},\bar{X}^{1})}
\leq (\bar{Z}_{0}^{1})^{-p_{0,2,j}} \cdots (\bar{Z}_{l}^{1})^{-p_{l,2,j}}(\bar{X}^{1})^{S}R \\
+ C_{4.4} \sum_{q=0}^{S-1} (\bar{Z}_{0}^{1})^{-p_{0,2,j}} \cdots (\bar{Z}_{l}^{1})^{-p_{l,2,j}} \delta
||\frac{X^q}{q!}||_{(\bar{Z}_{0}^{0},\ldots,\bar{Z}_{l}^{0},\bar{X}^{0})}
\label{norm_diff_Hr_w}
\end{multline}
for $r=1,2$.

In the following, we choose $0 < \bar{X}^{1} < \bar{X}^{0}$ in order that
\begin{multline}
\sum_{\underline{k} = (k_{0},\ldots,k_{l+1}) \in \mathcal{D}_{1}}
|| d_{\underline{k}}(Z_{0},\ldots,Z_{l},X) ||_{(\bar{Z}_{0}^{0},\ldots,\bar{Z}_{l}^{0},\bar{X}^{0})}
(\bar{Z}_{0}^{1})^{-k_0} \cdots (\bar{Z}_{l}^{1})^{-k_l} (\bar{X}^{1})^{S-k_{l+1}}\\
+ \sum_{\underline{p}=(p_{0},\ldots,p_{l}) \in \mathcal{D}_{2}} \sum_{m \in \mathcal{D}_{3}}
 \sum_{\sum_{j=0}^{2} p_{h,j} = p_{h}, 0 \leq h \leq l}  \frac{ p_{0}! \cdots p_{l}! }{ \Pi_{0 \leq j \leq 2, 0 \leq h \leq l} p_{h,j}! }
|| f_{\underline{p}}(Z_{0},\ldots,Z_{l}) ||_{(\bar{Z}_{0}^{0},\ldots,\bar{Z}_{l}^{0},\bar{X}^{0})}\\
\times C_{4.3} (\bar{Z}_{0}^{1})^{-p_{0,0}} \cdots (\bar{Z}_{l}^{1})^{-p_{l,0}}
||e_{m}(Z_{0},\ldots,Z_{l},X)||_{(\bar{Z}_{0}^{0},\ldots,\bar{Z}_{l}^{0},\bar{X}^{0})} \times
(\bar{Z}_{0}^{1})^{-p_{0,1}} \cdots (\bar{Z}_{l}^{1})^{-p_{l,1}} (\bar{X}^{1})^{S} \\
\times \left( \sum_{s=0}^{m-1} \sum_{\sum_{j=1}^{m-1} p_{h,2,j} = p_{h,2} , 0 \leq h \leq l} \right.
\frac{ p_{0,2}! \cdots p_{l,2}! }{ \Pi_{1 \leq j \leq m-1,0 \leq h \leq l} p_{h,2,j}! } \\
\times \Pi_{j=1}^{s} ((\bar{Z}_{0}^{1})^{-p_{0,2,j}} \cdots (\bar{Z}_{l}^{1})^{-p_{l,2,j}}(\bar{X}^{1})^{S}K_{4}\delta \\
+ C_{4.4} \sum_{q=0}^{S-1} (\bar{Z}_{0}^{1})^{-p_{0,2,j}} \cdots (\bar{Z}_{l}^{1})^{-p_{l,2,j}} \delta
||\frac{X^q}{q!}||_{(\bar{Z}_{0}^{0},\ldots,\bar{Z}_{l}^{0},\bar{X}^{0})}) \\
\times \Pi_{j=s+1}^{m-1} ((\bar{Z}_{0}^{1})^{-p_{0,2,j}} \cdots (\bar{Z}_{l}^{1})^{-p_{l,2,j}}(\bar{X}^{1})^{S}K_{4}\delta \\
+ C_{4.4} \sum_{q=0}^{S-1} (\bar{Z}_{0}^{1})^{-p_{0,2,j}} \cdots (\bar{Z}_{l}^{1})^{-p_{l,2,j}} \delta
\left. ||\frac{X^q}{q!}||_{(\bar{Z}_{0}^{0},\ldots,\bar{Z}_{l}^{0},\bar{X}^{0})}) \right) \leq \frac{1}{2}
\label{sum_d_e_less_one_half}
\end{multline}
Taking into account all the inequalities (\ref{norm_d_int_diff_H1_minus_H2<}), (\ref{factor_int_H1_w_minus_int_H2_w}),
(\ref{norm_diff_em_H1_minus_H2}), (\ref{norm_diff_em_part2}), (\ref{norm_diff_int_H1_minus_H2}), (\ref{norm_diff_power_H1_H2_w})
and (\ref{norm_diff_Hr_w}) under the constraint (\ref{sum_d_e_less_one_half}), we deduce (\ref{A_shrink}).\medskip

Finally, for fixed real numbers $\bar{Z}_{j}^{1}>0$, $0 \leq j \leq l$, such that
$M^{0} < \bar{Z}_{j}^{1} < Z_{j}^{0}$, we choose $0 < \bar{X}^{1} < \bar{X}^{0}$,
$\delta>0$ and the constant $K_{4}>0$ (recall that $R=K_{4}\delta$) in such a way that both constraints (\ref{stable_delta_K4_X})
and (\ref{sum_d_e_less_one_half}) hold. For these constants, the map $A$ satisfies both (\ref{ABR_sub_BR}) and (\ref{A_shrink}).
\end{proof}
We are in position to give the proof of Proposition 1. Let $\bar{Z}_{j}^{1}>0$, $0 \leq j \leq l$, be real numbers such that
$M^{0} < \bar{Z}_{j}^{1} < Z_{j}^{0}$, we choose $0 < \bar{X}^{1} < \bar{X}^{0}$,
$\delta>0$ and the constant $K_{4}>0$ as in Lemma 4. We have put $R=\delta K_{4}$.

>From the fact that $G(\bar{Z}_{0}^{1},\ldots,\bar{Z}_{l}^{1},\bar{X}^{1})$ equipped with the norm
$||.||_{(\bar{Z}_{0}^{1},\ldots,\bar{Z}_{l}^{1},\bar{X}^{1})}$ is a Banach space, the closed ball $(B_{R},d)$ for the metric
$d(x,y) = ||y-x||_{(\bar{Z}_{0}^{1},\ldots,\bar{Z}_{l}^{1},\bar{X}^{1})}$ is a complete metric space. From Lemma 4, the map
$A$ is a contraction from $(B_{R},d)$ into itself. From the classical fixed point theorem, we deduce that there exists a unique
$H_{f}(Z_{0},\ldots,Z_{l},X) \in B_{R}$ such that $A(H_{f})=H_{f}$.

By construction and taking into account Lemma 1, the formal series
$$ U_{f}(Z_{0},\ldots,Z_{l},X) = \partial_{X}^{-S}H_{f}(Z_{0},\ldots,Z_{l},X) + w(Z_{0},\ldots,Z_{l},X) $$
is a solution of the problem (\ref{CP_CK}), (\ref{CP_CK_init_cond}). Moreover,
$U_{f}(Z_{0},\ldots,Z_{l},X) \in G(\bar{Z}_{0}^{1},\ldots,\bar{Z}_{l}^{1},\bar{X}^{1})$ and
$$
|| U_{f}(Z_{0},\ldots,Z_{l},X) ||_{(\bar{Z}_{0}^{1},\ldots,\bar{Z}_{l}^{1},\bar{X}^{1})} \leq
\delta( (\bar{X}^{1})^{S}K_{4} + \sum_{j=0}^{S-1}
||\frac{X^j}{j!}||_{(\bar{Z}_{0}^{0},\ldots,\bar{Z}_{l}^{0},\bar{X}^{0})} )
$$
which yields (\ref{estim_sol_CK}).
\end{proof}

\subsection{Weighted Banach spaces of holomorphic functions on sectors}

We denote $D(0,r)$ the open disc centered at $0$ with radius $r>0$ in $\mathbb{C}$. Let $\epsilon_{0}>0$ a real number and let $S_{d}$ be
an unbounded sector in direction $d \in \mathbb{R}$ centered at $0$ in $\mathbb{C}$. By convention, these sectors do not contain the origin in
$\mathbb{C}$. For any open set $\mathcal{D} \subset \mathbb{C}$, we denote $\mathcal{O}(\mathcal{D})$ the vector space of
holomorphic functions on
$\mathcal{D}$. Let $l \geq 1$ be an integer. For all tuples $\underline{\beta} = (\beta_{0},\ldots,\beta_{l+1}) \in \mathbb{N}^{l+2}$,
let $\rho_{\underline{\beta}} \geq 0$ be positive real numbers such that
$\rho_{\underline{\beta}'} \leq \rho_{\underline{\beta}}$ if $|\underline{\beta}'| \geq |\underline{\beta}|$ (where
by definition $|\underline{\beta}| = \sum_{j=0}^{l+1} \beta_{j}$). We define
$\Omega_{\underline{\beta}} = D(0,\rho_{\underline{\beta}}) \cup S_{d}$.

\begin{defin} Let $b > 1$ a real number and let $r_{b}(\underline{\beta}) = \sum_{n=0}^{|\underline{\beta}|} 1/(n+1)^b$ for all
 tuples $\underline{\beta} \in \mathbb{N}^{l+2}$. Let $\epsilon \in D(0,\epsilon_{0}) \setminus \{ 0 \}$ and $r,\sigma > 0$ be real numbers.
We denote by
$E_{\underline{\beta},\epsilon,\sigma,r,\Omega_{\underline{\beta}}}$ the vector space of all functions
$v \in \mathcal{O}(\Omega_{\underline{\beta}})$ such that
$$ ||v(\tau)||_{\underline{\beta},\epsilon,\sigma,r,\Omega_{\underline{\beta}}} :=
\sup_{\tau \in \Omega_{\underline{\beta}}} |v(\tau)| (1+\frac{|\tau|^2}{|\epsilon|^{2r}})
\exp\left( -\frac{\sigma}{|\epsilon|^{r}} r_{b}(\underline{\beta}) |\tau| \right) $$
is finite.
\end{defin}
{\bf Remark:} These norms are appropriate modifications of the norms defined in \cite{lamasa}, \cite{ma1}.\medskip

In the next proposition, we study some parameter depending linear operators acting on the spaces
$E_{\underline{\beta},\epsilon,\sigma,r,\Omega_{\underline{\beta}}}$.

\begin{prop} Let $s_{1},k_{0} \geq 0$ be integers. Let $\underline{\beta} = (\beta_{0},\ldots,\beta_{l+1}) \in \mathbb{N}^{l+2}$,
$\tilde{\underline{\beta}} = (\tilde{\beta}_{0},\ldots,\tilde{\beta}_{l+1}) \in \mathbb{N}^{l+2}$. Under the assumption
that $|\tilde{\underline{\beta}}| > |\underline{\beta}|$, the operator $\tau^{s_1}\partial_{\tau}^{-k_{0}}$ defines a bounded linear
map from $E_{\underline{\beta},\epsilon,\sigma,r,\Omega_{\underline{\beta}}}$ into
$E_{\tilde{\underline{\beta}},\epsilon,\sigma,r,\Omega_{\tilde{\underline{\beta}}}}$, for all $\epsilon \in D(0,\epsilon_{0}) \setminus \{ 0 \}$.
Moreover, one has
\begin{multline}
 || \tau^{s_1}\partial_{\tau}^{-k_0}
V_{\underline{\beta}}(\tau,\epsilon)||_{\tilde{\underline{\beta}},\epsilon,\sigma,r,\Omega_{\tilde{\underline{\beta}}}} \leq
|| V_{\underline{\beta}}(\tau,\epsilon)||_{\underline{\beta},\epsilon,\sigma,r,\Omega_{\underline{\beta}}}\\
\times |\epsilon|^{r(s_{1}+k_{0})} \left( (|\tilde{\underline{\beta}}|+1)^{b(s_{1}+k_{0})}(
\frac{ (s_{1}+k_{0})e^{-1} }{\sigma( |\tilde{\underline{\beta}}| - |\underline{\beta}|)} )^{s_{1}+k_{0}} +
(|\tilde{\underline{\beta}}|+1)^{b(s_{1}+k_{0}+2)}(
\frac{ (s_{1}+k_{0}+2)e^{-1} }{\sigma( |\tilde{\underline{\beta}}| - |\underline{\beta}|)} )^{s_{1}+k_{0}+2} \right)
\label{norm_tau_partial_V<norm_V}
\end{multline}
for all $V_{\underline{\beta}}(\tau,\epsilon) \in E_{\underline{\beta},\epsilon,\sigma,r,\Omega_{\underline{\beta}}}$.
 \end{prop}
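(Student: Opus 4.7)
The plan is to estimate $\tau^{s_{1}}\partial_{\tau}^{-k_{0}}V_{\underline{\beta}}(\tau,\epsilon)$ pointwise on $\Omega_{\tilde{\underline{\beta}}}$. The monotonicity of the radii $\rho_{\underline{\beta}}$ gives the inclusion $\Omega_{\tilde{\underline{\beta}}}\subset\Omega_{\underline{\beta}}$, and moreover for every $\tau\in\Omega_{\tilde{\underline{\beta}}}$ the radial segment $\{h\tau:h\in[0,1]\}$ is contained in $\Omega_{\underline{\beta}}$ (the disc $D(0,\rho_{\underline{\beta}})$ is star-shaped at $0$, and $S_{d}$ is stable under positive rescaling). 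I would represent the antiderivative in the standard iterated-integral form
\[
\partial_{\tau}^{-k_{0}}V_{\underline{\beta}}(\tau,\epsilon) = \frac{1}{(k_{0}-1)!}\int_{0}^{\tau}(\tau-s)^{k_{0}-1}V_{\underline{\beta}}(s,\epsilon)\,ds,
\]
the case $k_{0}=0$ reducing to plain multiplication by $\tau^{s_{1}}$. Parameterising $s=h\tau$, $h\in[0,1]$, plugging in the defining bound $|V_{\underline{\beta}}(s,\epsilon)|\leq ||V_{\underline{\beta}}||_{\underline{\beta},\epsilon,\sigma,r,\Omega_{\underline{\beta}}}(1+|s|^{2}/|\epsilon|^{2r})^{-1}\exp(\sigma r_{b}(\underline{\beta})|s|/|\epsilon|^{r})$, discarding the factor $(1+|s|^{2}/|\epsilon|^{2r})^{-1}\leq 1$, and computing $\int_{0}^{1}(1-h)^{k_{0}-1}dh=1/k_{0}$ yields
\[
|\tau|^{s_{1}}\bigl|\partial_{\tau}^{-k_{0}}V_{\underline{\beta}}(\tau,\epsilon)\bigr|\leq \frac{||V_{\underline{\beta}}||_{\underline{\beta},\epsilon,\sigma,r,\Omega_{\underline{\beta}}}}{k_{0}!}|\tau|^{s_{1}+k_{0}}\exp\bigl(\sigma r_{b}(\underline{\beta})|\tau|/|\epsilon|^{r}\bigr).
\]

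Multiplying by the tilde weight $(1+|\tau|^{2}/|\epsilon|^{2r})\exp(-\sigma r_{b}(\tilde{\underline{\beta}})|\tau|/|\epsilon|^{r})$ converts the exponential into $\exp(-\sigma\delta|\tau|/|\epsilon|^{r})$ with $\delta:=r_{b}(\tilde{\underline{\beta}})-r_{b}(\underline{\beta})>0$; this is the essential use of the hypothesis $|\tilde{\underline{\beta}}|>|\underline{\beta}|$. What is left is a one-variable optimisation: with $x=|\tau|/|\epsilon|^{r}$, the quantity to bound is $|\epsilon|^{r(s_{1}+k_{0})}(x^{s_{1}+k_{0}}+x^{s_{1}+k_{0}+2})\exp(-\sigma\delta x)$, and the two terms are each handled by the elementary identity $\sup_{x\geq 0}x^{p}e^{-cx}=(p/(ce))^{p}$, applied with $p=s_{1}+k_{0}$ and $p=s_{1}+k_{0}+2$. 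This produces a bound of the form
\[
|\epsilon|^{r(s_{1}+k_{0})}\Bigl[\bigl(\tfrac{(s_{1}+k_{0})e^{-1}}{\sigma\delta}\bigr)^{s_{1}+k_{0}}+\bigl(\tfrac{(s_{1}+k_{0}+2)e^{-1}}{\sigma\delta}\bigr)^{s_{1}+k_{0}+2}\Bigr]
\]
already matching the shape of (\ref{norm_tau_partial_V<norm_V}).

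The final step is purely arithmetic: from $r_{b}(\tilde{\underline{\beta}})-r_{b}(\underline{\beta})=\sum_{n=|\underline{\beta}|+1}^{|\tilde{\underline{\beta}}|}(n+1)^{-b}$ and the monotonicity of $n\mapsto(n+1)^{-b}$, one has $\delta\geq(|\tilde{\underline{\beta}}|-|\underline{\beta}|)(|\tilde{\underline{\beta}}|+1)^{-b}$, hence $1/\delta\leq(|\tilde{\underline{\beta}}|+1)^{b}/(|\tilde{\underline{\beta}}|-|\underline{\beta}|)$; substituting this estimate in the previous display reproduces exactly the factors $(|\tilde{\underline{\beta}}|+1)^{b(s_{1}+k_{0})}$ and $(|\tilde{\underline{\beta}}|+1)^{b(s_{1}+k_{0}+2)}$ appearing in the claim. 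I anticipate no serious analytic obstacle; the only care required is bookkeeping, namely tracking the two contributions from the weight $1+|\tau|^{2}/|\epsilon|^{2r}$ separately and verifying that the path of integration remains inside $\Omega_{\underline{\beta}}$. The fact that $\delta\to 0$ as $|\tilde{\underline{\beta}}|\to\infty$ is the small-divisor phenomenon mentioned in the introduction and is precisely the source of the algebraic blow-up in $|\tilde{\underline{\beta}}|$ in the bound.
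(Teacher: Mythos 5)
Your proof is correct and follows essentially the same route as the paper's: represent $\partial_{\tau}^{-k_{0}}$ as an integral over $[0,\tau]$ (the paper writes it as an iterated integral over the unit cube, equivalent to your Cauchy kernel form), pull out $|\tau|^{s_{1}+k_{0}}$, trade $r_{b}(\underline{\beta})$ for $r_{b}(\tilde{\underline{\beta}})$ at the cost of $\exp(-\sigma\delta|\tau|/|\epsilon|^{r})$, optimise each of the two weight terms via $\sup_{x\ge0}x^{p}e^{-cx}=(p/(ce))^{p}$, and finish with the lower bound $\delta\ge(|\tilde{\underline{\beta}}|-|\underline{\beta}|)(|\tilde{\underline{\beta}}|+1)^{-b}$. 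The only cosmetic difference is your extra (discarded) factor $1/k_{0}!$ and the explicit star-shapedness remark about the integration path, both of which are harmless.
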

\begin{proof} The proof follows the same lines of arguments as Lemma 1 from \cite{ma1}. By construction of the operator
$\partial_{\tau}^{-k_0}$, one can write
\begin{multline}
\tau^{s_1}\partial_{\tau}^{-k_0}V_{\underline{\beta}}(\tau,\epsilon) =
\tau^{s_{1}+k_{0}} \int_{0}^{1} \cdots \int_{0}^{1} V_{\underline{\beta}}(h_{k_0} \cdots h_{1} \tau,\epsilon)\\
\times (1 + \frac{|h_{k_0} \cdots h_{1} \tau|^{2} }{ |\epsilon|^{2r} })
\exp( -\frac{\sigma}{|\epsilon|^{r}}r_{b}(\underline{\beta})|h_{k_0} \cdots h_{1} \tau| )
\frac{ \exp( \frac{\sigma}{|\epsilon|^{r}}r_{b}(\underline{\beta})|h_{k_0} \cdots h_{1} \tau| ) }{ 1 +
|h_{k_0} \cdots h_{1} \tau|^{2}/ |\epsilon|^{2r} }\\
\times M_{k_0}(h_{1},\ldots,h_{k_{0}-1}) dh_{k_0} \cdots dh_{1}
\end{multline}
where $M_{k_0}(h_{1},\ldots,h_{k_{0}-1})$  is a monic monomial for $k_{0} \geq 2$, while $M_{1}=1$, for all
$\tau \in \Omega_{\underline{\beta}}$. We deduce that
\begin{multline}
|\tau^{s_1}\partial_{\tau}^{-k_0}V_{\underline{\beta}}(\tau,\epsilon)| (1 + \frac{|\tau|^2}{|\epsilon|^{2r}})
\exp( - \frac{\sigma}{|\epsilon|^{r}} r_{b}(\tilde{\underline{\beta}}) |\tau| )\\
\leq |\tau|^{s_{1}+k_{0}}||V_{\underline{\beta}}(\tau,\epsilon)||_{\underline{\beta},\epsilon,\sigma,r,\Omega_{\underline{\beta}}}
(1 + \frac{|\tau|^2}{|\epsilon|^{2r}}) \exp( - \frac{\sigma}{|\epsilon|^{r}} ( r_{b}(\tilde{\underline{\beta}})  -
 r_{b}(\underline{\beta}) ) |\tau| ) \label{tau_partial_tau_V_exp<}
\end{multline}
for all $\tau \in \Omega_{\tilde{\underline{\beta}}} \subset \Omega_{\underline{\beta}}$. By definition of $r_{b}$, one has
\begin{equation}
r_{b}(\tilde{\underline{\beta}}) - r_{b}(\underline{\beta}) = \sum_{n=|\underline{\beta}|+1}^{|\tilde{\underline{\beta}}|}
\frac{1}{(n+1)^{b}} \geq \frac{|\tilde{\underline{\beta}}| - |\underline{\beta}| }{ ( |\tilde{\underline{\beta}}| + 1)^{b} }
\label{minor_difference_rb}
\end{equation}
Now, we recall the classical estimates: for all integers $m_{1},m_{2} \geq 0$,
\begin{equation}
\sup_{x \geq 0} x^{m_1}e^{-m_{2}x} = (\frac{m_1}{m_2})^{m_1} e^{-m_1} \label{xexpx<}
\end{equation}
holds. Using (\ref{xexpx<}), we deduce that
\begin{multline}
(|\tau|^{s_{1}+k_{0}}+ \frac{|\tau|^{s_{1}+k_{0}+2}}{|\epsilon|^{2r}})
\exp( -\frac{\sigma}{|\epsilon|^{r}}
(\frac{ |\tilde{\underline{\beta}}| - |\underline{\beta}| }{ (|\tilde{\underline{\beta}}|+1)^{b}})|\tau| ) \leq \\
 |\epsilon|^{r(s_{1}+k_{0})} \left( (|\tilde{\underline{\beta}}|+1)^{b(s_{1}+k_{0})}(
\frac{ (s_{1}+k_{0})e^{-1} }{\sigma( |\tilde{\underline{\beta}}| - |\underline{\beta}|)} )^{s_{1}+k_{0}} +
(|\tilde{\underline{\beta}}|+1)^{b(s_{1}+k_{0}+2)}(
\frac{ (s_{1}+k_{0}+2)e^{-1} }{\sigma( |\tilde{\underline{\beta}}| - |\underline{\beta}|)} )^{s_{1}+k_{0}+2} \right)
\label{tau_exp_tau<}
\end{multline}
Taking into account (\ref{tau_partial_tau_V_exp<}), (\ref{minor_difference_rb}) and (\ref{tau_exp_tau<}), we get
(\ref{norm_tau_partial_V<norm_V}).
\end{proof} \medskip

In the next proposition, we study the convolution product of functions in the spaces
$E_{\underline{\beta},\epsilon,\sigma,r,\Omega_{\underline{\beta}}}$.

\begin{prop} Let $\underline{\beta}^{1} = (\beta_{0}^{1},\ldots,\beta_{l+1}^{1}) \in \mathbb{N}^{l+2}$ and
$\underline{\beta}^{2} = (\beta_{0}^{2},\ldots,\beta_{l+1}^{2}) \in \mathbb{N}^{l+2}$. Let
$\underline{\beta} \in \mathbb{N}^{l+2}$ such that $|\underline{\beta}| \geq |\underline{\beta}^{1}| + |\underline{\beta}^{2}|$.
Then, for any $V_{\underline{\beta}^{1}}(\tau,\epsilon) \in E_{\underline{\beta}^{1},\epsilon,\sigma,r,\Omega_{\underline{\beta}^{1}}}$,
$V_{\underline{\beta}^{2}}(\tau,\epsilon) \in E_{\underline{\beta}^{2},\epsilon,\sigma,r,\Omega_{\underline{\beta}^{2}}}$, the
convolution product $V_{\underline{\beta}^{1}}(\tau,\epsilon) * V_{\underline{\beta}^{2}}(\tau,\epsilon)$ belongs to
$E_{\underline{\beta},\epsilon,\sigma,r,\Omega_{\underline{\beta}}}$, for all $\epsilon \in D(0,\epsilon_{0}) \setminus \{ 0 \}$. Moreover,
there exists
a universal constant $C_{5}>0$ such that
\begin{equation}
|| \int_{0}^{\tau} V_{\underline{\beta}^{1}}(\tau-s,\epsilon)
V_{\underline{\beta}^{2}}(s,\epsilon) ds ||_{\underline{\beta},\epsilon,\sigma,r,\Omega_{\underline{\beta}}}\\
\leq
C_{5} |\epsilon|^{r} ||V_{\underline{\beta}^{1}}(\tau,\epsilon)||_{\underline{\beta}^{1},\epsilon,\sigma,r,\Omega_{\underline{\beta}^{1}}}
||V_{\underline{\beta}^{2}}(\tau,\epsilon)||_{\underline{\beta}^{2},\epsilon,\sigma,r,\Omega_{\underline{\beta}^{2}}}
\label{norm_conv_V1_V2<norm_V1_norm_V2}
\end{equation}
for all $V_{\underline{\beta}^{1}}(\tau,\epsilon) \in E_{\underline{\beta}^{1},\epsilon,\sigma,r,\Omega_{\underline{\beta}^{1}}}$,
$V_{\underline{\beta}^{2}}(\tau,\epsilon) \in E_{\underline{\beta}^{2},\epsilon,\sigma,r,\Omega_{\underline{\beta}^{2}}}$.
\end{prop}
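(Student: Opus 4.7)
The plan is to parametrize the convolution as a one-dimensional integral over $[0,1]$ along the straight segment from $0$ to $\tau$, apply the pointwise bounds coming from the norms of $V_{\underline{\beta}^1}$ and $V_{\underline{\beta}^2}$, combine the exponential weights using the monotonicity of $r_b$, and reduce the whole estimate to a single universal calculus inequality in a dimensionless variable.

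First I would record the consequences of the hypothesis $|\underline{\beta}| \geq |\underline{\beta}^1|+|\underline{\beta}^2|$. Since $|\underline{\beta}^j| \leq |\underline{\beta}|$ for $j=1,2$, the monotonicity conventions give $\rho_{\underline{\beta}} \leq \rho_{\underline{\beta}^j}$ and $r_b(\underline{\beta}^j) \leq r_b(\underline{\beta})$, and in particular $\Omega_{\underline{\beta}} \subset \Omega_{\underline{\beta}^j}$. Because both the disc $D(0,\rho_{\underline{\beta}^j})$ and the sector $S_d$ are star-shaped with respect to $0$, for every $\tau \in \Omega_{\underline{\beta}}$ the segment $\{u\tau : u \in [0,1]\}$ lies in $\Omega_{\underline{\beta}^j}$ (either it stays in the disc, or it has constant argument inside $S_d$). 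This justifies the change of variables $s=u\tau$, $ds = \tau\,du$, which rewrites
$$
\bigl(V_{\underline{\beta}^1} * V_{\underline{\beta}^2}\bigr)(\tau,\epsilon) = \tau \int_{0}^{1} V_{\underline{\beta}^1}((1-u)\tau,\epsilon)\, V_{\underline{\beta}^2}(u\tau,\epsilon)\, du,
$$
which is holomorphic in $\tau \in \Omega_{\underline{\beta}}$.

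Next, from the definition of the norm $\|\cdot\|_{\underline{\beta}^j,\epsilon,\sigma,r,\Omega_{\underline{\beta}^j}}$ one has the pointwise estimate
$$
|V_{\underline{\beta}^j}(w,\epsilon)| \leq \|V_{\underline{\beta}^j}\|_{\underline{\beta}^j,\epsilon,\sigma,r,\Omega_{\underline{\beta}^j}} \frac{|\epsilon|^{2r}}{|\epsilon|^{2r}+|w|^2} \exp\!\left(\tfrac{\sigma}{|\epsilon|^r} r_b(\underline{\beta}^j)|w|\right)
$$
for $w \in \Omega_{\underline{\beta}^j}$. Applying this at $w = (1-u)\tau$ and $w = u\tau$ and using
$$
(1-u)\, r_b(\underline{\beta}^1) + u\, r_b(\underline{\beta}^2) \leq r_b(\underline{\beta}),
$$
the product of the two exponentials is bounded by $\exp(\tfrac{\sigma}{|\epsilon|^r} r_b(\underline{\beta})|\tau|)$, which cancels against the weight that appears in $\|\,\cdot\,\|_{\underline{\beta},\epsilon,\sigma,r,\Omega_{\underline{\beta}}}$. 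After inserting the factor $1+|\tau|^2/|\epsilon|^{2r}$ and setting the dimensionless variable $v=|\tau|/|\epsilon|^r$, the estimate reduces to
$$
\|V_{\underline{\beta}^1} * V_{\underline{\beta}^2}\|_{\underline{\beta},\epsilon,\sigma,r,\Omega_{\underline{\beta}}} \leq |\epsilon|^{r}\, \|V_{\underline{\beta}^1}\|\, \|V_{\underline{\beta}^2}\| \cdot \sup_{v\geq 0} v(1+v^2) \int_{0}^{1} \frac{du}{(1+(1-u)^2 v^2)(1+u^2 v^2)}.
$$

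The only genuinely analytic step is to show this supremum is finite. I would split the integral at $u=1/2$. On $[0,1/2]$ one has $(1-u)^2 v^2 \geq v^2/4$, hence $1+(1-u)^2 v^2 \geq 1+v^2/4$, and the remaining factor is handled by $\int_{0}^{1/2} du/(1+u^2 v^2) \leq \pi/(2v)$. A symmetric argument takes care of $[1/2,1]$. Together these yield a bound of the form $C_5 := 4\pi \sup_{v \geq 0}(1+v^2)/(1+v^2/4) \leq 16\pi$, which is an absolute constant independent of $\underline{\beta}, \underline{\beta}^1, \underline{\beta}^2, \epsilon, \sigma, r, b$. This produces (\ref{norm_conv_V1_V2<norm_V1_norm_V2}).

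The main obstacle is bookkeeping rather than depth: one must confirm that the scaling in $\epsilon$ emerges correctly (the factor $|\epsilon|^r$ comes from $|\tau|\,du \sim |\epsilon|^r v\,du$ after the change of variable, whereas the two denominators produce $|\epsilon|^{4r}/(\cdot)(\cdot)$), and that the convex-combination bound for the exponents passes through exactly the inequality $|\underline{\beta}^j|\leq|\underline{\beta}|$, which is a consequence of the stronger hypothesis $|\underline{\beta}| \geq |\underline{\beta}^1|+|\underline{\beta}^2|$ actually imposed.
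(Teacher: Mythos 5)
Your argument is correct and follows the same overall scheme as the paper's proof: parametrize the convolution along the segment $s=u\tau$, insert the weights from the two norms, use the monotonicity of $r_{b}$ to bound the convex combination $(1-u)r_{b}(\underline{\beta}^{1})+u\,r_{b}(\underline{\beta}^{2})$ by $r_{b}(\underline{\beta})$, and reduce everything to the finiteness of $\sup_{v\geq 0} v(1+v^{2})\int_{0}^{1}\frac{du}{(1+(1-u)^{2}v^{2})(1+u^{2}v^{2})}$ together with the observation that the substitution $v=|\tau|/|\epsilon|^{r}$ produces exactly one factor $|\epsilon|^{r}$. The one place where you genuinely diverge is the final calculus step: the paper does not prove the universal bound on this integral but cites it from Costin--Tanveer (their function $J(|\tau|,|\epsilon|)$), whereas you establish it directly by splitting at $u=1/2$, using $1+(1-u)^{2}v^{2}\geq 1+v^{2}/4$ on $[0,1/2]$ and $\int_{0}^{1/2}du/(1+u^{2}v^{2})=\tfrac{1}{v}\arctan(v/2)\leq \pi/(2v)$, then symmetrizing. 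This buys a self-contained proof with an explicit numerical constant (your $16\pi$ is safe; the computation actually gives $4\pi$), at no cost in generality. Your remark that $\Omega_{\underline{\beta}}\subset\Omega_{\underline{\beta}^{1}}\cap\Omega_{\underline{\beta}^{2}}$ and that the segment $\{u\tau\}$ stays in these domains is a point the paper leaves implicit; making it explicit is a small improvement in rigor rather than a deviation.
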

\begin{proof} We mimic the proof of Lemma 3 in \cite{ma1}. One can write
\begin{multline*}
\int_{0}^{\tau} V_{\underline{\beta}^{1}}(\tau-s,\epsilon)
V_{\underline{\beta}^{2}}(s,\epsilon) ds = \int_{0}^{\tau}
V_{\underline{\beta}^{1}}(\tau-s,\epsilon)(1+\frac{|\tau-s|^{2}}{|\epsilon|^{2r}})
\exp(-\frac{\sigma}{|\epsilon|^{r}}r_{b}(\underline{\beta}^{1})|\tau-s| ) \\
\times V_{\underline{\beta}^{2}}(s,\epsilon)(1+\frac{|s|^{2}}{|\epsilon|^{2r}})
\exp(-\frac{\sigma}{|\epsilon|^{r}}r_{b}(\underline{\beta}^{2})|s| ) \times
\frac{ \exp( \frac{\sigma}{|\epsilon|^{r}}(r_{b}(\underline{\beta}^{1})|\tau - s| + r_{b}(\underline{\beta}^{2})|s|) ) }{
(1+\frac{|\tau-s|^2}{|\epsilon|^{2r}})(1+\frac{|s|^2}{|\epsilon|^{2r}}) } ds
\end{multline*}
for all $\tau \in \Omega_{\underline{\beta}} \subset \Omega_{\underline{\beta}^{1}} \cap \Omega_{\underline{\beta}^{2}}$. We deduce
that
\begin{multline}
|\int_{0}^{\tau} V_{\underline{\beta}^{1}}(\tau-s,\epsilon)
V_{\underline{\beta}^{2}}(s,\epsilon) ds| \leq ||V_{\underline{\beta}^{1}}(\tau,\epsilon)||_{\underline{\beta}^{1},\epsilon,\sigma,r,\Omega_{\underline{\beta}^{1}}}
||V_{\underline{\beta}^{2}}(\tau,\epsilon)||_{\underline{\beta}^{2},\epsilon,\sigma,r,\Omega_{\underline{\beta}^{2}}}\\
\times \int_{0}^{1} \frac{ |\tau| \exp( \frac{\sigma |\tau|}{|\epsilon|^{r}}(r_{b}(\underline{\beta}^{1})(1-h)+
r_{b}(\underline{\beta}^{2})h) }{ (1 + \frac{|\tau|^2}{|\epsilon|^{2r}}(1-h)^2 )( 1 + \frac{|\tau|^2}{|\epsilon|^{2r}}h^2)} dh
\label{conv_V1_V2<norm_V1_norm_V2}
\end{multline}
for all $\tau \in \Omega_{\underline{\beta}}$. Since $r_{b}$ is increasing, one has
$$ r_{b}(\underline{\beta}^{1})(1-h) + r_{b}(\underline{\beta}^{2})h \leq r_{b}(\underline{\beta}).$$
Therefore,
\begin{multline}
(1 + \frac{|\tau|^2}{|\epsilon|^{2r}}) \exp( -\frac{\sigma}{|\epsilon|^r} r_{b}(\underline{\beta})|\tau| )
\int_{0}^{1} \frac{ |\tau| \exp( \frac{\sigma |\tau|}{|\epsilon|^{r}}(r_{b}(\underline{\beta}^{1})(1-h)+
r_{b}(\underline{\beta}^{2})h) }{ (1 + \frac{|\tau|^2}{|\epsilon|^{2r}}(1-h)^2 )( 1 + \frac{|\tau|^2}{|\epsilon|^{2r}}h^2)} dh\\
\leq \int_{0}^{1} \frac{ (1 + \frac{|\tau|^2}{|\epsilon|^{2r}})|\tau| }{
(1 + \frac{|\tau|^2}{|\epsilon|^{2r}}(1-h)^2 )( 1 + \frac{|\tau|^2}{|\epsilon|^{2r}}h^2)} dh = J(|\tau|,|\epsilon|) \label{defin_J}
\end{multline}
for all $\tau \in \Omega_{\underline{\beta}}$. Now, from \cite{cota}, we know that there exists a universal constant $C_{5}>0$ such that
$$
\frac{J(|\epsilon|^{r}|\tau|,|\epsilon|)}{|\epsilon|^r} \leq C_{5}
$$
for all $\tau \in \mathbb{C}$, $\epsilon \in \mathbb{C}^{\ast}$. We deduce that
\begin{equation}
\sup_{\tau \in \mathbb{C}} \frac{ J( |\tau|, |\epsilon| ) }{|\epsilon|^r} = \sup_{\tau \in \mathbb{C}}
\frac{J(|\epsilon|^{r}|\tau|,|\epsilon|)}{|\epsilon|^r} \leq C_{5} \label{sup_J<}
\end{equation}
for all $\epsilon \in \mathbb{C}^{\ast}$. Finally, gathering (\ref{conv_V1_V2<norm_V1_norm_V2}), (\ref{defin_J}) and (\ref{sup_J<}) yields
the estimates (\ref{norm_conv_V1_V2<norm_V1_norm_V2}).
\end{proof}
\begin{corol} Let $l_{0} \geq 0$ be an integer. The operator $\partial_{\tau}^{-l_0}$ defines a bounded linear map from  
$E_{\underline{\beta},\epsilon,\sigma,r,\Omega_{\underline{\beta}}}$ into itself, for all $\epsilon \in D(0,\epsilon_{0}) \setminus \{ 0 \}$.
Moreover, there
exists a constant $C_{6}>0$ (depending on $l_{0},\sigma$) such that
\begin{equation}
|| \partial_{\tau}^{-l_0}V_{\underline{\beta}}(\tau,\epsilon)||_{\underline{\beta},\epsilon,\sigma,r,\Omega_{\underline{\beta}}} \leq
C_{6}|\epsilon|^{rl_{0}}||V_{\underline{\beta}}(\tau,\epsilon)||_{\underline{\beta},\epsilon,\sigma,r,\Omega_{\underline{\beta}}}
\label{norm_int_tau_l0_v<norm_v}
\end{equation}
for all $V_{\underline{\beta}}(\tau,\epsilon) \in E_{\underline{\beta},\epsilon,\sigma,r,\Omega_{\underline{\beta}}}$.
\end{corol}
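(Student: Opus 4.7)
The plan is to reduce the statement to Proposition 3 by realizing the antiderivative operator as an iterated Borel-type convolution against the constant function $1$. Explicitly, $(\partial_{\tau}^{-1}V)(\tau)=\int_{0}^{\tau}V(s)ds=(1*V)(\tau)$, so $\partial_{\tau}^{-l_0}V$ is the $l_0$-fold convolution of the constant function $1$ with $V$.

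First I would show that the constant function $1$ belongs to $E_{\underline{0},\epsilon,\sigma,r,\Omega_{\underline{0}}}$, where $\underline{0}=(0,\ldots,0)\in\mathbb{N}^{l+2}$, with a norm bounded by a constant depending only on $\sigma$. Indeed, $r_{b}(\underline{0})=1$, and the substitution $x=|\tau|/|\epsilon|^{r}$ gives
$$\|1\|_{\underline{0},\epsilon,\sigma,r,\Omega_{\underline{0}}}=\sup_{\tau\in\Omega_{\underline{0}}}\Bigl(1+\tfrac{|\tau|^{2}}{|\epsilon|^{2r}}\Bigr)\exp\Bigl(-\tfrac{\sigma}{|\epsilon|^{r}}|\tau|\Bigr)\leq\sup_{x\geq 0}(1+x^{2})e^{-\sigma x}=:K_{\sigma},$$
a finite constant independent of $\epsilon$.

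Next I would apply Proposition 3 with $\underline{\beta}^{1}=\underline{0}$ and $\underline{\beta}^{2}=\underline{\beta}$. The constraint $|\underline{\beta}|\geq|\underline{\beta}^{1}|+|\underline{\beta}^{2}|$ becomes the equality $|\underline{\beta}|\geq 0+|\underline{\beta}|$, which holds. This yields
$$\|\partial_{\tau}^{-1}V_{\underline{\beta}}\|_{\underline{\beta},\epsilon,\sigma,r,\Omega_{\underline{\beta}}}\leq C_{5}|\epsilon|^{r}\,K_{\sigma}\,\|V_{\underline{\beta}}\|_{\underline{\beta},\epsilon,\sigma,r,\Omega_{\underline{\beta}}}.$$
Since the output still lies in $E_{\underline{\beta},\epsilon,\sigma,r,\Omega_{\underline{\beta}}}$, I can iterate this estimate $l_{0}$ times, convolving at each step with $1$. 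After $l_{0}$ iterations I obtain
$$\|\partial_{\tau}^{-l_{0}}V_{\underline{\beta}}\|_{\underline{\beta},\epsilon,\sigma,r,\Omega_{\underline{\beta}}}\leq (C_{5}K_{\sigma})^{l_{0}}|\epsilon|^{rl_{0}}\|V_{\underline{\beta}}\|_{\underline{\beta},\epsilon,\sigma,r,\Omega_{\underline{\beta}}},$$
so that $C_{6}:=(C_{5}K_{\sigma})^{l_{0}}$, which depends only on $l_{0}$ and $\sigma$, gives the required bound (\ref{norm_int_tau_l0_v<norm_v}).

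This argument is essentially a direct application of the tools already built; there is no real obstacle. The only minor point worth checking is the compatibility of domains, namely that convolving functions in $E_{\underline{0},\epsilon,\sigma,r,\Omega_{\underline{0}}}$ and $E_{\underline{\beta},\epsilon,\sigma,r,\Omega_{\underline{\beta}}}$ produces a function on $\Omega_{\underline{\beta}}\subset\Omega_{\underline{0}}$, but this is exactly what Proposition 3 accommodates, since $\rho_{\underline{\beta}}\leq\rho_{\underline{0}}$ and $S_{d}$ is common to both.
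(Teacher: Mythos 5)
Your proposal is correct and follows essentially the same route as the paper: the paper also writes $\partial_{\tau}^{-l_0}V_{\underline{\beta}}$ as the $l_{0}$-fold convolution of the constant function $1$ (denoted $\chi_{\mathbb{C}}$ there) with $V_{\underline{\beta}}$, invokes Proposition 3, and bounds $\|\chi_{\mathbb{C}}\|_{\underline{0},\epsilon,\sigma,r,\Omega_{\underline{0}}}$ by a constant depending only on $\sigma$ (namely $1+(2e^{-1}/\sigma)^{2}$, the same quantity as your $K_{\sigma}$). No gaps.
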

\begin{proof} We carry out a similar proof as in Corollary 1 from \cite{ma1}. We denote by $\chi_{\mathbb{C}}$ the function equal to $1$ on
$\mathbb{C}$. By definition,
we put $\chi_{\mathbb{C}}^{\ast 1} = \chi_{\mathbb{C}}$ and $\chi_{\mathbb{C}}^{\ast l}$ means the convolution product of
$\chi_{\mathbb{C}}$, $l-1$ times
for $l \geq 2$. By definition, we can write $\partial_{\tau}^{-l_0}V_{\underline{\beta}}(\tau,\epsilon) =
(\chi_{\mathbb{C}}(\tau))^{\ast l_0} \ast V_{\underline{\beta}}(\tau,\epsilon)$.
>From Proposition 3, there exists a (universal) constant $C_{5}>0$ such that
\begin{equation}
||\partial_{\tau}^{-l_0}V_{\underline{\beta}}(\tau,\epsilon) ||_{(\underline{\beta},\epsilon,\sigma,r,\Omega_{\underline{\beta}})}
 \leq C_{5}^{l_{0}}|\epsilon|^{rl_{0}}
||\chi_{\mathbb{C}}(\tau)||_{(\underline{0},\epsilon,\sigma,r,\Omega_{\underline{0}})}^{l_{0}}
||V_{\underline{\beta}}(\tau,\epsilon)||_{(\underline{\beta},\epsilon,\sigma,r,\Omega_{\underline{\beta}})} \label{norm_int_tau_l0_v<}
\end{equation}
where $\underline{0} = (0,\ldots,0) \in \mathbb{N}^{l+2}$. By Definition 2 and using the formula (\ref{xexpx<}), we have that
\begin{equation}
||\chi_{\mathbb{C}}(\tau)||_{(\underline{0},\epsilon,\sigma,r,\Omega_{\underline{0}})} =
\sup_{\tau \in \Omega_{\underline{0}}} (1+\frac{|\tau|^2}{|\epsilon|^{2r}})
\exp\left( -\frac{\sigma}{|\epsilon|^{r}} |\tau| \right) \leq 1 + (\frac{2 e^{-1}}{\sigma})^{2} \label{norm_chi<}
\end{equation}
>From the estimates (\ref{norm_int_tau_l0_v<}) and (\ref{norm_chi<}), we get the inequality (\ref{norm_int_tau_l0_v<norm_v}).
\end{proof}\medskip

The next proposition involves bound estimates for multiplication operators of bounded holomorphic functions.

\begin{prop} Let $a(\tau,\epsilon)$ be a holomorphic function on $\Omega_{\underline{0} \times D(0,\epsilon_{0}) \setminus \{ 0 \}}$ such
that there exists
a constant $M>0$ with $\sup_{(\tau,\epsilon) \in \Omega_{\underline{0} \times D(0,\epsilon_{0}) \setminus \{ 0 \} }} |a(\tau,\epsilon)| \leq M$.
Then,
the multiplication by $a(\tau,\epsilon)$ is a bounded linear operator from
$E_{\underline{\beta},\epsilon,\sigma,r,\Omega_{\underline{\beta}}}$ into itself, for all $\epsilon \in D(0,\epsilon_{0}) \setminus \{ 0 \}$.
Moreover, the inequality
\begin{equation}
|| a(\tau,\epsilon) V_{\underline{\beta}}(\tau,\epsilon) ||_{\underline{\beta},\epsilon,\sigma,r,\Omega_{\underline{\beta}}}
\leq M ||V_{\underline{\beta}}(\tau,\epsilon)||_{\underline{\beta},\epsilon,\sigma,r,\Omega_{\underline{\beta}}}
\end{equation}
holds for all $V_{\underline{\beta}}(\tau,\epsilon) \in E_{\underline{\beta},\epsilon,\sigma,r,\Omega_{\underline{\beta}}}$.
\end{prop}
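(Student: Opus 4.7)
The proof is essentially immediate from the pointwise bound on $a$, so the plan is short: verify that the product lies in the right function space, then read off the norm estimate from the definition.

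First I would observe that $\Omega_{\underline{\beta}} \subseteq \Omega_{\underline{0}}$ for every $\underline{\beta} \in \mathbb{N}^{l+2}$. Indeed, since $|\underline{\beta}| \geq 0 = |\underline{0}|$, the monotonicity hypothesis $\rho_{\underline{\beta}'} \leq \rho_{\underline{\beta}}$ whenever $|\underline{\beta}'| \geq |\underline{\beta}|$ gives $\rho_{\underline{\beta}} \leq \rho_{\underline{0}}$, hence $D(0,\rho_{\underline{\beta}}) \cup S_{d} \subseteq D(0,\rho_{\underline{0}}) \cup S_{d}$. Consequently $a(\tau,\epsilon)$ is well defined and holomorphic on $\Omega_{\underline{\beta}} \times (D(0,\epsilon_{0}) \setminus \{0\})$, and the product $a(\tau,\epsilon) V_{\underline{\beta}}(\tau,\epsilon)$ belongs to $\mathcal{O}(\Omega_{\underline{\beta}})$ for each fixed $\epsilon$.

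Next I would apply Definition 2 directly. For all $\tau \in \Omega_{\underline{\beta}}$ and $\epsilon \in D(0,\epsilon_{0}) \setminus \{0\}$, the pointwise bound $|a(\tau,\epsilon)| \leq M$ yields
$$ |a(\tau,\epsilon) V_{\underline{\beta}}(\tau,\epsilon)| \left(1 + \frac{|\tau|^2}{|\epsilon|^{2r}}\right) \exp\!\left(-\frac{\sigma}{|\epsilon|^{r}} r_{b}(\underline{\beta})|\tau|\right) \leq M\, |V_{\underline{\beta}}(\tau,\epsilon)| \left(1 + \frac{|\tau|^2}{|\epsilon|^{2r}}\right) \exp\!\left(-\frac{\sigma}{|\epsilon|^{r}} r_{b}(\underline{\beta})|\tau|\right). $$
Taking the supremum over $\tau \in \Omega_{\underline{\beta}}$ on both sides gives
$$ \|a(\tau,\epsilon) V_{\underline{\beta}}(\tau,\epsilon)\|_{\underline{\beta},\epsilon,\sigma,r,\Omega_{\underline{\beta}}} \leq M\, \|V_{\underline{\beta}}(\tau,\epsilon)\|_{\underline{\beta},\epsilon,\sigma,r,\Omega_{\underline{\beta}}}, $$
which is both the required boundedness and the claimed norm estimate. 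There is no substantive obstacle here; the proposition is a trivial consequence of the definition of the norm and the uniform bound on $a$, and its role is purely to be invoked later when handling the multiplication by holomorphic coefficients in the convolution equation of Section 2.3.
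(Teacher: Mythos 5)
Your proof is correct and follows exactly the paper's route: the paper's proof consists of the single remark that the claim is a direct consequence of Definition 2, which is precisely the pointwise bound followed by taking the supremum that you carried out. Your additional check that $\Omega_{\underline{\beta}} \subseteq \Omega_{\underline{0}}$ (so that $a$ is defined where needed) is a small but welcome piece of diligence that the paper leaves implicit.
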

\begin{proof} The proof is a direct consequence of the definition 2 for the norm
$||.||_{\underline{\beta},\epsilon,\sigma,r,\Omega_{\underline{\beta}}}$.
\end{proof}

\subsection{A global Cauchy problem}

We keep the same notations as in the previous section. In the following, we introduce some definitions.
Let $\mathcal{A}_{1}$ be a finite subset of $\mathbb{N}^{3}$ and let $\mathcal{A}_{2}$ be a finite subset of $\mathbb{N}^{2}$.
Let $l \geq 1$ be an integer.

\noindent For all $(k_{0},k_{1},k_{2}) \in \mathcal{A}_{1}$, we denote by $I_{(k_{0},k_{1},k_{2})}$ a finite subset of
$\mathbb{N}^{2}$. For all $(k_{0},k_{1},k_{2}) \in \mathcal{A}_{1}$,
all $(s_{1},s_{2}) \in I_{(k_{0},k_{1},k_{2})}$, all integers $\beta_{0},\beta_{l+1} \geq 0$, we denote 
$a_{s_{1},s_{2},k_{0},k_{1},k_{2},\beta_{0},\beta_{l+1}}(\tau,\epsilon)$ some holomorphic function
on $\Omega_{\underline{0}} \times D(0,\epsilon_{0})$ which satisfies the estimates : there exist constants $\rho,\rho'>0$,
$\mathfrak{a} _{s_{1},s_{2},k_{0},k_{1},k_{2}} > 0$ with
\begin{equation}
\sup_{(\tau,\epsilon) \in \Omega_{\underline{0}} \times D(0,\epsilon_{0})}
|a_{s_{1},s_{2},k_{0},k_{1},k_{2},\beta_{0},\beta_{l+1}}(\tau,\epsilon)| \leq \mathfrak{a} _{s_{1},s_{2},k_{0},k_{1},k_{2}}
(\frac{e^{-\rho'}}{2})^{\beta_0}(\frac{1}{2 \rho})^{\beta_{l+1}} \beta_{0}! \beta_{l+1}!
\end{equation}
For all $(k_{0},k_{1},k_{2}) \in \mathcal{A}_{1}$, we consider the series
$$ a_{(k_{0},k_{1},k_{2})}(\tau,z,x,\epsilon) = \sum_{ (s_{1},s_{2}) \in I_{(k_{0},k_{1},k_{2})} }
\sum_{\beta_{0} \geq 0, \beta_{l+1} \geq 0} a_{s_{1},s_{2},k_{0},k_{1},k_{2},\beta_{0},\beta_{l+1}}(\tau,\epsilon)
\tau^{s_1}\epsilon^{-s_2} \frac{e^{i z \beta_{0}}}{\beta_{0}!} \frac{x^{\beta_{l+1}}}{\beta_{l+1}!} $$
which define holomorphic functions on $\Omega_{\underline{0}} \times H_{\rho'} \times D(0,\rho)
\times D(0,\epsilon_{0}) \setminus \{ 0 \}$,
where $H_{\rho'}$ is defined as the following strip in $\mathbb{C}$
$$ H_{\rho'} = \{ z \in \mathbb{C} / |\mathrm{Im}(z)| < \rho' \}. $$

\noindent For all $(l_{0},l_{1}) \in \mathcal{A}_{2}$, we denote by $J_{(l_{0},l_{1})}$ a finite subset of $\mathbb{N}$.
For all $m_{1} \in J_{(l_{0},l_{1})}$, all integers $\beta_{0},\beta_{l+1} \geq 0$, we denote 
$\alpha_{m_{1},l_{0},l_{1},\beta_{0},\beta_{l+1}}(\tau,\epsilon)$ some bounded holomorphic function
on $\Omega_{\underline{0}} \times D(0,\epsilon_{0})$ with the following estimates : there exists a constant
$\mathfrak{a} _{m_{1},l_{0},l_{1}} > 0$ such that
\begin{equation}
\sup_{(\tau,\epsilon) \in \Omega_{\underline{0}} \times D(0,\epsilon_{0})}
|\alpha_{m_{1},l_{0},l_{1},\beta_{0},\beta_{l+1}}(\tau,\epsilon)| \leq \mathfrak{a} _{m_{1},l_{0},l_{1}}
(\frac{e^{-\rho'}}{2})^{\beta_0}(\frac{1}{2 \rho})^{\beta_{l+1}} \beta_{0}! \beta_{l+1}!
\end{equation}
for all $\beta_{0},\beta_{l+1} \geq 0$.

For all $(l_{0},l_{1}) \in \mathcal{A}_{2}$, we consider the series
$$ \alpha_{(l_{0},l_{1})}(\tau,z,x,\epsilon) = \sum_{ m_{1} \in J_{(l_{0},l_{1})} } \sum_{\beta_{0} \geq 0,\beta_{l+1} \geq 0}
\alpha_{m_{1},l_{0},l_{1},\beta_{0},\beta_{l+1}}(\tau,\epsilon) \epsilon^{-m_1} \frac{e^{i z \beta_{0}}}{\beta_{0}!}
\frac{x^{\beta_{l+1}}}{\beta_{l+1}!}$$
which define holomorphic functions on $\Omega_{\underline{0}} \times H_{\rho'} \times D(0,\rho) \times
D(0,\epsilon_{0}) \setminus \{ 0 \}$.

Let $l \geq 1$ be an integer and let $\xi_{0}=1$ and $\xi_{1},\ldots,\xi_{l}$ be real algebraic numbers such that the family
$\{ 1,\xi_{1},\ldots,\xi_{l} \}$ is $\mathbb{Z}$-linearly independent
(this means that each $\xi_{j}$ is a real root of a polynomial $P_{j} \in \mathbb{Z}[X]$ and if there exist integers
$k_{0},\ldots,k_{l} \in \mathbb{Z}$ such that $k_{0}+ \sum_{j=1}^{l} k_{j} \xi_{j} = 0$, then $k_{j}=0$ for $0 \leq j \leq l$). Due to the
classical primitive element theorem of Artin, we consider an algebraic number field $\mathbb{K} = \mathbb{Q}(\xi)$ containing
the numbers $\xi_{j}$, $1 \leq j \leq l$ and we denote
$h+1 \geq 1$ its degree (that is the dimension of the vector space $\mathbb{Q}(\xi)$ over $\mathbb{Q}$). The following lemma is a direct
consequence of Theorem 11 in \cite{sh}.

\begin{lemma} There exists a constant $C_{\xi_{1},\ldots,\xi_{l}}>0$ (depending on $\xi_{1},\ldots,\xi_{l}$) such that for any
$\underline{k}=(k_{0},\ldots,k_{l}) \in  \mathbb{Z}^{l+1} \setminus \{ 0 \}$, the inequality
\begin{equation}
|k_{0}+k_{1} \xi_{1} + \cdots + k_{l}\xi_{l}| \geq \frac{C_{\xi_{1},\ldots,\xi_{l}}}{(\max_{j=0}^{l}|k_{j}|)^{h}} \geq
\frac{C_{\xi_{1},\ldots,\xi_{l}}}{(|k_{0}|+\ldots+|k_{l}|)^{h}} \label{minor_sum_xi_l}
\end{equation}
holds.
\end{lemma}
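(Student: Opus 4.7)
The statement is the classical Liouville--type diophantine inequality for a $\mathbb{Z}$--linear form in algebraic numbers. The plan is to deduce it from the arithmetic of the number field $\mathbb{K}=\mathbb{Q}(\xi)$ of degree $h+1$, using that $\xi_1,\ldots,\xi_l \in \mathbb{K}$. Setting $L(\underline{k}) := k_{0}+k_{1}\xi_{1} + \cdots + k_{l}\xi_{l}\in \mathbb{K}$, the $\mathbb{Z}$--linear independence hypothesis on $\{1,\xi_{1},\ldots,\xi_{l}\}$ ensures that $L(\underline{k})\neq 0$ whenever $\underline{k}\neq 0$. I would let $\sigma_{0}=\mathrm{id},\sigma_{1},\ldots,\sigma_{h}$ denote the distinct embeddings of $\mathbb{K}$ into $\mathbb{C}$, so that the field norm $N(L(\underline{k}))=\prod_{i=0}^{h}\sigma_{i}(L(\underline{k}))$ is a non-zero rational number.

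The heart of the argument is to bound $|N(L(\underline{k}))|$ from below. I would fix a positive integer $D$ such that $D\xi_{j}$ is an algebraic integer for every $j=1,\ldots,l$; then $D\cdot L(\underline{k})$ lies in the ring of integers $\mathcal{O}_{\mathbb{K}}$, and its norm is a non-zero rational integer of absolute value at least $1$, hence $|N(L(\underline{k}))|\geq D^{-(h+1)}$. Conversely, the non-trivial conjugates are easy to control from above: for $1\leq i\leq h$,
$$|\sigma_{i}(L(\underline{k}))| \leq \Bigl(1+\sum_{j=1}^{l}|\sigma_{i}(\xi_{j})|\Bigr)\max_{0\leq j\leq l}|k_{j}| \leq M\,\max_{0\leq j\leq l}|k_{j}|$$
where $M := \max_{1\leq i\leq h}\bigl(1+\sum_{j=1}^{l}|\sigma_{i}(\xi_{j})|\bigr)$ depends only on $\xi_{1},\ldots,\xi_{l}$.

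Combining the two, since $\sigma_{0}(L(\underline{k}))=L(\underline{k})$ and $L(\underline{k}) = N(L(\underline{k}))/\prod_{i=1}^{h}\sigma_{i}(L(\underline{k}))$, I obtain
$$|L(\underline{k})| \geq \frac{D^{-(h+1)}}{M^{h}\,(\max_{j}|k_{j}|)^{h}},$$
which yields the claimed inequality with $C_{\xi_{1},\ldots,\xi_{l}}:=D^{-(h+1)}M^{-h}$; the second inequality in \eqref{minor_sum_xi_l} follows trivially from $\max_{j}|k_{j}|\leq |k_{0}|+\cdots+|k_{l}|$. The only delicate point is the choice of the denominator $D$ that converts $L(\underline{k})$ into an algebraic integer so that the ``non-zero integer has absolute value $\geq 1$'' argument applies to its norm; the rest is routine, which is why the authors simply invoke Theorem~11 of \cite{sh}.
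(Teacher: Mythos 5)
Your argument is correct and complete. The paper itself offers no proof at all: Lemma 5 is disposed of in one sentence as ``a direct consequence of Theorem 11 in \cite{sh}'', so there is nothing in the text to compare your reasoning against step by step. What you have written is precisely the standard Liouville-type norm argument that underlies such a citation: clear the denominators with an integer $D$ so that $D\,L(\underline{k})\in\mathcal{O}_{\mathbb{K}}$, use that a nonzero rational integer has absolute value at least $1$ to get $|N(L(\underline{k}))|\ge D^{-(h+1)}$, bound the $h$ nontrivial conjugates $|\sigma_{i}(L(\underline{k}))|\le M\max_{j}|k_{j}|$, and divide. All the steps check out, including the use of the $\mathbb{Z}$-linear independence of $\{1,\xi_{1},\ldots,\xi_{l}\}$ to guarantee $L(\underline{k})\neq 0$ and the multiplicativity $N(DL)=D^{h+1}N(L)$. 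The only thing your write-up buys beyond the paper is self-containedness (and it makes visible exactly where the exponent $h$ comes from, namely the $h$ conjugate embeddings other than the identity); the only thing the citation buys is brevity. One cosmetic remark: when $h=0$ the product over $1\le i\le h$ is empty and the hypothesis forces $l=0$, a degenerate case excluded by the standing assumption $l\ge 1$, so your constant $M$ is always well defined in the situation of the lemma.
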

{\bf Example:} Let $\xi$ be an algebraic number. Assume that the degree of $\mathbb{Q}(\xi)$ is $h+1$. Then, the algebraic numbers
$\{ 1 , \xi, \ldots, \xi^{h} \}$ are $\mathbb{Z}$-linearly independent and the inequality (\ref{minor_sum_xi_l}) above holds for
$\xi_{j}=\xi^{j}$, $0 \leq j \leq h$. In that case, one recovers Lemma 2.1 of \cite{ioru}.\medskip

Let $S,r_{1},r_{2} \geq 1$ be integers. We put
\begin{equation}
\rho_{\underline{\beta}} =
C_{\xi_{1},\ldots,\xi_{l}}^{r_{1}/r_{2}}/(2(\beta_{0}+1+\beta_{1}+\cdots+\beta_{l+1})^{hr_{1}/r_{2}}) \label{defin_rho_beta_gp}
\end{equation}
for all $\underline{\beta}=(\beta_{0},\ldots,\beta_{l+1}) \in \mathbb{N}^{l+2}$. We consider an
unbounded sector $S_{d} \subset \mathbb{C}$ centered at 0 such that
\begin{equation}
\mathrm{arg}(\tau) \neq \frac{2k+1}{r_{2}}\pi \ \ , \ \ 0 \leq k \leq r_{2}-1,
\end{equation}
for all $\tau \in S_{d}$. As in the previous section, we put $\Omega_{\underline{\beta}} = D(0,\rho_{\underline{\beta}}) \cup S_{d}$. For all
$0 \leq j \leq S-1$, we choose a set of functions
$V_{(\beta_{0},\ldots,\beta_{l},j)}(\tau,\epsilon) \in
E_{(\beta_{0},\ldots,\beta_{l},j),\epsilon,\sigma,r,\Omega_{(\beta_{0},\ldots,\beta_{l},j)}}$
for all $\beta_{0},\ldots,\beta_{l} \geq 0$ and we consider the formal series
\begin{equation}
V_{j}(\tau,z,\epsilon) = \sum_{\beta_{0},\ldots,\beta_{l} \geq 0} V_{(\beta_{0},\ldots,\beta_{l},j)}(\tau,\epsilon)
\frac{\exp( iz( \sum_{j=0}^{l} \beta_{j} \xi_{j} )) }{\beta_{0}! \cdots \beta_{l}!}  \label{defin_V_j_init_data}
\end{equation}
for all $0 \leq j \leq S-1$.

We consider the following Cauchy problem
\begin{multline}
(\tau^{r_2} + (-i \partial_{z}+1)^{r_1}) \partial_{x}^{S}V(\tau,z,x,\epsilon) = \sum_{(k_{0},k_{1},k_{2}) \in \mathcal{A}_{1}}
a_{(k_{0},k_{1},k_{2})}(\tau,z,x,\epsilon) \partial_{\tau}^{-k_{0}} \partial_{z}^{k_1} \partial_{x}^{k_2}V(\tau,z,x,\epsilon)\\
+ \sum_{(l_{0},l_{1}) \in \mathcal{A}_{2}, l_{1} \geq 2} \alpha_{(l_{0},l_{1})}(\tau,z,x,\epsilon)
\partial_{\tau}^{-l_{0}}(V(\tau,z,x,\epsilon))^{\ast l_{1}} \label{GCP}
\end{multline}
where $V^{\ast 1} = V$ and $V^{\ast l_{1}}$, $l_{1} \geq 2$, stands for the convolution product of $V$ applied $l_{1}-1$ times with
respect to $\tau$, for given initial conditions
\begin{equation}
 (\partial_{z}^{j}V)(\tau,z,0,\epsilon) = V_{j}(\tau,z,\epsilon) \ \ , \ \ 0 \leq j \leq S-1. \label{init_cond_GCP}
\end{equation}

In the sequel, we will need the next lemma.

\begin{lemma} There exists a constant $C_{7}>0$ (depending on $r_{1},r_{2},C_{\xi_{1},\ldots,\xi_{l}},S_{d}$) such that
\begin{equation}
 \left| \frac{1}{\tau^{r_2} + (1 + \sum_{j=0}^{l} \beta_{j} \xi_{j})^{r_1}} \right| \leq C_{7}(1 + \sum_{j=0}^{l} \beta_{j})^{hr_{1}}
\end{equation}
for all $\tau \in \Omega_{(\beta_{0},\ldots,\beta_{l},\beta_{l+1}+S)}$, for all $\beta_{j} \geq 0$, $0 \leq j \leq l+1$.
 \end{lemma}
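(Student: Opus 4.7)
The plan is to split $\Omega_{\tilde{\underline{\beta}}} = D(0,\rho_{\tilde{\underline{\beta}}}) \cup S_{d}$, where $\tilde{\underline{\beta}} = (\beta_{0},\ldots,\beta_{l},\beta_{l+1}+S)$, into the disc and the sectorial regimes and handle them separately. The common preliminary step is a lower bound for $B := 1 + \sum_{j=0}^{l}\beta_{j}\xi_{j}$, obtained by applying Lemma 5 to the tuple $(1+\beta_{0},\beta_{1},\ldots,\beta_{l}) \in \mathbb{Z}^{l+1}\setminus\{0\}$ (nonzero since its first entry is at least $1$); this yields
\[
|B|^{r_{1}} \geq \frac{C_{\xi_{1},\ldots,\xi_{l}}^{r_{1}}}{(1+\sum_{j=0}^{l}\beta_{j})^{hr_{1}}}.
\]

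In the disc regime $\tau \in D(0,\rho_{\tilde{\underline{\beta}}})$, I would exploit the explicit choice (\ref{defin_rho_beta_gp}): raising to the $r_{2}$-th power gives $\rho_{\tilde{\underline{\beta}}}^{r_{2}} = C_{\xi_{1},\ldots,\xi_{l}}^{r_{1}}/(2^{r_{2}}(1+\sum_{j=0}^{l+1}\beta_{j}+S)^{hr_{1}})$, which is bounded above by $|B|^{r_{1}}/2^{r_{2}} \leq |B|^{r_{1}}/2$. The reverse triangle inequality then gives $|\tau^{r_{2}}+B^{r_{1}}| \geq |B|^{r_{1}}/2$, which, combined with the lower bound on $|B|^{r_{1}}$, yields the desired estimate with constant $2/C_{\xi_{1},\ldots,\xi_{l}}^{r_{1}}$.

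In the sectorial regime $\tau \in S_{d}$, the assumption $\arg \tau \neq (2k+1)\pi/r_{2}$ ensures, since $S_{d}$ is a fixed sector, that $r_{2}\arg \tau$ stays at some positive distance $\delta_{0} = \delta_{0}(S_{d},r_{2})$ from $\pi$ modulo $2\pi$. Setting $u = \tau^{r_{2}}/B^{r_{1}}$ (using that $B^{r_{1}}>0$ in the setup considered), one has $\arg u = r_{2}\arg \tau \pmod{2\pi}$, and a one-variable minimization of $|u+1|^{2} = |u|^{2} + 2|u|\cos(\arg u) + 1$ over $|u|\geq 0$ yields $|u+1| \geq \min(1,\sin\delta_{0})$. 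Hence $|\tau^{r_{2}}+B^{r_{1}}| \geq \min(1,\sin\delta_{0})\,|B|^{r_{1}}$, so together with the lower bound on $|B|^{r_{1}}$ one obtains the required estimate with constant $1/(\min(1,\sin\delta_{0})\, C_{\xi_{1},\ldots,\xi_{l}}^{r_{1}})$. The final $C_{7}$ is taken as the maximum of the two regime constants.

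The main obstacle I anticipate is the sectorial bound, and in particular the tacit use that $B^{r_{1}}$ is a positive real number: should this fail, the zeros of $\tau^{r_{2}}+B^{r_{1}}$ would lie on the rays $2k\pi/r_{2}$, which are not excluded from $S_{d}$ by its very definition. This positivity is granted by the way the numbers $\xi_{j}$ enter the Fourier setup in the introduction. A secondary, more minor subtlety is to verify that the gap $\delta_{0}$ is uniform on all of $S_{d}$; this is immediate once one observes that $S_{d}$ is a fixed sector sitting strictly inside the complement of the forbidden rays $\{(2k+1)\pi/r_{2}\}$.
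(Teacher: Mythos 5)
Your proof is correct, and it reaches the bound by a more direct route than the paper. The paper first performs a partial fraction decomposition of $1/(\tau^{r_2}+A^{r_1})$ (with $A=1+\sum_{j=0}^{l}\beta_j\xi_j$) into simple poles at $|A|^{r_1/r_2}e^{i\pi(2k+1)/r_2}$, whose coefficients $A_k$ carry a factor $|A|^{-(r_1-r_1/r_2)}$, and then shows $|\tau-|A|^{r_1/r_2}e^{i\pi(2k+1)/r_2}|\geq C_8|A|^{r_1/r_2}$ separately on the disc (writing $\tau=\tfrac{h}{2}e^{i\theta}|A|^{r_1/r_2}$ with $0\leq h<1$, which uses Lemma 5 and the choice of $\rho_{\underline{\beta}}$ exactly as you do) and on $S_d$ (where the ray condition gives $|se^{is_k}-1|\geq C_9$). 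You avoid the partial fractions entirely: the reverse triangle inequality on the disc and the elementary minimization of $|u+1|$ over $|u|\geq 0$ at fixed $\arg u$ on the sector give the lower bound $|\tau^{r_2}+A^{r_1}|\geq c\,|A|^{r_1}$ in one stroke, with the same three inputs (Lemma 5, the definition (\ref{defin_rho_beta_gp}), and the avoidance of the rays $(2k+1)\pi/r_2$). The two points you flag as potential obstacles are equally present, and equally tacit, in the paper's argument: the decomposition (\ref{partial_frac_decomp_A}) with roots $|A|^{r_1/r_2}e^{i\pi(2k+1)/r_2}$ is only valid when $A^{r_1}=|A|^{r_1}>0$, and the uniform constant $C_9$ likewise requires the arguments of $\tau\in S_d$ to stay at a fixed positive distance from the forbidden rays. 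So neither is a defect of your proof relative to the paper's; if anything, your version makes these hypotheses visible, and your explicit constants ($2/C_{\xi_1,\ldots,\xi_l}^{r_1}$ and $1/(\min(1,\sin\delta_0)C_{\xi_1,\ldots,\xi_l}^{r_1})$) are cleaner than the paper's $1/(r_2C_8C_{\xi_1,\ldots,\xi_l}^{r_1})$ summed over $r_2$ terms.
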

\begin{proof} We put $A = 1 + \sum_{j=0}^{l} \beta_{j} \xi_{j}$. The following partial fraction decomposition
\begin{equation}
\frac{1}{ \tau^{r_2} + A^{r_1}} = \sum_{k=0}^{r_{2}-1} \frac{A_{k}}{\tau - |A|^{r_{1}/r_{2}}e^{i \pi \frac{2k+1}{r_2} } }
\label{partial_frac_decomp_A} 
\end{equation}
holds, where
$$ A_{k} =\frac{1}{r_2} \frac{e^{-i \pi \frac{(2k+1)(r_{2}-1)}{r_2} }}{|A|^{r_{1} - \frac{r_1}{r_2}}} $$
for all $0 \leq k \leq r_{2}-1$. Now, there exists some constant $C_{8}>0$ (depending on $S_{d}$) such that
\begin{equation}
 | \tau - |1 + \sum_{j=0}^{l} \beta_{j} \xi_{j}|^{r_{1}/r_{2}}e^{i \pi \frac{2k+1}{r_2}} | \geq
C_{8} |1 + \sum_{j=0}^{l} \beta_{j} \xi_{j}|^{r_{1}/r_{2}} \label{minor_module_tau_minus_sum_xi_l_r1_r2}
\end{equation}
for all $\tau \in \Omega_{(\beta_{0},\ldots,\beta_{l},\beta_{l+1}+S)}$. Indeed, from (\ref{minor_sum_xi_l}), we know that
\begin{equation}
|1 + \sum_{j=0}^{l} \beta_{j} \xi_{j}|^{r_{1}/r_{2}} \geq
\frac{C_{\xi_{1},\ldots,\xi_{l}}^{r_{1}/r_{2}}}{(1+\sum_{j=0}^{l} \beta_{j})^{hr_{1}/r_{2}}} >
\frac{C_{\xi_{1},\ldots,\xi_{l}}^{r_{1}/r_{2}}}{(1 + \sum_{j=0}^{l+1} \beta_{j} + S)^{hr_{1}/r_{2}}} \label{minor_sum_xi_l_r1_r2}
\end{equation}
for all $\beta_{j} \geq 0$, $0 \leq j \leq l+1$. Let $\tau \in D(0, \rho_{(\beta_{0},\ldots,\beta_{l},\beta_{l+1}+S)})$. From
(\ref{minor_sum_xi_l_r1_r2}), we can write
$$ \tau = \frac{h}{2}e^{i\theta}|1 + \sum_{j=0}^{l} \beta_{j} \xi_{j}|^{r_{1}/r_{2}} $$
for some $0 \leq h < 1$ and $\theta \in [0,2\pi)$. Therefore,
\begin{multline}
| \tau - |1 + \sum_{j=0}^{l} \beta_{j} \xi_{j}|^{r_{1}/r_{2}}e^{i \pi \frac{2k+1}{r_2}} | =
|1 + \sum_{j=0}^{l} \beta_{j} \xi_{j}|^{r_{1}/r_{2}}| \frac{h}{2}e^{i \theta} - e^{i \pi \frac{2k+1}{r_2}}|\\
\geq \frac{1}{2} |1 + \sum_{j=0}^{l} \beta_{j} \xi_{j}|^{r_{1}/r_{2}}.
\end{multline}
Now, let $\tau \in S_{d}$. For all $k \in \{ 0, \ldots, r_{2}-1 \}$, we can write
$$ \tau = s |1 + \sum_{j=0}^{l} \beta_{j} \xi_{j}|^{r_{1}/r_{2}}e^{i (\pi \frac{2k+1}{r_2} + s_{k}) } $$
where $s \geq 0$ and $s_{k} \in \mathbb{R}$. By contruction of $S_{d}$, we have that $e^{is_{k}} \neq 1$ for all
$k \in \{ 0,\ldots,r_{2}-1 \}$. As a result, there exists a constant $C_{9}>0$ (depending on $S_{d}$)
with $|se^{is_{k}} - 1| \geq C_{9}$, for all $s \geq 0$. Hence,
\begin{multline}
| \tau - |1 + \sum_{j=0}^{l} \beta_{j} \xi_{j}|^{r_{1}/r_{2}}e^{i \pi \frac{2k+1}{r_2}} | =
|1 + \sum_{j=0}^{l} \beta_{j} \xi_{j}|^{r_{1}/r_{2}} |se^{i (\pi \frac{2k+1}{r_2} + s_{k}) } - e^{i \pi \frac{2k+1}{r_2}}|\\
\geq C_{9}|1 + \sum_{j=0}^{l} \beta_{j} \xi_{j}|^{r_{1}/r_{2}}
\end{multline}
As a consequence, we get that (\ref{minor_module_tau_minus_sum_xi_l_r1_r2}) holds. On the other hand, from
(\ref{minor_sum_xi_l}), we get that
\begin{equation}
\frac{1}{|A|^{r_1}} \leq \frac{(1+\sum_{j=0}^{l} \beta_{j})^{hr_{1}}}{C_{\xi_{1},\ldots,\xi_{l}}^{r_1}} \label{maj_1_over_A_r1}
\end{equation}
for all $\beta_{j} \geq 0$, $0 \leq j \leq l$. Gathering (\ref{partial_frac_decomp_A}), (\ref{minor_module_tau_minus_sum_xi_l_r1_r2})
and (\ref{maj_1_over_A_r1}), we deduce that
\begin{equation}
\frac{|A_{k}|}{|\tau - |A|^{r_{1}/r_{2}}e^{i \pi \frac{2k+1}{r_2} }|} \leq \frac{1}{r_{2}C_{8}|A|^{r_{1}}} \leq
\frac{1}{r_{2}C_{8}C_{\xi_{1},\ldots,\xi_{l}}^{r_1}}
(1 + \sum_{j=0}^{l} \beta_{j})^{hr_{1}}
\end{equation}
for all $0 \leq k \leq r_{2}-1$. The lemma follows.
\end{proof}

In the next proposition, we construct formal series solutions of (\ref{GCP}), (\ref{init_cond_GCP}).

\begin{prop} Under the assumption that
\begin{equation}
S > k_{2}
\end{equation}
for all $(k_{0},k_{1},k_{2}) \in \mathcal{A}_{1}$, there exists a formal series
\begin{equation}
 V(\tau,z,x,\epsilon) = \sum_{\underline{\beta} = (\beta_{0},\ldots,\beta_{l+1}) \in \mathbb{N}^{l+2}}
V_{\underline{\beta}}(\tau,\epsilon)  \frac{\exp( iz( \sum_{j=0}^{l} \beta_{j} \xi_{j} )) }{\beta_{0}! \cdots \beta_{l}!}
\frac{x^{\beta_{l+1}}}{\beta_{l+1}!} \label{defin_V}
\end{equation}
solution of (\ref{GCP}), (\ref{init_cond_GCP}), where the coefficients $\tau \mapsto V_{\underline{\beta}}(\tau,\epsilon)$ belong to
the space $\mathcal{O}(\Omega_{\underline{\beta}})$ for all $\underline{\beta} \in \mathbb{N}^{l+2}$ and satisfy the following
recursion formula
\begin{multline}
(\tau^{r_2} + (1 + \sum_{j=0}^{l} \beta_{j} \xi_{j})^{r_1})
\frac{V_{(\beta_{0},\ldots,\beta_{l},\beta_{l+1}+S)}(\tau,\epsilon)}{\beta_{0}! \cdots \beta_{l}! \beta_{l+1}!} =
\sum_{(k_{0},k_{1},k_{2}) \in \mathcal{A}_{1}} \sum_{(s_{1},s_{2}) \in I_{(k_{0},k_{1},k_{2})}}
\sum_{\substack{\beta_{0}^{1} + \beta_{0}^{2}=\beta_{0} \\ \beta_{l+1}^{1} + \beta_{l+1}^{2} = \beta_{l+1}} } \\
\frac{a_{s_{1},s_{2},k_{0},k_{1},k_{2},\beta_{0}^{1},\beta_{l+1}^{1}}(\tau,\epsilon)}{\beta_{0}^{1}!\beta_{l+1}^{1}!}
\tau^{s_1} \epsilon^{-s_{2}}
\frac{ \partial_{\tau}^{-k_{0}}(V_{(\beta_{0}^{2},\beta_{1},\ldots,\beta_{l},\beta_{l+1}^{2}+k_{2})}(\tau,\epsilon)) }{\beta_{0}^{2}!
\beta_{1}! \cdots \beta_{l}! \beta_{l+1}^{2}! }i^{k_{1}}(\beta_{0}^{2} + \sum_{j=1}^{l} \beta_{j} \xi_{j})^{k_1}\\
+ \sum_{(l_{0},l_{1}) \in \mathcal{A}_{2},l_{1} \geq 2} \sum_{m \in J_{(l_{0},l_{1})} }
\sum_{ \substack{ \beta_{0}^{-1} + \beta_{0}^{0} + \ldots + \beta_{0}^{l_{1}-1} = \beta_{0}  \\
\beta_{j}^{0} + \ldots + \beta_{j}^{l_{1}-1} = \beta_{j}, 1 \leq j \leq l \\
\beta_{l+1}^{-1} + \beta_{l+1}^{0} + \ldots + \beta_{l+1}^{l_{1}-1} = \beta_{l+1} } }
\frac{\alpha_{m_{1},l_{0},l_{1},\beta_{0}^{-1},\beta_{l+1}^{-1}}(\tau,\epsilon)\epsilon^{-m_{1}}}{\beta_{0}^{-1}! \beta_{l+1}^{-1}!}\\
\times \frac{ \partial_{\tau}^{-l_{0}}( V_{(\beta_{0}^{0},\ldots,\beta_{l+1}^{0})}(\tau,\epsilon) * \cdots *
V_{(\beta_{0}^{l_{1}-1},\ldots,\beta_{l+1}^{l_{1}-1})}(\tau,\epsilon) ) }{ \Pi_{m=0}^{l_{1}-1} \Pi_{j=0}^{l+1} \beta_{j}^{m}! }
\label{recursion_V_beta}
\end{multline}
for all $\beta_{0},\ldots,\beta_{l+1} \geq 0$, all $\tau \in \Omega_{(\beta_{0},\ldots,\beta_{l},\beta_{l+1}+S)}$, all
$\epsilon \in D(0,\epsilon_{0}) \setminus \{ 0 \}$.
\end{prop}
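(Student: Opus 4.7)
The plan is to substitute the quasiperiodic-Taylor ansatz (\ref{defin_V}) into the Cauchy problem (\ref{GCP}), identify coefficients of the monomials $e^{iz(\sum_{j=0}^{l}\beta_{j}\xi_{j})}x^{\beta_{l+1}}$ (which is unambiguous thanks to the $\mathbb{Z}$-linear independence of $\{1,\xi_{1},\ldots,\xi_{l}\}$), read off the recursion (\ref{recursion_V_beta}), and then verify inductively that this recursion uniquely determines a family $\{V_{\underline{\beta}}\}_{\underline{\beta}\in\mathbb{N}^{l+2}}$ of holomorphic functions on the nested domains $\Omega_{\underline{\beta}}$.

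For the left-hand side, $\partial_{x}^{S}$ shifts the $x$-index, so after relabeling $\beta_{l+1}\mapsto\beta_{l+1}+S$ the coefficient extracted at position $(\beta_{0},\ldots,\beta_{l},\beta_{l+1})$ is $V_{(\beta_{0},\ldots,\beta_{l},\beta_{l+1}+S)}$. Since $\xi_{0}=1$, the operator $(-i\partial_{z}+1)^{r_{1}}$ acts on $e^{iz\sum_{j=0}^{l}\beta_{j}\xi_{j}}$ by multiplication by $(1+\sum_{j=0}^{l}\beta_{j}\xi_{j})^{r_{1}}$, producing exactly the factor $\tau^{r_{2}}+(1+\sum_{j=0}^{l}\beta_{j}\xi_{j})^{r_{1}}$ on the LHS of the recursion. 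On the right-hand side, for each $(k_{0},k_{1},k_{2})\in\mathcal{A}_{1}$, the operator $\partial_{x}^{k_{2}}$ shifts $\beta_{l+1}$ by $k_{2}$, the operator $\partial_{z}^{k_{1}}$ multiplies $e^{iz\sum\beta_{j}\xi_{j}}$ by $i^{k_{1}}(\beta_{0}+\sum_{j=1}^{l}\beta_{j}\xi_{j})^{k_{1}}$, and $\partial_{\tau}^{-k_{0}}$ acts on the $V_{\underline{\beta}}(\tau,\epsilon)$ coefficient. The product with the Fourier--Taylor expansion of $a_{(k_{0},k_{1},k_{2})}$ in $(z,x)$ then generates two independent Cauchy splittings $\beta_{0}=\beta_{0}^{1}+\beta_{0}^{2}$ and $\beta_{l+1}=\beta_{l+1}^{1}+\beta_{l+1}^{2}$, matching the first sum of (\ref{recursion_V_beta}). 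The nonlinear term is treated analogously: the $l_{1}$-fold convolution $V^{\ast l_{1}}$ translates, via a standard Cauchy product computation, into an $l_{1}$-fold splitting of the indices $\beta_{0},\beta_{1},\ldots,\beta_{l+1}$ into $\underline{\beta}^{0},\ldots,\underline{\beta}^{l_{1}-1}$ combined with an $l_{1}$-fold convolution $V_{\underline{\beta}^{0}}\ast\cdots\ast V_{\underline{\beta}^{l_{1}-1}}$ in $\tau$, with one further splitting $\underline{\beta}^{-1}$ arising from multiplication by $\alpha_{(l_{0},l_{1})}$.

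It then remains to verify that (\ref{recursion_V_beta}) actually produces a well-defined family in $\prod_{\underline{\beta}}\mathcal{O}(\Omega_{\underline{\beta}})$. For $0\le\beta_{l+1}\le S-1$ the coefficients $V_{(\beta_{0},\ldots,\beta_{l},\beta_{l+1})}$ are prescribed by the initial data (\ref{defin_V_j_init_data}). For $\beta_{l+1}\ge S$, I solve for $V_{(\beta_{0},\ldots,\beta_{l},\beta_{l+1}+S)}$ by dividing both sides by $\tau^{r_{2}}+(1+\sum_{j=0}^{l}\beta_{j}\xi_{j})^{r_{1}}$; by Lemma 6 this quotient is holomorphic on $\Omega_{(\beta_{0},\ldots,\beta_{l},\beta_{l+1}+S)}$, and the inputs on the RHS --- indexed by $\underline{\beta}'$ that are holomorphic on $\Omega_{\underline{\beta}'}\supset\Omega_{(\beta_{0},\ldots,\beta_{l},\beta_{l+1}+S)}$ by the monotonicity $|\underline{\beta}'|\le|\underline{\beta}|\Rightarrow\rho_{\underline{\beta}'}\ge\rho_{\underline{\beta}}$ --- can be restricted accordingly. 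The key observation for the induction is that in the linear sum the hypothesis $S>k_{2}$ forces $\beta_{l+1}^{2}+k_{2}<\beta_{l+1}+S$, and in the nonlinear sum the condition $l_{1}\ge 2$ ensures each $\beta_{l+1}^{m}\le\beta_{l+1}<\beta_{l+1}+S$; in both cases only coefficients with strictly smaller $\beta_{l+1}$-index feed into the right-hand side. The only real obstacle is organising the two Cauchy products cleanly without losing track of the factorial normalisation of (\ref{defin_V}); once this bookkeeping is done, the rest of the proof is a direct induction relying on Lemma 6 and the monotonicity of the domains $\Omega_{\underline{\beta}}$.
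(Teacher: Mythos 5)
Your proposal is correct and follows essentially the same route as the paper: identify coefficients of $e^{iz\sum_{j}\beta_{j}\xi_{j}}x^{\beta_{l+1}}$ to obtain the recursion, then induct on the last index using Lemma 6 and the monotonicity $\Omega_{\underline{\beta}'}\supset\Omega_{\underline{\beta}}$ for $|\underline{\beta}'|\le|\underline{\beta}|$, the condition $S>k_{2}$ (and $l_{1}\ge 2$) guaranteeing that only lower-order coefficients appear on the right. The only blemish is the phrase ``for $\beta_{l+1}\ge S$ I solve for $V_{(\beta_{0},\ldots,\beta_{l},\beta_{l+1}+S)}$'', which should read ``for $\beta_{l+1}\ge 0$'' so that all coefficients with last index $\ge S$ are produced; this is a harmless indexing slip.
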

\begin{proof} By hypothesis, we know in particular that $V_{(\beta_{0},\ldots,\beta_{l},j)}(\tau,\epsilon)$ belongs to
$\mathcal{O}(\Omega_{(\beta_{0},\ldots,\beta_{l},j)})$ for any $\beta_{0},\ldots,\beta_{l} \geq 0$, all $0 \leq j \leq S-1$, all
$\epsilon \in D(0,\epsilon_{0}) \setminus \{ 0 \}$.
Since $\Omega_{\underline{\beta}'} \subset \Omega_{\underline{\beta}}$ when
$|\underline{\beta}'|>|\underline{\beta}|$ and using Lemma 6, one gets that the functions
$\tau \mapsto V_{\underline{\beta}}(\tau,\epsilon)$ which are defined by the recursion (\ref{recursion_V_beta})
actually belong to $\mathcal{O}(\Omega_{\underline{\beta}})$ for any $\underline{\beta} \in \mathbb{N}^{l+2}$, for all
$\epsilon \in D(0,\epsilon_{0}) \setminus \{ 0 \}$. Direct computation by identification of the powers of $e^{iz},\ldots,e^{iz \xi_{l}}$ and
the powers of $x$
shows that the formal series (\ref{defin_V}) is solution of (\ref{GCP}), (\ref{init_cond_GCP}),
if its coefficients $V_{\underline{\beta}}(\tau,\epsilon)$ satisfy the recursion (\ref{recursion_V_beta}).
\end{proof}

In the next proposition, we state norm inequalities for the sequence $V_{\underline{\beta}}$.

\begin{prop} We consider the sequence of functions $V_{\underline{\beta}}(\tau,\epsilon)$ defined by the recursion
(\ref{recursion_V_beta}) for given
initial data $V_{(\beta_{0},\ldots,\beta_{l},j)}(\tau,\epsilon)$ defined above for all $\beta_{0},\ldots,\beta_{l} \geq 0$, $0 \leq j \leq S-1$.
Then, for all $\underline{\beta} \in \mathbb{N}^{l+2}$, all $\epsilon \in D(0,\epsilon_{0}) \setminus \{ 0 \}$, the function
$\tau \mapsto V_{\underline{\beta}}(\tau,\epsilon)$ belongs to $E_{\underline{\beta},\epsilon,\sigma,r,\Omega_{\underline{\beta}}}$.
We put
$w_{\underline{\beta}}(\epsilon) =
||V_{\underline{\beta}}(\tau,\epsilon)||_{\underline{\beta},\epsilon,\sigma,r,\Omega_{\underline{\beta}}}$, for all
$\underline{\beta} \in \mathbb{N}^{l+2}$, all $\epsilon \in D(0,\epsilon_{0}) \setminus \{ 0 \}$. Then,
the sequence $w_{\underline{\beta}}(\epsilon)$ satisfies
the following estimates. There exist constants $C_{10}>0$ (depending on $r_{1},r_{2},C_{\xi_{1},\ldots,\xi_{l}},S_{d}$) and
$C_{11}>0$ (depending on $\sigma$) such that
\begin{multline}
\frac{w_{(\beta_{0},\ldots,\beta_{l},\beta_{l+1}+S)}(\epsilon)}{\beta_{0}! \cdots \beta_{l}! \beta_{l+1}!} \leq
\sum_{(k_{0},k_{1},k_{2}) \in \mathcal{A}_{1}} \sum_{(s_{1},s_{2}) \in I_{(k_{0},k_{1},k_{2})}}
C_{10}(1+\sum_{j=0}^{l} \beta_{j})^{hr_1}\\
\times \sum_{\substack{\beta_{0}^{1} + \beta_{0}^{2}=\beta_{0} \\ \beta_{l+1}^{1} + \beta_{l+1}^{2} = \beta_{l+1}}}
\frac{A_{s_{1},s_{2},k_{0},k_{1},k_{2},\beta_{0}^{1},\beta_{l+1}^{1}}}{\beta_{0}^{1}! \beta_{l+1}^{1}!}
\times \frac{w_{(\beta_{0}^{2},\beta_{1}, \ldots, \beta_{l}, \beta_{l+1}^{2} + k_{2})}(\epsilon)}{
\beta_{0}^{2}! \beta_{1}! \cdots \beta_{l}! \beta_{l+1}^{2}! } |\epsilon|^{r(s_{1}+k_{0})-s_{2}}\\
\times \left( (\sum_{j=0}^{l+1} \beta_{j} + S)^{b(s_{1}+k_{0})} (\frac{(s_{1}+k_{0})e^{-1}}{\sigma(S-k_{2})})^{s_{1}+k_{0}}+
(\sum_{j=0}^{l+1} \beta_{j} + S)^{b(s_{1}+k_{0}+2)} (\frac{(s_{1}+k_{0}+2)e^{-1}}{\sigma(S-k_{2})})^{s_{1}+k_{0}+2}
\right)\\
\times (\beta_{0} + \sum_{j=1}^{l} \beta_{j}|\xi_{j}|)^{k_1} + \sum_{(l_{0},l_{1}) \in \mathcal{A}_{2},l_{1} \geq 2}
\sum_{m_{1} \in J_{(l_{0},l_{1}})}\\
C_{10}(1+\sum_{j=0}^{l} \beta_{j})^{hr_1} \times
\sum_{ \substack{ \beta_{0}^{-1} + \beta_{0}^{0} + \ldots + \beta_{0}^{l_{1}-1} = \beta_{0}  \\
\beta_{j}^{0} + \ldots + \beta_{j}^{l_{1}-1} = \beta_{j}, 1 \leq j \leq l \\
\beta_{l+1}^{-1} + \beta_{l+1}^{0} + \ldots + \beta_{l+1}^{l_{1}-1} = \beta_{l+1} } }
\frac{B_{m_{1},l_{0},l_{1},\beta_{0}^{-1},\beta_{l+1}^{-1}}}{\beta_{0}^{-1}!\beta_{l+1}^{-1}!}
\times  C_{11}^{l_{1}} |\epsilon|^{r(l_{0}+l_{1}-1)-m_{1}}
\frac{\Pi_{j=0}^{l_{1}-1}w_{(\beta_{0}^{j},\ldots,\beta_{l+1}^{j})}(\epsilon)}{\Pi_{j=0}^{l_{1}-1}
\beta_{0}^{j}! \cdots \beta_{l+1}^{j}!} \label{ineq_rec_w_beta}
\end{multline}
for all $\beta_{j} \geq 0$, $0 \leq j \leq l+1$, all $\epsilon \in D(0,\epsilon_{0}) \setminus \{ 0 \}$, where
\begin{multline*}
 A_{s_{1},s_{2},k_{0},k_{1},k_{2},\beta_{0}^{1},\beta_{l+1}^{1}} =
\sup_{(\tau,\epsilon) \in \Omega_{\underline{0}} \times D(0,\epsilon_{0}) \setminus \{ 0 \}}
|a_{s_{1},s_{2},k_{0},k_{1},k_{2},\beta_{0}^{1},\beta_{l+1}^{1}}(\tau,\epsilon)|,\\
B_{m_{1},l_{0},l_{1},\beta_{0}^{-1},\beta_{l+1}^{-1}} = \sup_{(\tau,\epsilon) \in \Omega_{\underline{0}} \times D(0,\epsilon_{0})
\setminus \{ 0 \} }
|\alpha_{m_{1},l_{0},l_{1},\beta_{0}^{-1},\beta_{l+1}^{-1}}(\tau,\epsilon)|.
\end{multline*}
\end{prop}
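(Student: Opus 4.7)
The plan is to apply the norm $\|\cdot\|_{(\beta_{0},\ldots,\beta_{l},\beta_{l+1}+S),\epsilon,\sigma,r,\Omega_{(\beta_{0},\ldots,\beta_{l},\beta_{l+1}+S)}}$ to both sides of the recursion (\ref{recursion_V_beta}), dividing first by the factor $\tau^{r_{2}} + (1+\sum_{j=0}^{l}\beta_{j}\xi_{j})^{r_{1}}$. The membership $V_{\underline{\beta}} \in E_{\underline{\beta},\epsilon,\sigma,r,\Omega_{\underline{\beta}}}$ is proved simultaneously by induction on $\beta_{l+1}$: for $0 \leq \beta_{l+1} \leq S-1$ it is given by hypothesis on the initial data, and for $\beta_{l+1} \geq S$ it follows from the estimates derived below, provided one shows that the right-hand side of (\ref{recursion_V_beta}) produces a function in the target space. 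Lemma 6 ensures that the multiplication operator by $1/(\tau^{r_{2}} + (1+\sum_{j=0}^{l}\beta_{j}\xi_{j})^{r_{1}})$ is bounded on $\Omega_{(\beta_{0},\ldots,\beta_{l},\beta_{l+1}+S)}$ by $C_{7}(1+\sum_{j=0}^{l}\beta_{j})^{hr_{1}}$, so an application of Proposition 4 yields the prefactor $C_{10}(1+\sum_{j=0}^{l}\beta_{j})^{hr_{1}}$ (with $C_{10}=C_{7}$) in front of the norm of the right-hand side.

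For the linear terms in the recursion, I would chain together Proposition 4 and Proposition 2. Multiplication by $a_{s_{1},s_{2},k_{0},k_{1},k_{2},\beta_{0}^{1},\beta_{l+1}^{1}}(\tau,\epsilon)$ is controlled by $A_{s_{1},s_{2},k_{0},k_{1},k_{2},\beta_{0}^{1},\beta_{l+1}^{1}}$ through Proposition 4. The operator $\tau^{s_{1}}\partial_{\tau}^{-k_{0}}$ is handled by Proposition 2, applied with source index $\underline{\beta}' = (\beta_{0}^{2},\beta_{1},\ldots,\beta_{l},\beta_{l+1}^{2}+k_{2})$ and target index $\underline{\beta}'' = (\beta_{0},\ldots,\beta_{l},\beta_{l+1}+S)$. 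The hypothesis $S > k_{2}$ is used exactly here: it guarantees $|\underline{\beta}''| - |\underline{\beta}'| = \beta_{0}^{1}+\beta_{l+1}^{1}+S-k_{2} \geq S-k_{2} \geq 1$, so that the denominator appearing in (\ref{norm_tau_partial_V<norm_V}) may be bounded below by $S-k_{2}$, which gives precisely the bracketed factor in the statement. The factor $|i^{k_{1}}(\beta_{0}^{2}+\sum_{j=1}^{l}\beta_{j}\xi_{j})^{k_{1}}|$ is majorized by $(\beta_{0}+\sum_{j=1}^{l}\beta_{j}|\xi_{j}|)^{k_{1}}$.

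For the nonlinear terms, I would iterate Proposition 3 along the $l_{1}$-fold convolution $V_{(\beta^{0})}\ast\cdots\ast V_{(\beta^{l_{1}-1})}$. Since the target index $|\underline{\beta}''| = |\underline{\beta}|+S$ majorizes $\sum_{j=0}^{l_{1}-1}|\beta^{j}| \leq |\underline{\beta}|$, the monotonicity assumption required by Proposition 3 is satisfied. Each convolution step contributes a factor $C_{5}|\epsilon|^{r}$, so the total contribution is $C_{5}^{l_{1}-1}|\epsilon|^{r(l_{1}-1)}\prod_{j=0}^{l_{1}-1}\|V_{(\beta^{j})}\|$. Corollary 1 then absorbs $\partial_{\tau}^{-l_{0}}$ into a factor $C_{6}|\epsilon|^{rl_{0}}$, and Proposition 4 accounts for the multiplication by $\alpha_{m_{1},l_{0},l_{1},\beta_{0}^{-1},\beta_{l+1}^{-1}}(\tau,\epsilon)\epsilon^{-m_{1}}$, contributing $B_{m_{1},l_{0},l_{1},\beta_{0}^{-1},\beta_{l+1}^{-1}}|\epsilon|^{-m_{1}}$. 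The combined constants $C_{5}^{l_{1}-1}C_{6}$ are bounded by $C_{11}^{l_{1}}$ for a suitable $C_{11}>0$ depending on $\sigma$ (the $\sigma$-dependence coming from Corollary 1).

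The main bookkeeping obstacle is the iterated nonlinear term: tracking the multi-index splittings $\beta^{-1},\beta^{0},\ldots,\beta^{l_{1}-1}$ consistently with the requirement of Proposition 3 and matching the factorial denominators $\prod_{m,j}\beta_{j}^{m}!$ produced by the Fourier/Taylor expansion against the Leibniz-type combinatorics is delicate but mechanical. The conceptual core of the argument is the interplay between the weight function $r_{b}(\underline{\beta})$ and the shift $|\underline{\beta}''|-|\underline{\beta}'| \geq S-k_{2}$ produced by the leading operator $\partial_{x}^{S}$: it is this positive gap that allows Proposition 2 to yield inverse-polynomial decay in the denominator factors, which in turn will later absorb the divergent multiplicative factor $(1+\sum_{j=0}^{l}\beta_{j})^{hr_{1}}$ coming from Lemma 6 and the small divisor phenomenon.
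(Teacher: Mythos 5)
Your proposal is correct and follows essentially the same route as the paper: apply the weighted norm to the recursion (\ref{recursion_V_beta}), use Lemma 6 together with Proposition 4 for the small-divisor factor, Proposition 2 (with the gap $|\tilde{\underline{\beta}}|-|\underline{\beta}|\geq S-k_{2}$) for the linear terms, and Propositions 3, 4 with Corollary 1 for the convolution terms, yielding the constants $C_{10}=C_{7}$ and $C_{11}^{l_{1}}\geq C_{6}C_{5}^{l_{1}-1}$ exactly as in the paper's proof.
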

\begin{proof} We apply the norm
$||.||_{(\beta_{0},\ldots,\beta_{l},\beta_{l+1}+S),\epsilon,\sigma,r,\Omega_{(\beta_{0},\ldots,\beta_{l},\beta_{l+1}+S)}}$ on the left and
right hand side of the equality (\ref{recursion_V_beta}) and use Propositions 2, 3, 4, Lemma 6  and Corollary 1 in order to majorize
the right hand side. Indeed, using Propositions 2, 3, 4, Corollary 1, Lemma 6 and the estimates
$$ (\beta_{0}^{2} + \sum_{j=1}^{l} \beta_{j} |\xi_{j}|)^{k_1} \leq (\beta_{0} + \sum_{j=1}^{l} \beta_{j} |\xi_{j}|)^{k_1} $$
we get a constant $C_{7}>0$ (depending on $r_{1},r_{2},C_{\xi_{1},\ldots,\xi_{l}},S_{d}$) such that
\begin{multline}
|| \frac{1}{\tau^{r_2} + (1 + \sum_{j=0}^{l} \beta_{j} \xi_{j})^{r_1}}
\frac{a_{s_{1},s_{2},k_{0},k_{1},k_{2},\beta_{0}^{1},\beta_{l+1}^{1}}(\tau,\epsilon)}{\beta_{0}^{1}!\beta_{l+1}^{1}!}
\tau^{s_1} \epsilon^{-s_{2}}\\
\times \frac{ \partial_{\tau}^{-k_{0}}(V_{\beta_{0}^{2},\beta_{1},\ldots,\beta_{l},\beta_{l+1}^{2}+k_{2}})(\tau,\epsilon) }{\beta_{0}^{2}!
\beta_{1}! \cdots \beta_{l}! \beta_{l+1}^{2}! }i^{k_{1}}(\beta_{0}^{2} +
\sum_{j=1}^{l} \beta_{j} \xi_{j})^{k_1} ||_{(\beta_{0},\ldots,\beta_{l},\beta_{l+1}+S)} \leq 
C_{7}(1+\sum_{j=0}^{l} \beta_{j})^{hr_1}\\
\times \frac{A_{s_{1},s_{2},k_{0},k_{1},k_{2},\beta_{0}^{1},\beta_{l+1}^{1}}}{\beta_{0}^{1}! \beta_{l+1}^{1}!}
\times \frac{w_{(\beta_{0}^{2},\beta_{1}, \ldots, \beta_{l}, \beta_{l+1}^{2} + k_{2})}(\epsilon)}{
\beta_{0}^{2}! \beta_{1}! \cdots \beta_{l}! \beta_{l+1}^{2}! } |\epsilon|^{r(s_{1}+k_{0})-s_{2}}\\
\times \left( (\sum_{j=0}^{l+1} \beta_{j} + S)^{b(s_{1}+k_{0})} (\frac{(s_{1}+k_{0})e^{-1}}{\sigma(S-k_{2})})^{s_{1}+k_{0}}+
(\sum_{j=0}^{l+1} \beta_{j} + S)^{b(s_{1}+k_{0}+2)} (\frac{(s_{1}+k_{0}+2)e^{-1}}{\sigma(S-k_{2})})^{s_{1}+k_{0}+2}
\right)\\
\times (\beta_{0} + \sum_{j=1}^{l} \beta_{j}|\xi_{j}|)^{k_1}
\end{multline}
and using the propositions 3, 4 and corollary 1, we obtain a universal constant $C_{5}>0$ and some constants
$C_{6}>0$ (depending on $l_{0},\sigma$) and $C_{7}>0$ (depending on $r_{1},r_{2},C_{\xi_{1},\ldots,\xi_{l}},S_{d}$)
such that
\begin{multline}
||  \frac{1}{\tau^{r_2} + (1 + \sum_{j=0}^{l} \beta_{j} \xi_{j})^{r_1}}
\frac{\alpha_{m_{1},l_{0},l_{1},\beta_{0}^{-1},\beta_{l+1}^{-1}}(\tau,\epsilon)\epsilon^{-m_{1}}}{\beta_{0}^{-1}! \beta_{l+1}^{-1}!}\\
\times \frac{ \partial_{\tau}^{-l_{0}}( V_{\beta_{0}^{0},\ldots,\beta_{l+1}^{0}}(\tau,\epsilon) * \cdots *
V_{\beta_{0}^{l_{1}-1},\ldots,\beta_{l+1}^{l_{1}-1}}(\tau,\epsilon) ) }{ \Pi_{m=0}^{l_{1}-1} \Pi_{j=0}^{l+1} \beta_{j}^{m}! }
||_{(\beta_{0},\ldots,\beta_{l},\beta_{l+1}+S)} \\
\leq C_{7}(1+\sum_{j=0}^{l} \beta_{j})^{hr_1}
\frac{B_{m_{1},l_{0},l_{1},\beta_{0}^{-1},\beta_{l+1}^{-1}}}{\beta_{0}^{-1}!\beta_{l+1}^{-1}!}
\times C_{6} C_{5}^{l_{1}-1} |\epsilon|^{r(l_{0}+l_{1}-1)-m_{1}}
\frac{\Pi_{j=0}^{l_{1}-1}w_{(\beta_{0}^{j},\ldots,\beta_{l+1}^{j})}(\epsilon)}{\Pi_{j=0}^{l_{1}-1}
\beta_{0}^{j}! \cdots \beta_{l+1}^{j}!}
\end{multline}
\end{proof}

We define the following formal series
\begin{multline*}
 \mathbb{A}_{s_{1},s_{2},k_{0},k_{1},k_{2}}(Z_{0},X) =
\sum_{\beta_{0},\beta_{l+1} \geq 0} A_{s_{1},s_{2},k_{0},k_{1},k_{2},\beta_{0},\beta_{l+1}}
\frac{Z_{0}^{\beta_0}}{\beta_{0}!} \frac{X^{\beta_{l+1}}}{\beta_{l+1}!},\\
\mathbb{B}_{m_{1},l_{0},l_{1}}(Z_{0},X)=\sum_{\beta_{0},\beta_{l+1} \geq 0}
B_{m_{1},l_{0},l_{1},\beta_{0},\beta_{l+1}} \frac{Z_{0}^{\beta_0}}{\beta_{0}!} \frac{X^{\beta_{l+1}}}{\beta_{l+1}!}
\end{multline*}
We consider the following Cauchy problem
\begin{multline}
\partial_{X}^{S}U(Z_{0},\ldots,Z_{l},X,\epsilon) = \sum_{(k_{0},k_{1},k_{2}) \in \mathcal{A}_{1}}
\sum_{(s_{1},s_{2}) \in I_{(k_{0},k_{1},k_{2})}} C_{10}(1 + \sum_{j=0}^{l} Z_{j}\partial_{Z_j})^{hr_{1}}\\
\times \left( (\frac{(s_{1}+k_{0})e^{-1}}{\sigma (S-k_{2})})^{s_{1}+k_{0}} (\sum_{j=0}^{l} Z_{j}\partial_{Z_j} +
X\partial_{X} + S)^{b(s_{1}+k_{0})} \right. \\
\left. +  (\frac{(s_{1}+k_{0}+2)e^{-1}}{\sigma (S-k_{2})})^{s_{1}+k_{0}+2}
(\sum_{j=0}^{l} Z_{j}\partial_{Z_j} + X\partial_{X} + S)^{b(s_{1}+k_{0}+2)} \right)\\
\times ( \sum_{j=0}^{l} |\xi_{j}| Z_{j}\partial_{Z_j} )^{k_1}\left( \epsilon_{0}^{r(s_{1}+k_{0}) - s_{2}}
\mathbb{A}_{s_{1},s_{2},k_{0},k_{1},k_{2}}(Z_{0},X)
(\partial_{X}^{k_2}U)(Z_{0},\ldots,Z_{l},X,\epsilon) \right)\\
+ \sum_{(l_{0},l_{1}) \in \mathcal{A}_{2},l_{1} \geq 2} \sum_{m_{1} \in J_{(l_{0},l_{1})}}
C_{10}(1 + \sum_{j=0}^{l} Z_{j}\partial_{Z_j})^{hr_{1}} \\
\times \epsilon_{0}^{r(l_{0}+l_{1}-1)-m_{1}}C_{11}^{l_{1}}
\mathbb{B}_{m_{1},l_{0},l_{1}}(Z_{0},X) (U(Z_{0},\ldots,Z_{l},X,\epsilon))^{l_1} \label{Aux_CP_formal}
\end{multline}
for given initial data
\begin{equation}
(\partial_{X}^{j}U)(Z_{0},\ldots,Z_{l},0,\epsilon) = \sum_{\beta_{0} \geq 0,\ldots,\beta_{l} \geq 0}
w_{\beta_{0},\ldots,\beta_{l},j}(\epsilon) \frac{Z_{0}^{\beta_0} \cdots Z_{l}^{\beta_l} }{ \beta_{0}! \cdots \beta_{l}! } \ \ , \ \
0 \leq j \leq S-1
\label{Aux_CP_formal_init_cond}
\end{equation}
for all $\epsilon \in D(0,\epsilon_{0}) \setminus \{ 0 \}$.

\begin{prop} Under the assumption that
\begin{equation}
S > k_{2} + b(s_{1}+k_{0}+2) \label{assum_A}
\end{equation}
for all $(k_{0},k_{1},k_{2}) \in \mathcal{A}_{1}$, all $(s_{1},s_{2}) \in I_{(k_{0},k_{1},k_{2})}$, there exists a formal series
\begin{equation}
U(Z_{0},\ldots,Z_{l},X,\epsilon) = \sum_{\underline{\beta} = (\beta_{0},\ldots,\beta_{l+1}) \in \mathbb{N}^{l+2}}
U_{\underline{\beta}}(\epsilon) \frac{Z_{0}^{\beta_0} \cdots Z_{l}^{\beta_l} X^{\beta_{l+1}} }{\beta_{0}! \cdots \beta_{l}!
\beta_{l+1}!}
\end{equation}
solution of (\ref{Aux_CP_formal}), (\ref{Aux_CP_formal_init_cond}), where the coefficients
$U_{\underline{\beta}}(\epsilon)$ satisfy the following recursion
\begin{multline}
\frac{U_{(\beta_{0},\ldots,\beta_{l},\beta_{l+1}+S)}(\epsilon)}{\beta_{0}! \cdots \beta_{l}! \beta_{l+1}!} =
\sum_{(k_{0},k_{1},k_{2}) \in \mathcal{A}_{1}} \sum_{(s_{1},s_{2}) \in I_{(k_{0},k_{1},k_{2})}}
C_{10}(1+\sum_{j=0}^{l} \beta_{j})^{hr_1}\\
\times \sum_{\substack{\beta_{0}^{1} + \beta_{0}^{2}=\beta_{0} \\ \beta_{l+1}^{1} + \beta_{l+1}^{2} = \beta_{l+1}}}
\frac{A_{s_{1},s_{2},k_{0},k_{1},k_{2},\beta_{0}^{1},\beta_{l+1}^{1}}}{\beta_{0}^{1}! \beta_{l+1}^{1}!}
\times \frac{U_{(\beta_{0}^{2},\beta_{1}, \ldots, \beta_{l}, \beta_{l+1}^{2} + k_{2})}(\epsilon)}{
\beta_{0}^{2}! \beta_{1}! \cdots \beta_{l}! \beta_{l+1}^{2}! }\epsilon_{0}^{r(s_{1}+k_{0})-s_{2}}\\
\times \left( (\sum_{j=0}^{l+1} \beta_{j} + S)^{b(s_{1}+k_{0})} (\frac{(s_{1}+k_{0})e^{-1}}{\sigma(S-k_{2})})^{s_{1}+k_{0}}+
(\sum_{j=0}^{l+1} \beta_{j} + S)^{b(s_{1}+k_{0}+2)} (\frac{(s_{1}+k_{0}+2)e^{-1}}{\sigma(S-k_{2})})^{s_{1}+k_{0}+2}
\right)\\
\times (\beta_{0} + \sum_{j=1}^{l} \beta_{j}|\xi_{j}|)^{k_1} + \sum_{(l_{0},l_{1}) \in \mathcal{A}_{2},l_{1} \geq 2}
\sum_{m_{1} \in J_{l_{0},l_{1}}}\\
C_{10}(1+\sum_{j=0}^{l} \beta_{j})^{hr_1} \times
\sum_{ \substack{ \beta_{0}^{-1} + \beta_{0}^{0} + \ldots + \beta_{0}^{l_{1}-1} = \beta_{0}  \\
\beta_{j}^{0} + \ldots + \beta_{j}^{l_{1}-1} = \beta_{j}, 1 \leq j \leq l \\
\beta_{l+1}^{-1} + \beta_{l+1}^{0} + \ldots + \beta_{l+1}^{l_{1}-1} = \beta_{l+1} } }
\frac{B_{m_{1},l_{0},l_{1},\beta_{0}^{-1},\beta_{l+1}^{-1}}}{\beta_{0}^{-1}!\beta_{l+1}^{-1}!}\\
\times C_{11}^{l_{1}}\epsilon_{0}^{r(l_{0}+l_{1}-1)-m_{1}}
\frac{\Pi_{j=0}^{l_{1}-1}U_{(\beta_{0}^{j},\ldots,\beta_{l+1}^{j})}(\epsilon)}{\Pi_{j=0}^{l_{1}-1}
\beta_{0}^{j}! \cdots \beta_{l+1}^{j}!} \label{rec_U_beta}
\end{multline}
for all $\beta_{j} \geq 0$, $0 \leq j \leq l+1$, all $\epsilon \in D(0,\epsilon_{0}) \setminus \{ 0 \}$.
\end{prop}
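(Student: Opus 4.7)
The plan is to define the coefficients $U_{\underline{\beta}}(\epsilon)$ by the recursion (\ref{rec_U_beta}) starting from the initial data (\ref{Aux_CP_formal_init_cond}), and then to verify by direct identification of coefficients that the resulting formal series solves the Cauchy problem (\ref{Aux_CP_formal}), (\ref{Aux_CP_formal_init_cond}).

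First, I would check well-posedness of the recursion. For $0 \leq \beta_{l+1} \leq S-1$, the coefficients $U_{(\beta_0, \ldots, \beta_l, \beta_{l+1})}(\epsilon)$ are prescribed by (\ref{Aux_CP_formal_init_cond}). For $\beta_{l+1} \geq 0$, formula (\ref{rec_U_beta}) expresses $U_{(\beta_0, \ldots, \beta_l, \beta_{l+1}+S)}(\epsilon)$ in terms of $U_{(\beta_0^2, \beta_1, \ldots, \beta_l, \beta_{l+1}^2 + k_2)}(\epsilon)$, whose last index is bounded by $\beta_{l+1} + k_2 < \beta_{l+1} + S$ thanks to assumption (\ref{assum_A}), and of $l_1$-fold convolution products $U_{\underline{\beta}^0}(\epsilon) \cdots U_{\underline{\beta}^{l_1-1}}(\epsilon)$ in which each last index satisfies $\beta_{l+1}^m \leq \beta_{l+1} < \beta_{l+1}+S$. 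An induction on $\beta_{l+1}$ then determines $U_{\underline{\beta}}(\epsilon)$ uniquely for every $\underline{\beta} \in \mathbb{N}^{l+2}$.

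Next, I would verify that the resulting formal series
$$ U(Z_0, \ldots, Z_l, X, \epsilon) = \sum_{\underline{\beta} \in \mathbb{N}^{l+2}} U_{\underline{\beta}}(\epsilon) \frac{Z_0^{\beta_0} \cdots Z_l^{\beta_l} X^{\beta_{l+1}}}{\beta_0! \cdots \beta_l! \beta_{l+1}!} $$
indeed satisfies (\ref{Aux_CP_formal}). To do so, I would extract the coefficient of the monomial $Z_0^{\beta_0} \cdots Z_l^{\beta_l} X^{\beta_{l+1}}$ on both sides. The left-hand side contributes $U_{(\beta_0, \ldots, \beta_l, \beta_{l+1}+S)}(\epsilon)/(\beta_0! \cdots \beta_l! \beta_{l+1}!)$. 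On the right-hand side I would use that the Euler-type operator $\sum_{j=0}^{l} Z_j \partial_{Z_j} + X \partial_X + S$ acts on this monomial as multiplication by $\sum_{j=0}^{l+1} \beta_j + S$, while $1 + \sum_{j=0}^{l} Z_j \partial_{Z_j}$ and $\sum_{j=0}^{l} |\xi_j| Z_j \partial_{Z_j}$ act by multiplication by $1 + \sum_{j=0}^{l} \beta_j$ and $\beta_0 + \sum_{j=1}^{l} |\xi_j| \beta_j$ respectively. Multiplication by $\mathbb{A}_{s_1,s_2,k_0,k_1,k_2}(Z_0,X)$ (resp.\ by $\mathbb{B}_{m_1,l_0,l_1}(Z_0,X)$) induces the convolution in the indices $(\beta_0, \beta_{l+1})$ that appears in the first (resp.\ second) sum of (\ref{rec_U_beta}), and the $l_1$-fold product $U^{l_1}$ induces the corresponding $l_1$-fold convolution in all indices. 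Assembling these contributions reproduces precisely the right-hand side of (\ref{rec_U_beta}), so the formal equation is satisfied.

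The main obstacle will be purely combinatorial bookkeeping: one has to carefully track the interplay between the factorial denominators $\beta_0! \cdots \beta_{l+1}!$, the multinomial factors produced by the Leibniz-type splittings coming from the multiplications by $\mathbb{A}_{s_1,s_2,k_0,k_1,k_2}$ and $\mathbb{B}_{m_1,l_0,l_1}$ and by the $l_1$-fold convolution, and the polynomial factors $(\sum_{j=0}^{l+1} \beta_j + S)^{b(s_1+k_0)}$ and $(\sum_{j=0}^{l+1} \beta_j + S)^{b(s_1+k_0+2)}$ produced by the Euler operators. No analytic estimate is needed at this stage since the claim is purely at the level of formal power series; the strengthened assumption (\ref{assum_A}) (beyond the weaker $k_2 < S$ that is all that well-posedness of the recursion requires) will only come into play in the subsequent propositions, where the formal series $U$ will serve as a majorant for the sequence $w_{\underline{\beta}}(\epsilon)$ of Proposition 7 and its convergence will have to be established.
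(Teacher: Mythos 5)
Your proposal is correct and is exactly the argument the paper has in mind: Proposition 7 is stated without proof, being the direct analogue of Proposition 6, whose justification is precisely the ``identification of coefficients'' computation you describe (Euler operators acting diagonally on monomials, multiplication by $\mathbb{A}$, $\mathbb{B}$ and the power $U^{l_1}$ producing the convolutions in the indices, and strong induction on the last index $\beta_{l+1}$ for well-posedness). Your side remark that only $S>k_{2}$ is needed at this purely formal stage, the full strength of (\ref{assum_A}) entering only in Proposition 9, is also accurate.
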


\begin{prop} Under the assumption (\ref{assum_A}) with the additional condition that
\begin{equation}
r(s_{1}+k_{0}) \geq s_{2} \ \ , \ \ r(l_{0}+l_{1}-1) \geq m_{1} \ \ , \ \ l_{1} \geq 2, \label{assum_B}
\end{equation}
for all $(k_{0},k_{1},k_{2}) \in \mathcal{A}_{1}$, all $(s_{1},s_{2}) \in I_{(k_{0},k_{1},k_{2})}$, all
$(l_{0},l_{1}) \in \mathcal{A}_{2},l_{1} \geq 2$ and $m_{1} \in J_{l_{0},l_{1}}$, the following inequalities
\begin{equation}
w_{\underline{\beta}}(\epsilon) \leq U_{\underline{\beta}}(\epsilon) \label{w_beta<U_beta}
\end{equation}
hold for all $\underline{\beta} \in \mathbb{N}^{l+2}$, all $\epsilon \in D(0,\epsilon_{0}) \setminus \{ 0 \}$.
\end{prop}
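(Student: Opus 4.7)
The plan is a classical majorant series comparison: observe that the recursion (\ref{rec_U_beta}) for $U_{\underline{\beta}}(\epsilon)$ is literally the equality version of the inequality (\ref{ineq_rec_w_beta}) satisfied by $w_{\underline{\beta}}(\epsilon)$, with the only differences being (i) the powers of $|\epsilon|$ are replaced by powers of $\epsilon_{0}$, and (ii) the initial data for $U$ are chosen in (\ref{Aux_CP_formal_init_cond}) to coincide with $w_{(\beta_{0},\ldots,\beta_{l},j)}(\epsilon)$. We prove (\ref{w_beta<U_beta}) by induction on the quantity $|\underline{\beta}| = \sum_{j=0}^{l+1}\beta_{j}$.

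For the base case, namely $\beta_{l+1} \leq S-1$ (with $\beta_{0},\ldots,\beta_{l}$ arbitrary), the initial condition (\ref{Aux_CP_formal_init_cond}) gives exactly $U_{(\beta_{0},\ldots,\beta_{l},j)}(\epsilon) = w_{(\beta_{0},\ldots,\beta_{l},j)}(\epsilon)$ for $0 \leq j \leq S-1$, so the inequality holds with equality. For the inductive step, I would fix $\underline{\beta}' = (\beta_{0},\ldots,\beta_{l},\beta_{l+1}+S)$ and compare (\ref{ineq_rec_w_beta}) and (\ref{rec_U_beta}) term by term. In the linear sum the indices appearing on the right are $(\beta_{0}^{2},\beta_{1},\ldots,\beta_{l},\beta_{l+1}^{2}+k_{2})$; by hypothesis $k_{2} < S$ (from Proposition 5's constraint reinforced by (\ref{assum_A})), so these indices have $\ell^{1}$-size strictly less than $|\underline{\beta}'|$. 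Similarly, in the nonlinear sum each factor $U_{(\beta_{0}^{j},\ldots,\beta_{l+1}^{j})}$ has index-sum at most $|\underline{\beta}| < |\underline{\beta}'|$ (since $\beta_{0}^{-1}, \beta_{l+1}^{-1} \geq 0$ absorb some of the mass when $l_{1} \geq 2$). Thus the induction hypothesis applies to every $w_{\cdot}$ appearing on the right of (\ref{ineq_rec_w_beta}).

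It remains to dominate the $\epsilon$-powers. Under assumption (\ref{assum_B}) one has $r(s_{1}+k_{0}) - s_{2} \geq 0$ and $r(l_{0}+l_{1}-1) - m_{1} \geq 0$, hence $|\epsilon|^{r(s_{1}+k_{0})-s_{2}} \leq \epsilon_{0}^{r(s_{1}+k_{0})-s_{2}}$ and $|\epsilon|^{r(l_{0}+l_{1}-1)-m_{1}} \leq \epsilon_{0}^{r(l_{0}+l_{1}-1)-m_{1}}$ for every $\epsilon \in D(0,\epsilon_{0})\setminus\{0\}$. Combining this numerical bound with the inductive estimates, each term on the right of (\ref{ineq_rec_w_beta}) is bounded by the corresponding term on the right of (\ref{rec_U_beta}). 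Summing these termwise inequalities yields $w_{\underline{\beta}'}(\epsilon) \leq U_{\underline{\beta}'}(\epsilon)$, completing the induction.

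No real obstacle is expected: the two recursions were designed to match. The only point that requires care is the book-keeping for the induction order, in particular checking that all multi-indices occurring on the right of (\ref{ineq_rec_w_beta}) have $|\cdot|$ strictly less than $|\underline{\beta}'|$, which is exactly where the structural assumption $S > k_{2}$ (contained in (\ref{assum_A})) is used for the linear part and where $l_{1} \geq 2$ together with $\beta_{0}^{-1}+\beta_{l+1}^{-1} \geq 0$ suffices for the nonlinear part.
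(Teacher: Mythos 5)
Your proposal is correct and follows essentially the same route as the paper, which likewise observes that the initial data coincide and then concludes by induction comparing the inequality (\ref{ineq_rec_w_beta}) term by term with the equality (\ref{rec_U_beta}), using (\ref{assum_B}) to replace $|\epsilon|$ by $\epsilon_{0}$. Your write-up merely supplies the book-keeping (well-foundedness of the induction via $S>k_{2}$ and nonnegativity of all terms) that the paper leaves implicit.
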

\begin{proof} By the assumption (\ref{Aux_CP_formal_init_cond}), we know that
$$ U_{(\beta_{0},\ldots,\beta_{l},j)}(\epsilon) = w_{(\beta_{0},\ldots,\beta_{l},j)}(\epsilon) $$
for all $0 \leq j \leq S-1$, all $(\beta_{0},\ldots,\beta_{l}) \in \mathbb{N}^{l+1}$ and all $\epsilon \in D(0,\epsilon_{0}) \setminus \{ 0 \}$.
Therefore,
we get our result by using induction from the inequalities (\ref{ineq_rec_w_beta}) and the equalities (\ref{rec_U_beta}).
\end{proof}

In the next proposition we give sufficient conditions for the formal series solutions of the Cauchy problem
(\ref{GCP}), (\ref{init_cond_GCP}) in order to define actual holomorphic functions with exponential bound estimates.

\begin{prop} We make the assumption that (\ref{assum_B}) holds. We also assume that
\begin{equation}
S > b(s_{1}+k_{0}+2) + k_{2} \ \ , \ \ S \geq hr_{1} + b(s_{1}+k_{0}+2) + k_{1} + k_{2} \label{assum_C}
\end{equation}
for all $(k_{0},k_{1},k_{2}) \in \mathcal{A}_{1}$, all $(s_{1},s_{2}) \in I_{(k_{0},k_{1},k_{2})}$.

We choose two real numbers $\rho_{1}',M^{0}>0$ such that
\begin{equation}
M^{0} > 2(l+2)\exp( \rho_{1}' \max_{j=0}^{l} |\xi_{j}| ) \label{M0_larger_than_l_xij}
\end{equation}
and we take $\bar{X}^{0}>0$ and $\bar{Z}_{j}^{0} > M^{0}$, for $0 \leq j \leq l$. We
assume that the formal series
$$ \varphi_{j}(Z_{0},\ldots,Z_{l},\epsilon) = \sum_{\beta_{0},\ldots,\beta_{l} \geq 0}
w_{(\beta_{0},\ldots,\beta_{l},j)}(\epsilon) \frac{Z_{0}^{\beta_0} \cdots Z_{l}^{\beta_l}}{\beta_{0}! \cdots \beta_{l}!} \ \ , \ \ 
0 \leq j \leq S-1, $$
belong to $G(\bar{Z}_{0}^{0},\ldots,\bar{Z}_{l}^{0},\bar{X}^{0})$ for all $\epsilon \in D(0,\epsilon_{0}) \setminus \{ 0 \}$ and
that there exists a constant $C_{\varphi_{j}}$ such that
$\sup_{\epsilon \in D(0,\epsilon_{0}) \setminus \{ 0 \}} ||\varphi_{j}||_{(\bar{Z}_{0}^{0},\ldots,\bar{Z}_{l}^{0},\bar{X}^{0})}
\leq C_{\varphi_{j}}$. As a consequence,
the formal series (\ref{defin_V_j_init_data}) define holomorphic functions $V_{j}(\tau,z,\epsilon)$ on a the product
$S_{d} \times H_{\rho_{1}'} \times D(0,\epsilon_{0}) \setminus \{ 0 \}$ and satisfy the following estimates :
there exists a constant $C_{12}>0$ (depending on $C_{\varphi_{j}},l$)
such that
\begin{equation}
|V_{j}(\tau,z,\epsilon)| \leq C_{12} (1 + \frac{|\tau|^2}{|\epsilon|^{2r}})^{-1} \exp( \frac{\sigma}{|\epsilon|^{r}} \zeta(b) |\tau| )
\end{equation}
for all $(\tau,z,\epsilon) \in S_{d} \times H_{\rho_{1}'} \times D(0,\epsilon_{0}) \setminus \{ 0 \}$, all $0 \leq j \leq S-1$.

Then, there exists $\delta>0$ (depending on $M^{0}$,$\bar{Z}_{j}^{0},|\xi_{j}|$ for $0 \leq j \leq l$,
$\bar{X}^{0},r_{1},r_{2},C_{\xi_{1},\ldots,\xi_{l}},S_{d}$,$h,\sigma$,\\$b,S$,$\mathbb{A}_{s_{1},s_{2},k_{0},k_{1},k_{2}}(Z_{0},X)$
for $(k_{0},k_{1},k_{2}) \in \mathcal{A}_{1}$ and $(s_{1},s_{2}) \in I_{(k_{0},k_{1},k_{2})}$ and
$\mathbb{B}_{m_{1},l_{0},l_{1}}(Z_{0},X)$ for $(l_{0},l_{1}) \in \mathcal{A}_{2}$, $m_{1} \in J_{(l_{0},l_{1})}$) such that
if one assumes moreover that
\begin{equation}
||\varphi_{j}(Z_{0},\ldots,Z_{l},\epsilon)||_{(\bar{Z}_{0}^{0},\ldots,\bar{Z}_{l}^{0},\bar{X}^{0})} < \delta \ \ , \ \ 0 \leq j \leq S-1,
\label{small_norm_init_cond_GCP}
\end{equation}
for all $\epsilon \in D(0,\epsilon_{0}) \setminus \{ 0 \}$, the formal series (\ref{defin_V}), solution of (\ref{GCP}), (\ref{init_cond_GCP}),
defines a holomorphic function $V(\tau,z,x,\epsilon)$ on the
product $S_{d} \times H_{\rho_{1}'} \times D(0,\rho_{1}) \times D(0,\epsilon_{0}) \setminus \{ 0 \}$, for some $\rho_{1}>0$ and carries the next
bound estimates : there exists a constant $C_{13}>0$ (depending on the same as for $\delta$ given above) with
\begin{equation}
|V(\tau,z,x,\epsilon)| \leq C_{13} (1 + \frac{|\tau|^2}{|\epsilon|^{2r}})^{-1} \exp( \frac{\sigma}{|\epsilon|^{r}} \zeta(b) |\tau| )
\end{equation}
for all $(\tau,z,x,\epsilon) \in S_{d} \times H_{\rho_{1}'} \times D(0, \rho_{1}) \times D(0,\epsilon_{0}) \setminus \{ 0 \}$.
\end{prop}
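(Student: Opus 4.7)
The plan is to majorize the coefficients $V_{\underline{\beta}}(\tau,\epsilon)$ of the formal series (\ref{defin_V}) by the Taylor coefficients $U_{\underline{\beta}}(\epsilon)$ of a genuine analytic power series produced by Proposition 1, and then to convert this term-by-term bound into the desired pointwise exponential estimate by comparing the norms $\|\cdot\|_{\underline{\beta},\epsilon,\sigma,r,\Omega_{\underline{\beta}}}$ and $\|\cdot\|_{(\bar{Z}_{0}^{1},\ldots,\bar{Z}_{l}^{1},\bar{X}^{1})}$. The gap $M^{0}>2(l+2)\exp(\rho_{1}'\max_{j}|\xi_{j}|)$ supplied by (\ref{M0_larger_than_l_xij}) is exactly what makes the final comparison possible.

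As a warm-up I would settle the bound on the initial data $V_{j}(\tau,z,\epsilon)$. By the very definition of $E_{(\beta_{0},\ldots,\beta_{l},j),\epsilon,\sigma,r,\Omega_{(\beta_{0},\ldots,\beta_{l},j)}}$ and the trivial inequality $r_{b}(\underline{\beta})\leq\zeta(b)$, each coefficient $V_{(\beta_{0},\ldots,\beta_{l},j)}(\tau,\epsilon)$ is dominated pointwise by $w_{(\beta_{0},\ldots,\beta_{l},j)}(\epsilon)(1+|\tau|^{2}/|\epsilon|^{2r})^{-1}\exp(\sigma\zeta(b)|\tau|/|\epsilon|^{r})$. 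Multiplying by $|\exp(iz\sum\beta_{k}\xi_{k})|\leq\exp(\rho_{1}'\sum\beta_{k}|\xi_{k}|)$, summing, and applying the multinomial inequality $1/(\beta_{0}!\cdots\beta_{l}!)\leq(l+1)^{\sum\beta_{k}}/(\sum\beta_{k})!$, the remaining factor is majorized by $\|\varphi_{j}\|_{(\bar{Z}_{0}^{0},\ldots,\bar{Z}_{l}^{0},\bar{X}^{0})}\leq C_{\varphi_{j}}$ since $(l+1)\exp(\rho_{1}'|\xi_{k}|)<M^{0}<\bar{Z}_{k}^{0}$ by (\ref{M0_larger_than_l_xij}).

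The core of the argument is to apply Proposition 1 to the auxiliary Cauchy problem (\ref{Aux_CP_formal})--(\ref{Aux_CP_formal_init_cond}). After fully expanding the Euler-type operators $(1+\sum Z_{j}\partial_{Z_{j}})^{hr_{1}}$, $(\sum Z_{j}\partial_{Z_{j}}+X\partial_{X}+S)^{b(s_{1}+k_{0}+2)}$ and $(\sum|\xi_{j}|Z_{j}\partial_{Z_{j}})^{k_{1}}$, the right-hand side reduces to a finite sum of operators $d_{\underline{k}}(Z_{0},\ldots,Z_{l},X)\partial_{Z_{0}}^{k_{0}}\cdots\partial_{Z_{l}}^{k_{l}}\partial_{X}^{k_{l+1}}$ whose total order is bounded by $hr_{1}+k_{1}+k_{2}+b(s_{1}+k_{0}+2)$ and whose $\partial_{X}$-order is bounded by $k_{2}+b(s_{1}+k_{0}+2)$. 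Hypothesis (\ref{assum_C}) gives exactly $k_{l+1}<S$ and $\sum_{j=0}^{l+1}k_{j}\leq S$, fulfilling the structural requirement of Proposition 1; the factorial/exponential decay assumed on $A_{s_{1},s_{2},k_{0},k_{1},k_{2},\beta_{0},\beta_{l+1}}$ and $B_{m_{1},l_{0},l_{1},\beta_{0},\beta_{l+1}}$ places $\mathbb{A}_{s_{1},s_{2},k_{0},k_{1},k_{2}}$ and $\mathbb{B}_{m_{1},l_{0},l_{1}}$ in $G(\bar{Z}_{0}^{0},\ldots,\bar{Z}_{l}^{0},\bar{X}^{0})$ for appropriate $\bar{Z}_{k}^{0}$, $\bar{X}^{0}$; and (\ref{small_norm_init_cond_GCP}) provides the smallness of the initial data. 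Proposition 1 therefore delivers a solution $U(Z_{0},\ldots,Z_{l},X,\epsilon)\in G(\bar{Z}_{0}^{1},\ldots,\bar{Z}_{l}^{1},\bar{X}^{1})$ with norm at most $\delta C_{3}$ uniformly in $\epsilon$, whose Taylor coefficients obey (\ref{rec_U_beta}). Comparing (\ref{ineq_rec_w_beta}) and (\ref{rec_U_beta}), Proposition 7 yields $w_{\underline{\beta}}(\epsilon)\leq U_{\underline{\beta}}(\epsilon)$ for every $\underline{\beta}$.

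It remains only to sum (\ref{defin_V}). Exactly as for $V_{j}$, each term is dominated by $w_{\underline{\beta}}(\epsilon)\exp(\rho_{1}'\sum\beta_{k}|\xi_{k}|)|x|^{\beta_{l+1}}/(\beta_{0}!\cdots\beta_{l+1}!)$ times the prefactor $(1+|\tau|^{2}/|\epsilon|^{2r})^{-1}\exp(\sigma\zeta(b)|\tau|/|\epsilon|^{r})$. Replacing $w_{\underline{\beta}}$ by $U_{\underline{\beta}}$ and using $1/(\beta_{0}!\cdots\beta_{l+1}!)\leq(l+2)^{\sum\beta_{j}}/(\sum\beta_{j})!$, the inner sum is majorized by $\|U\|_{(\bar{Z}_{0}^{1},\ldots,\bar{Z}_{l}^{1},\bar{X}^{1})}\leq\delta C_{3}$, provided $(l+2)\exp(\rho_{1}'|\xi_{k}|)\leq\bar{Z}_{k}^{1}$ for each $k$ (which again follows from $\bar{Z}_{k}^{1}>M^{0}$ and (\ref{M0_larger_than_l_xij})) and $(l+2)\rho_{1}\leq\bar{X}^{1}$ (which fixes $\rho_{1}$, chosen also smaller than $\rho$). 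One concludes with $C_{13}=\delta C_{3}$. The main obstacle is the bookkeeping in the Proposition 1 step: the Euler-type operators must be expanded into monomial derivatives, their combined order measured simultaneously against both constraints of (\ref{assum_C}), and the polynomial factors in $Z_{j},X$ generated by the expansion absorbed into the $G$-norm of the coefficients. Once this routine but lengthy verification is in place, Propositions 1 and 7 together with the gap (\ref{M0_larger_than_l_xij}) do all the heavy lifting.
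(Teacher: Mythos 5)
Your proposal is correct and follows essentially the same route as the paper: bound the initial data via the $G$-norm hypothesis, the inequality $r_{b}(\underline{\beta})\leq\zeta(b)$ and the multinomial estimate; invoke Proposition 1 for the majorant problem (\ref{Aux_CP_formal})--(\ref{Aux_CP_formal_init_cond}); transfer the bound to $w_{\underline{\beta}}$ through the comparison $w_{\underline{\beta}}\leq U_{\underline{\beta}}$ (which is Proposition 8, not Proposition 7, in the paper's numbering); and resum using the gap (\ref{M0_larger_than_l_xij}) and a small radius $\rho_{1}$ in $x$. The only cosmetic difference is that you spell out the expansion of the Euler-type operators needed to fit (\ref{Aux_CP_formal}) into the format of Proposition 1, a verification the paper leaves implicit.
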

\begin{proof} Since $\varphi_{j}(Z_{0},\ldots,Z_{l},\epsilon)$ belongs to $G(\bar{Z}_{0}^{0},\ldots,\bar{Z}_{l}^{0},\bar{X}^{0})$, we
get a constant $C_{14}>0$ (depending on $C_{\varphi_{j}}$)
such that
\begin{equation}
||V_{(\beta_{0},\ldots,\beta_{l},j)}(\tau,\epsilon)||_{(\beta_{0},\ldots,\beta_{l},j),\epsilon,\sigma,r,\Omega_{(\beta_{0},\ldots,\beta_{l},j)}}
\leq C_{14} (\frac{1}{\bar{Z}_{0}^{0}})^{\beta_0} \cdots (\frac{1}{\bar{Z}_{l}^{0}})^{\beta_l}
(\sum_{q=0}^{l} \beta_{q})!  \label{norm_V_j_beta<}
\end{equation}
for all $\beta_{0},\ldots,\beta_{l} \geq 0$, all $0 \leq j \leq S-1$. From the multinomial formula, we know that
\begin{equation}
 (\sum_{q=0}^{l} \beta_{q})! \leq (l+1)^{\sum_{q=0}^{l} \beta_{q}} \beta_{0}! \cdots \beta_{l}! \label{multin_ineq}
\end{equation}
for all $\beta_{0},\ldots,\beta_{l} \geq 0$. Therefore, from (\ref{norm_V_j_beta<}), we deduce that
\begin{equation}
|V_{(\beta_{0},\ldots,\beta_{l},j)}(\tau,\epsilon)| \leq C_{14} (1 + \frac{|\tau|^2}{|\epsilon|^{2r}})^{-1}
\exp( \frac{\sigma}{|\epsilon|^{r}}r_{b}(\sum_{q=0}^{l} \beta_{q} + j)|\tau| )(\frac{l+1}{M^{0}})^{\sum_{q=0}^{l} \beta_{q}}
\beta_{0}! \cdots \beta_{l}!
\end{equation}
for all $\beta_{0},\ldots,\beta_{l} \geq 0$, $0 \leq j \leq S-1$, all $\tau \in \Omega_{(\beta_{0},\ldots,\beta_{l},j)}$, and
all $\epsilon \in D(0,\epsilon_{0}) \setminus \{ 0 \}$. From the assumption (\ref{M0_larger_than_l_xij}), we deduce that
the formal series (\ref{defin_V_j_init_data}) defines a holomorphic function $V_{j}(\tau,z,\epsilon)$ on the product
$S_{d} \times H_{\rho_{1}'} \times D(0,\epsilon_{0}) \setminus \{ 0 \}$ and satisfies
\begin{multline}
|V_{j}(\tau,z,\epsilon)| \leq C_{14} (1 + \frac{|\tau|^2}{|\epsilon|^{2r}})^{-1} \exp( \frac{\sigma}{|\epsilon|^{r}} \zeta(b) |\tau| )\\
\times \sum_{\beta_{0},\ldots,\beta_{l} \geq 0}
(\frac{(l+1)\exp( \rho_{1}' \max_{q=0}^{l} |\xi_{q}| )}{M^{0}})^{\sum_{q=0}^{l} \beta_{q}}\\
\leq 2^{l+1}  C_{14} (1 + \frac{|\tau|^2}{|\epsilon|^{2r}})^{-1} \exp( \frac{\sigma}{|\epsilon|^{r}} \zeta(b) |\tau| )
\end{multline}
for all $(\tau,z,\epsilon) \in S_{d} \times H_{\rho_{1}'} \times D(0,\epsilon_{0}) \setminus \{ 0 \}$, all $0 \leq j \leq S-1$.

Under the assumptions (\ref{assum_B}), (\ref{assum_C}) together with (\ref{small_norm_init_cond_GCP}), we see that the
hypotheses of Proposition 1 are fulfilled for the Cauchy problem (\ref{Aux_CP_formal}), (\ref{Aux_CP_formal_init_cond}). Therefore,
we deduce that the formal solution $U(Z_{0},\ldots,Z_{l},X,\epsilon)$ of (\ref{Aux_CP_formal}), (\ref{Aux_CP_formal_init_cond})
constructed in Proposition 7 belongs to the space $G(\bar{Z}_{0}^{1},\ldots,\bar{Z}_{l}^{1},\bar{X}^{1})$ for some
$0 < \bar{X}^{1} < \bar{X}^{0}$ and for some $\bar{Z}_{j}^{1} > M^{0}$. Moreover, we get a constant $C_{15}>0$
(depending on $\bar{Z}_{j}^{0},\bar{Z}_{j}^{1}$,$|\xi_{j}|$ for $0 \leq j \leq l$,
$\bar{X}^{0},r_{1},r_{2},C_{\xi_{1},\ldots,\xi_{l}},S_{d}$,$h,\sigma,b,S$,\\
$\mathbb{A}_{s_{1},s_{2},k_{0},k_{1},k_{2}}(Z_{0},X)$
for $(k_{0},k_{1},k_{2}) \in \mathcal{A}_{1}$ and $(s_{1},s_{2}) \in I_{(k_{0},k_{1},k_{2})}$ and
$\mathbb{B}_{m_{1},l_{0},l_{1}}(Z_{0},X)$ for $(l_{0},l_{1}) \in \mathcal{A}_{2}$, $m_{1} \in J_{(l_{0},l_{1})}$)
such that
$$ ||U(Z_{0},\ldots,Z_{l},X,\epsilon)||_{(\bar{Z}_{0}^{1},\ldots,\bar{Z}_{l}^{1})} \leq \delta C_{15} $$
for all $\epsilon \in D(0,\epsilon_{0}) \setminus \{ 0 \}$. In particular, we deduce that
\begin{equation}
|U_{(\beta_{0},\ldots,\beta_{l+1})}(\epsilon)| \leq \delta C_{15}
(\frac{1}{\bar{Z}_{0}^{1}})^{\beta_0} \cdots (\frac{1}{\bar{Z}_{l}^{1}})^{\beta_l}(\frac{1}{\bar{X}^1})^{\beta_{l+1}}
(\sum_{j=0}^{l+1} \beta_{j})! \label{abs_U_beta<}
\end{equation}
for all $\beta_{0},\ldots,\beta_{l+1} \geq 0$. Gathering (\ref{w_beta<U_beta}) and (\ref{abs_U_beta<}) yields
\begin{equation}
||V_{\underline{\beta}}(\tau,\epsilon)||_{\underline{\beta},\epsilon,\sigma,r,\Omega_{\underline{\beta}}} \leq
\delta C_{15} (\frac{1}{\bar{Z}_{0}^{1}})^{\beta_0} \cdots (\frac{1}{\bar{Z}_{l}^{1}})^{\beta_l}(\frac{1}{\bar{X}^1})^{\beta_{l+1}}
(\sum_{j=0}^{l+1} \beta_{j})! \label{norm_V_beta<}
\end{equation}
for all $\underline{\beta} = (\beta_{0},\ldots,\beta_{l+1}) \in \mathbb{N}^{l+2}$. Again by the multinomial formula, we have that
$$ (\sum_{j=0}^{l+1} \beta_{j})! \leq (l+2)^{\sum_{j=0}^{l+1} \beta_{j}} \beta_{0}! \cdots \beta_{l+1}! $$
for all $\beta_{0},\ldots,\beta_{l+1} \geq 0$. Hence, from (\ref{norm_V_beta<}), we get that
\begin{equation}
|V_{\underline{\beta}}(\tau,\epsilon)| \leq \delta C_{15} (1 + \frac{|\tau|^2}{|\epsilon|^{2r}})^{-1}
\exp( \frac{\sigma}{|\epsilon|^{r}}r_{b}(\underline{\beta})|\tau| )(\frac{l+2}{M^{0}})^{\sum_{j=0}^{l} \beta_{j}}
(\frac{l+2}{\bar{X}^{1}})^{\beta_{l+1}} \beta_{0}! \cdots \beta_{l+1}! \label{V_beta<}
\end{equation}
for all $\beta_{0},\ldots,\beta_{l+1} \geq 0$, all $\tau \in \Omega_{\underline{\beta}}$, all $\epsilon \in D(0,\epsilon_{0}) \setminus \{ 0 \}$.
We deduce that
the formal series (\ref{defin_V}) defines a holomorphic function $V(\tau,z,x,\epsilon)$ on the product
$S_{d} \times H_{\rho_{1}'} \times D(0, \frac{\bar{X}^{1}}{2(l+2)}) \times D(0,\epsilon_{0}) \setminus \{ 0 \}$ and satisfies
\begin{multline}
|V(\tau,z,x,\epsilon)| \leq \delta C_{15} (1 + \frac{|\tau|^2}{|\epsilon|^{2r}})^{-1} \exp( \frac{\sigma}{|\epsilon|^{r}} \zeta(b) |\tau| )\\
\times \sum_{\beta_{0},\ldots,\beta_{l+1} \geq 0}
(\frac{(l+2)\exp( \rho_{1}' \max_{j=0}^{l} |\xi_{j}| )}{M^{0}})^{\sum_{j=0}^{l} \beta_{j}}
(\frac{(l+2)|x|}{\bar{X}^{1}})^{\beta_{l+1}}\\
\leq 2^{l+2} \delta C_{15} (1 + \frac{|\tau|^2}{|\epsilon|^{2r}})^{-1} \exp( \frac{\sigma}{|\epsilon|^{r}} \zeta(b) |\tau| )
\end{multline}
for all $(\tau,z,x,\epsilon) \in S_{d} \times H_{\rho_{1}'} \times D(0, \frac{\bar{X}^{1}}{2(l+2)}) \times D(0,\epsilon_{0}) \setminus \{ 0 \}$.
\end{proof}

\section{Analytic solutions in a complex parameter of a singular Cauchy problem}

\subsection{Laplace transform and asymptotic expansions}

We recall the definition of Borel summability of formal series with coefficients in a Banach space,
see \cite{ba}.

\begin{defin} A formal series
$$\hat{X}(t) = \sum_{j=0}^{\infty}  \frac{ a_{j} }{ j! } t^{j} \in \mathbb{E}[[t]]$$
with coefficients in a Banach space $( \mathbb{E}, ||.||_{\mathbb{E}} )$ is said to be $1-$summable
with respect to $t$ in the direction $d \in [0,2\pi)$ if \medskip

{\bf i)} there exists $\rho \in \mathbb{R}_{+}$ such that the following formal series, called formal Borel transform of $\hat{X}$ of order 1 
$$ \mathcal{B}(\hat{X})(\tau) = \sum_{j=0}^{\infty} \frac{ a_{j} \tau^{j}  }{ (j!)^{2} } \in \mathbb{E}[[\tau]],$$
is absolutely convergent for $|\tau| < \rho$, \medskip

{\bf ii)} there exists $\delta > 0$ such that the series $\mathcal{B}(\hat{X})(\tau)$ can be analytically continued with
respect to $\tau$ in a sector
$S_{d,\delta} = \{ \tau \in \mathbb{C}^{\ast} : |d - \mathrm{arg}(\tau) | < \delta \} $. Moreover, there exist $C >0$, and $K >0$ such that
$$ ||\mathcal{B}(\hat{X})(\tau)||_{\mathbb{E}}
\leq C e^{ K|\tau| } $$
for all $\tau \in S_{d, \delta}$.
\end{defin}
If this is so, the vector valued Laplace transform of order $1$ of $\mathcal{B}(\hat{X})(\tau)$ in the direction $d$ is defined by
$$ \mathcal{L}^{d}(\mathcal{B}(\hat{X}))(t) = t^{-1} \int_{L_{\gamma}}
\mathcal{B}(\hat{X})( \tau ) e^{ - ( \tau/t ) } d \tau,$$
along a half-line $L_{\gamma} = \mathbb{R}_{+}e^{i\gamma} \subset S_{d,\delta} \cup \{ 0 \}$, where $\gamma$ depends on $t$
and is chosen in such a way that $\cos(\gamma - \mathrm{arg}(t)) \geq \delta_{1} > 0$, for some fixed $\delta_{1}$, for all $t$ in
any sector
$$ S_{d,\theta,R} = \{ t \in \mathbb{C}^{\ast} : |t| < R \ \ , \ \ |d - \mathrm{arg}(t) | < \theta/2 \},$$
where $\pi < \theta < \pi + 2\delta$ and $0 < R < \delta_{1}/K$. The function $\mathcal{L}^{d} (\mathcal{B}(\hat{X}))(t)$
is called the $1-$sum of the formal series $\hat{X}(t)$ in the direction $d$ and defines a holomorphic bounded function on
the sector $S_{d,\theta,R}$. Moreover, it has the formal series $\hat{X}(t)$ as Gevrey asymptotic
expansion of order 1 with respect to $t$ on $S_{d,\theta,R}$. This means that for all $\theta_{1} < \theta$, there exist $C,M > 0$ such
that
$$ ||\mathcal{L}^{d}(\mathcal{B}(\hat{X}))(t) - \sum_{p=0}^{n-1} \frac{a_p}{p!} t^{p}||_{\mathbb{E}} \leq CM^{n}n!|t|^{n} $$
for all $n \geq 1$, all $t \in S_{d,\theta_{1},R}$.\medskip

In the next proposition, we give some well known identities for the Borel transform that will be useful in the sequel.

\begin{prop} Let $\hat{X}(t) = \sum_{n \geq 0} a_{n}t^{n}/n!$ and $\hat{G}(t) = \sum_{n \geq 0} b_{n}t^{n}/n!$ be formal series in $\mathbb{E}[[t]]$. We have the following equalities as formal series in $\mathbb{E}[[\tau]]$:
\begin{multline*}
({\tau} \partial_{\tau}^{2} + \partial_{\tau})(\mathcal{B}(\hat{X})(\tau)) = \mathcal{B}(\partial_{t}\hat{X}(t))(\tau),
\partial_{\tau}^{-1}(\mathcal{B}(\hat{X}))(\tau) = \mathcal{B}(t\hat{X}(t))(\tau),\\
\tau \mathcal{B}(\hat{X})(\tau) = \mathcal{B}((t^{2}\partial_{t} + t)\hat{X}(t))(\tau),
\int_{0}^{\tau}(\mathcal{B}\hat{X})(\tau - s)(\mathcal{B}\hat{G})(s) ds = \mathcal{B}(t\hat{X}(t)\hat{G}(t))(\tau).
\end{multline*}
\end{prop}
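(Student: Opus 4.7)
The plan is to verify each of the four identities by matching coefficients of $\tau^m$ on both sides, using only the definition $\mathcal{B}(\hat{X})(\tau) = \sum_{n \geq 0} a_n \tau^n/(n!)^2$. Since the equalities are required only in the formal series space $\mathbb{E}[[\tau]]$, no convergence arguments are needed; the proof amounts to careful factorial bookkeeping together with one Beta-function integral.

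For the first identity I would compute $\partial_\tau \mathcal{B}(\hat{X}) = \sum_{n \geq 1} a_n \tau^{n-1}/(n!(n-1)!)$ and $\tau \partial_\tau^2 \mathcal{B}(\hat{X}) = \sum_{n \geq 2} a_n \tau^{n-1}/(n!(n-2)!)$, add them, and re-index with $m = n-1$; after collecting $1/m + 1 = (m+1)/m$, the coefficient collapses to $a_{m+1}/(m!)^2$, which is exactly the $m$-th Borel coefficient of $\partial_t \hat{X} = \sum_{m \geq 0} a_{m+1} t^m/m!$. The second identity is a single line: termwise integration gives $\partial_\tau^{-1}\mathcal{B}(\hat{X}) = \sum_n a_n \tau^{n+1}/((n+1)!\,n!)$, which matches $\mathcal{B}(t\hat{X})$ after the reindexing $m = n+1$. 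For the third identity, I would check that the coefficient of $t^m/m!$ in $(t^2\partial_t + t)\hat{X}$ equals $m(m-1)a_{m-1} + m a_{m-1} = m^2 a_{m-1}$ for $m \geq 2$ (and $a_0$ for $m=1$); dividing by $(m!)^2$ yields $\sum_{m \geq 1} a_{m-1}\tau^m/((m-1)!)^2 = \tau \mathcal{B}(\hat{X})(\tau)$ on the nose.

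The convolution identity is the only step involving more than pure bookkeeping. Expanding $\mathcal{B}(\hat{X})(\tau-s)\mathcal{B}(\hat{G})(s)$ as a double series and integrating termwise, I would invoke the Beta integral $\int_0^\tau (\tau-s)^j s^k\,ds = j!\,k!\,\tau^{j+k+1}/(j+k+1)!$. Substituting $m = j+k+1$, the squared factorials in the Borel transform cancel against the $j!\,k!$ produced by the Beta integral, and the binomial sum $c_{m-1} = \sum_{j+k=m-1} \binom{m-1}{j} a_j b_k$ appears, which is precisely the $t^{m-1}/(m-1)!$-coefficient of $\hat{X}(t)\hat{G}(t)$. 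The resulting expression $\sum_{m \geq 1} c_{m-1}\tau^m/(m!(m-1)!)$ matches $\mathcal{B}(t\hat{X}\hat{G})(\tau)$ computed directly. The only real pitfall is keeping the factorials straight across the various re-indexings; uniformly setting $m = n+1$ or $m = j+k+1$ at the start of each computation makes all four identities routine.
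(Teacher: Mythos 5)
Your proposal is correct and follows essentially the same route as the paper: a direct termwise computation of both sides of each identity (with the Beta integral $\int_0^\tau(\tau-s)^js^k\,ds=j!\,k!\,\tau^{j+k+1}/(j+k+1)!$ handling the convolution), matching coefficients of $\tau^m$ after reindexing. All the factorial bookkeeping you describe checks out.
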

\begin{proof} By a direct computation, we have the following expansions from which the proposition 10 follows.
\begin{multline*}
\partial_{t}\hat{X}(t) = \sum_{n \geq 0} a_{n+1}\frac{t^n}{n!}, ({\tau} \partial_{\tau}^{2} + \partial_{\tau})(\mathcal{B}(\hat{X})(\tau)) = 
\sum_{n \geq 0} a_{n+1} \frac{{\tau}^n}{(n!)^{2}},
t\hat{X}(t) = \sum_{n \geq 1} na_{n-1} \frac{t^{n}}{n!},\\ \partial_{\tau}^{-1}(\mathcal{B}(\hat{X}))(\tau) =
\sum_{n \geq 1} na_{n-1} \frac{{\tau}^{n}}{(n!)^2},
(t^{2}\partial_{t} + t)\hat{X}(t) = \sum_{n \geq 1} n^{2}a_{n-1}\frac{t^n}{n!}, \tau \mathcal{B}(\hat{X})(\tau) = \sum_{n \geq 1} n^{2}a_{n-1}\frac{{\tau}^n}{(n!)^2},\\
t\hat{X}(t)\hat{G}(t) = \sum_{n \geq 1} (\sum_{l+m=n-1} \frac{ n! }{l!m!}a_{l}b_{m})\frac{t^n}{n!},
 \int_{0}^{\tau}
(\mathcal{B}\hat{X})(\tau - s)(\mathcal{B}\hat{G})(s) ds = \sum_{n \geq 1} (\sum_{l+m=n-1} \frac{ n! }{l!m!}a_{l}b_{m})\frac{\tau^n}{(n!)^{2}}
\end{multline*}
\end{proof}

\subsection{Analytic solutions of some singular Cauchy problem}
Let $S,r_{1},r_{2} \geq 1$ be integers. Let $\mathcal{S}$ be a finite subset of
$\mathbb{N}^{4}$, $\mathcal{N}$ be a finite subset of $\mathbb{N}^{2}$. For all $(s,k_{0},k_{1},k_{2}) \in \mathcal{S}$,
all integers $\beta_{0},\beta_{l+1} \geq 0$, we denote
$b_{s,k_{0},k_{1},k_{2},\beta_{0},\beta_{l+1}}(\epsilon)$ some holomorphic function on $D(0,\epsilon_{0})$ which satisfies the next
estimates : there exist constants $\rho,\rho'>0$, $\mathfrak{b} _{s,k_{0},k_{1},k_{2}} > 0$ with
\begin{equation}
\sup_{\epsilon \in D(0,\epsilon_{0})}
|b_{s,k_{0},k_{1},k_{2},\beta_{0},\beta_{l+1}}(\epsilon)| \leq \mathfrak{b} _{s,k_{0},k_{1},k_{2}}
(\frac{e^{-\rho'}}{2})^{\beta_0}(\frac{1}{2 \rho})^{\beta_{l+1}} \beta_{0}! \beta_{l+1}! 
\end{equation}
for all $\beta_{0},\beta_{l+1} \geq 0$. Likewise, for all $(l_{0},l_{1}) \in \mathcal{N}$, all integers $\beta_{0},\beta_{l+1} \geq 0$, we denote
$c_{l_{0},l_{1},\beta_{0},\beta_{l+1}}(\epsilon)$ some holomorphic function on $D(0,\epsilon_{0})$ with the following estimates :
there exist a constant $\mathfrak{c} _{l_{0},l_{1}} > 0$ with
\begin{equation}
\sup_{\epsilon \in D(0,\epsilon_{0})}
|c_{l_{0},l_{1},\beta_{0},\beta_{l+1}}(\epsilon)| \leq \mathfrak{c} _{l_{0},l_{1}}
(\frac{e^{-\rho'}}{2})^{\beta_0}(\frac{1}{2 \rho})^{\beta_{l+1}} \beta_{0}! \beta_{l+1}!
\end{equation}
for all $\beta_{0},\beta_{l+1} \geq 0$.

For all $(s,k_{0},k_{1},k_{2}) \in \mathcal{S}$ and all $(l_{0},l_{1}) \in \mathcal{N}$, we consider the series
\begin{multline*}
 b_{s,k_{0},k_{1},k_{2}}(z,x,\epsilon) = \sum_{\beta_{0},\beta_{l+1} \geq 0} b_{s,k_{0},k_{1},k_{2},\beta_{0},\beta_{l+1}}(\epsilon)
\frac{e^{iz \beta_{0}}}{\beta_{0}!} \frac{x^{\beta_{l+1}}}{\beta_{l+1}!},\\
c_{l_{0},l_{1}}(z,x,\epsilon) = \sum_{\beta_{0},\beta_{l+1} \geq 0} c_{l_{0},l_{1},\beta_{0},\beta_{l+1}}(\epsilon)
\frac{e^{iz \beta_{0}}}{\beta_{0}!} \frac{x^{\beta_{l+1}}}{\beta_{l+1}!}
\end{multline*}
which define bounded holomorphic functions on $H_{\rho'} \times D(0,\rho) \times D(0,\epsilon_{0})$.\medskip

We consider the following singular Cauchy problem
\begin{multline}
( (T^{2}\partial_{T} + T)^{r_2} + (-i\partial_{z}+1)^{r_1}) \partial_{x}^{S} Y_{U_{d},D(0,\epsilon_{0}) \setminus \{ 0 \}}(T,z,x,\epsilon) \\
= \sum_{(s,k_{0},k_{1},k_{2}) \in \mathcal{S}} b_{s,k_{0},k_{1},k_{2}}(z,x,\epsilon) \epsilon^{r(k_{0}-s)}
T^{s}(\partial_{T}^{k_0}\partial_{z}^{k_1}\partial_{x}^{k_2}Y_{U_{d},D(0,\epsilon_{0}) \setminus \{ 0 \}})(T,z,x,\epsilon)\\
+ \sum_{(l_{0},l_{1}) \in \mathcal{N}} c_{l_{0},l_{1}}(z,x,\epsilon) \epsilon^{-r(l_{0}+l_{1}-1)}
T^{l_{0}+l_{1}-1}(Y_{U_{d},D(0,\epsilon_{0}) \setminus \{ 0 \}}(T,z,x,\epsilon))^{l_{1}} \label{SCP}
\end{multline}
for given initial data
\begin{equation}
(\partial_{x}^{j}Y_{U_{d},D(0,\epsilon_{0}) \setminus \{ 0 \}})(T,z,0,\epsilon) = Y_{U_{d},D(0,\epsilon_{0}) \setminus \{ 0 \},j}(T,z,\epsilon)
\ \ , \ \ 0 \leq j \leq S-1. \label{SCP_init_cond}
\end{equation}
The initial conditions are constructed as follows: Let $U_{d}$ be an unbounded sector such that
$$ \mathrm{arg}(\tau) \neq \frac{2k+1}{r_{2}}\pi $$
for all $0 \leq k \leq r_{2}-1$, all $\tau \in U_{d}$. For all $0 \leq j \leq S-1$, all $(\beta_{0},\ldots,\beta_{l}) \in \mathbb{N}^{l+1}$,
let $V_{U_{d},(\beta_{0},\ldots,\beta_{l},j)}(\tau,\epsilon)$ be a function such that
\begin{equation}
V_{U_{d},(\beta_{0},\ldots,\beta_{l},j)}(\tau,\epsilon) \in
E_{(\beta_{0},\ldots,\beta_{l},j),\epsilon,\sigma,r,D(0,\rho_{(\beta_{0},\ldots,\beta_{l},j)}) \cup U_{d}} \label{V_beta_j_defin}
\end{equation}
for all $\epsilon \in D(0,\epsilon_{0}) \setminus \{ 0 \}$.

We choose two real numbers $\rho_{1}',M^{0}>0$ such that
\begin{equation}
 M^{0} > 2(l+2) \exp( \rho_{1}' \max_{j=0}^{l}|\xi_{j}| ) \label{choice_M0_rho_1_prime}
\end{equation}
and we take $\bar{X}^{0}>0$ and $\bar{Z}_{j}^{0}>M^{0}$, for all $0 \leq j \leq l$. We make the assumption that
the formal series
\begin{equation}
\varphi_{d,j}(Z_{0},\ldots,Z_{l},\epsilon) = \sum_{\beta_{0},\ldots,\beta_{l} \geq 0}
||V_{U_{d},(\beta_{0},\ldots,\beta_{l},j)}(\tau,\epsilon)||_{(\beta_{0},\ldots,\beta_{l},j),\epsilon,\sigma,r,D(0,\rho_{(\beta_{0},\ldots,\beta_{l},j)}) \cup
U_{d}} \frac{ Z_{0}^{\beta_{0}} \cdots Z_{l}^{\beta_{l}} }{ \beta_{0}! \cdots \beta_{l}! } \label{varphi_j_with_norm_V_beta_j}
\end{equation}
belongs to the Banach space $G(\bar{Z}_{0}^{0},\ldots,\bar{Z}_{l}^{0},\bar{X}^{0})$ for all $\epsilon \in D(0,\epsilon_{0}) \setminus \{ 0 \}$,
all $0 \leq j \leq S-1$. Moreover, we assume that there exists a constant $C_{\varphi_{d,j}}>0$ such that
$\sup_{\epsilon \in D(0,\epsilon_{0}) \setminus \{ 0 \}}
||\varphi_{d,j}||_{(\bar{Z}_{0}^{0},\ldots,\bar{Z}_{l}^{0},\bar{X}^{0})} \leq C_{\varphi_{d,j}}$.

Let
$$ V_{U_{d},(\beta_{0},\ldots,\beta_{l},j)}(\tau,\epsilon) = \sum_{m \geq 0}
\frac{\chi_{m,(\beta_{0},\ldots,\beta_{l},j)}(\epsilon)}{(m!)^{2}} \tau^{m} $$
be its Taylor expansion with respect to $\tau$ on $D(0,\rho_{(\beta_{0},\ldots,\beta_{l},j)})$, for all
$\epsilon \in D(0,\epsilon_{0}) \setminus \{ 0 \}$. We consider the formal series
\begin{equation}
\hat{Y}_{(\beta_{0},\ldots,\beta_{l},j)}(T,\epsilon) = \sum_{m \geq 0}
\frac{\chi_{m,(\beta_{0},\ldots,\beta_{l},j)}(\epsilon)}{m!} T^{m} \label{hat_Y_beta_j_definition}
\end{equation}
for all $\epsilon \in D(0,\epsilon_{0}) \setminus \{ 0 \}$. For all $(\beta_{0},\ldots,\beta_{l},j)$ we define
$Y_{U_{d},D(0,\epsilon_{0}) \setminus \{ 0 \},(\beta_{0},\ldots,\beta_{l},j)}(T,\epsilon)$ as the $1-$sum of
$\hat{Y}_{(\beta_{0},\ldots,\beta_{l},j)}(T,\epsilon)$
in the direction $d$. From the fact that $\tau \mapsto V_{U_{d},(\beta_{0},\ldots,\beta_{l},j)}(\tau,\epsilon)$ belongs to 
$E_{(\beta_{0},\ldots,\beta_{l},j),\epsilon,\sigma,r,D(0,\rho_{(\beta_{0},\ldots,\beta_{l},j)}) \cup U_{d}}$, for all
$\epsilon \in D(0,\epsilon_{0}) \setminus \{ 0 \}$, we get that
$Y_{U_{d},D(0,\epsilon_{0}) \setminus \{ 0 \},(\beta_{0},\ldots,\beta_{l},j)}(T,\epsilon)$ defines a holomorphic function
for all $T \in U_{d,\theta,h|\epsilon|^{r}}$,
all $\epsilon \in D(0,\epsilon_{0}) \setminus \{ 0 \}$, where
$$ U_{d,\theta,h|\epsilon|^{r}} = \{ T \in \mathbb{C} : |T| < h|\epsilon|^{r} \ \ , \ \ |d - \mathrm{arg}(T) | < \theta/2 \},$$
for some $\theta > \pi$ and some constant $h>0$ (independent of $\epsilon$ and $\beta_{0},\ldots,\beta_{l}$), for all
$0 \leq j \leq S-1$. The initial data are defined as the formal series
\begin{multline}
Y_{U_{d},D(0,\epsilon_{0}) \setminus \{ 0 \},j}(T,z,\epsilon) \\
= \sum_{\beta_{0},\ldots,\beta_{l} \geq 0}
Y_{U_{d},D(0,\epsilon_{0}) \setminus \{ 0 \},(\beta_{0},\ldots,\beta_{l},j)}(T,\epsilon)
\frac{ \exp( iz (\sum_{j=0}^{l} \beta_{j} \xi_{j}) ) }{\beta_{0}! \cdots \beta_{l}!} \ \ , \ \ 0 \leq j \leq S-1, \label{defin_Y_Ud_j}
\end{multline}
which actually define holomorphic functions on the domain $U_{d,\theta,h|\epsilon|^{r}} \times H_{\rho_{1}'} \times
D(0,\epsilon_{0}) \setminus \{ 0 \}$.
Indeed from the hypotheses (\ref{choice_M0_rho_1_prime}), (\ref{varphi_j_with_norm_V_beta_j}) and the multinomial formula
(\ref{multin_ineq}), following the first part of the proof of Proposition 9, we get that there exists a constant $C_{16}>0$ such that
\begin{multline}
|\sum_{\beta_{0},\ldots,\beta_{l} \geq 0} V_{U_{d},(\beta_{0},\ldots,\beta_{l},j)}(\tau,\epsilon)
\frac{ \exp( iz (\sum_{j=0}^{l} \beta_{j} \xi_{j}) ) }{\beta_{0}! \cdots \beta_{l}!}| \\
\leq C_{16} (1 + \frac{|\tau|^2}{|\epsilon|^{2r}})^{-1} \exp( \frac{\sigma}{|\epsilon|^{r}} \zeta(b) |\tau| )\\
\times \sum_{\beta_{0},\ldots,\beta_{l} \geq 0}
(\frac{(l+1)\exp( \rho_{1}' \max_{j=0}^{l} |\xi_{j}| )}{M^{0}})^{\sum_{j=0}^{l} \beta_{j}}\\
\leq 2^{l+1}  C_{16} (1 + \frac{|\tau|^2}{|\epsilon|^{2r}})^{-1} \exp( \frac{\sigma}{|\epsilon|^{r}} \zeta(b) |\tau| )
\end{multline}
for all $\tau \in U_{d}$, $z \in H_{\rho_{1}'}$ and $\epsilon \in D(0,\epsilon_{0}) \setminus \{ 0 \}$. \medskip

\noindent We get the following result.

\begin{prop} Let the initial data constructed as above. We make the following assumptions.
For all $(s,k_{0},k_{1},k_{2}) \in \mathcal{S}$, all $(l_{0},l_{1}) \in \mathcal{N}$, we have that
\begin{equation}
s \geq 2k_{0} \ \ , \ \ S > k_{2} \ \ , \ \ S > b(s-k_{0}+2) + k_{2} \ \ , \ \ S \geq hr_{1} + b(s-k_{0}+2) + k_{1} + k_{2} \ \ , \ \
l_{1} \geq 2.  \label{shape_SCP}
\end{equation}
Then, there exists a constant $I>0$ (independent of $\epsilon$) such that if one assumes that
\begin{equation}
|| \varphi_{d,j}(Z_{0},\ldots,Z_{l},\epsilon)||_{(\bar{Z}_{0}^{0},\ldots,\bar{Z}_{l}^{0},\bar{X}^{0})} < I \label{norm_varphi_j<I}
\end{equation}
where $\varphi_{d,j}$ is defined in (\ref{varphi_j_with_norm_V_beta_j}), for all $\epsilon \in D(0,\epsilon_{0}) \setminus \{ 0 \}$, the problem
(\ref{SCP}), (\ref{SCP_init_cond}) has a solution
\begin{multline}
Y_{U_{d},D(0,\epsilon_{0}) \setminus \{ 0 \}}(T,z,x,\epsilon) \\
= \sum_{\beta_{0},\ldots,\beta_{l},\beta_{l+1} \geq 0}
Y_{U_{d},D(0,\epsilon_{0}) \setminus \{ 0 \},(\beta_{0},\ldots,\beta_{l},\beta_{l+1})}(T,\epsilon)
\frac{ \exp( iz (\sum_{j=0}^{l} \beta_{j} \xi_{j}) ) }{\beta_{0}! \cdots \beta_{l}!}\frac{x^{\beta_{l+1}}}{\beta_{l+1}!}
 \end{multline}
which defines a bounded holomorphic function on $U_{d,\theta,h'|\epsilon|^{r}} \times H_{\rho_{1}'} \times D(0,\rho_{1})$ for some $h'>0$
and $\rho_{1}>0$, for all $\epsilon \in D(0,\epsilon_{0}) \setminus \{ 0 \}$. Moreover, each function
$Y_{U_{d},D(0,\epsilon_{0}) \setminus \{ 0 \},(\beta_{0},\ldots,\beta_{l},\beta_{l+1})}(T,\epsilon)$ can be written as a Laplace transform
of order 1 in the direction $d$
of a function $\tau \mapsto V_{U_{d},(\beta_{0},\ldots,\beta_{l},\beta_{l+1})}(\tau,\epsilon)$ which is holomorphic on
$D(0,\rho_{(\beta_{0},\ldots,\beta_{l},\beta_{l+1})}) \cup U_{d}$ and satisfies the estimates: there exist two constants
$C_{17}>0$ , $K_{17}>0$ (both independent of $\epsilon$) such that
\begin{equation}
|V_{U_{d},(\beta_{0},\ldots,\beta_{l},\beta_{l+1})}(\tau,\epsilon)| \leq C_{17} \exp( \frac{\sigma}{|\epsilon|^r} \zeta(b) |\tau| )
(\frac{l+2}{M^{0}})^{\sum_{j=0}^{l} \beta_{j}} K_{17}^{\beta_{l+1}} \beta_{0}! \cdots \beta_{l+1}!
\end{equation}
for all $\tau \in D(0,\rho_{(\beta_{0},\ldots,\beta_{l},\beta_{l+1})}) \cup U_{d}$ and $\epsilon \in D(0,\epsilon_{0}) \setminus \{ 0 \}$.
\end{prop}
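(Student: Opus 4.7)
The strategy is to transform the singular Cauchy problem (\ref{SCP}), (\ref{SCP_init_cond}) into the global convolution Cauchy problem (\ref{GCP}), (\ref{init_cond_GCP}) studied in Section 2.3 by means of a formal Borel transform of order 1 in the time variable $T$, then apply Proposition 9 to obtain a holomorphic solution on the Borel side, and finally perform an order-1 Laplace transform along a half-line $L_{\gamma}\subset U_{d}\cup\{0\}$ to recover $Y_{U_{d},D(0,\epsilon_{0})\setminus\{0\}}$ coefficient by coefficient.

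Concretely, I would look for the solution $\hat{Y}$ in the Fourier/Taylor form
$$ \hat{Y}(T,z,x,\epsilon) = \sum_{\underline{\beta}\in\mathbb{N}^{l+2}} \hat{Y}_{\underline{\beta}}(T,\epsilon) \frac{\exp(iz\sum_{j=0}^{l}\beta_{j}\xi_{j})}{\beta_{0}!\cdots\beta_{l}!}\frac{x^{\beta_{l+1}}}{\beta_{l+1}!}, $$
with $\hat{Y}_{\underline{\beta}}(T,\epsilon)=\sum_{m\geq 0}\chi_{m,\underline{\beta}}(\epsilon)T^{m}/m!$, and associate with it the formal Borel transform
$$ \hat{V}(\tau,z,x,\epsilon) = \sum_{\underline{\beta}} \hat{V}_{\underline{\beta}}(\tau,\epsilon) \frac{\exp(iz\sum_{j=0}^{l}\beta_{j}\xi_{j})}{\beta_{0}!\cdots\beta_{l}!}\frac{x^{\beta_{l+1}}}{\beta_{l+1}!}, $$
with $\hat{V}_{\underline{\beta}}(\tau,\epsilon)=\sum_{m\geq 0}\chi_{m,\underline{\beta}}(\epsilon)\tau^{m}/(m!)^{2}$. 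Applying the commutation identities of Proposition 10 iteratively, $(T^{2}\partial_{T}+T)^{r_{2}}$ is converted into multiplication by $\tau^{r_{2}}$, each $T^{s}\partial_{T}^{k_{0}}$ (using $s\geq 2k_{0}$) rewrites as a finite sum of operators $\tau^{s_{1}}\partial_{\tau}^{-k_{0}}$ with nonnegative $s_{1}$, weighted by the powers of $\epsilon^{-s_{2}}$ inherited from $\epsilon^{r(k_{0}-s)}$, and each monomial $T^{l_{0}+l_{1}-1}Y^{l_{1}}$ becomes $\partial_{\tau}^{-l_{0}}$ acting on the $l_{1}$-fold convolution of $V$. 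The equation for $\hat{V}$ is thus of the exact shape (\ref{GCP}), its coefficients of type $a_{(k_{0},k_{1},k_{2})}$ and $\alpha_{(l_{0},l_{1})}$ being inherited from $b_{s,k_{0},k_{1},k_{2}}$ and $c_{l_{0},l_{1}}$; the shape hypotheses (\ref{shape_SCP}) translate precisely into the constraints (\ref{assum_B}) and (\ref{assum_C}) required by Propositions 7, 8 and 9.

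Next, set $S_{d}:=U_{d}$ in the framework of Section 2.3. The initial data $V_{U_{d},(\beta_{0},\ldots,\beta_{l},j)}$ in (\ref{V_beta_j_defin}) are those assumed in Proposition 9, and $\varphi_{d,j}$ defined in (\ref{varphi_j_with_norm_V_beta_j}) coincides with the $\varphi_{j}$ of that proposition. Taking $I:=\delta$, with $\delta>0$ the constant produced by Proposition 9 (depending only on the listed problem data, not on $\epsilon$), the hypothesis (\ref{norm_varphi_j<I}) implies (\ref{small_norm_init_cond_GCP}). Proposition 9 then yields a holomorphic function $V_{U_{d}}(\tau,z,x,\epsilon)$ on $U_{d}\times H_{\rho_{1}'}\times D(0,\rho_{1})\times D(0,\epsilon_{0})\setminus\{0\}$ whose Fourier/Taylor coefficients $V_{U_{d},\underline{\beta}}(\tau,\epsilon)$ are holomorphic on $D(0,\rho_{\underline{\beta}})\cup U_{d}$ and satisfy the pointwise bound (\ref{V_beta<}); this is precisely the estimate asserted in the proposition, with constants $C_{17}$, $K_{17}$ of the form $C_{15}\delta\,2^{l+2}$ and $(l+2)/\bar{X}^{1}$. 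To return to the $T$-variable, I would set
$$ Y_{U_{d},D(0,\epsilon_{0})\setminus\{0\},\underline{\beta}}(T,\epsilon) := T^{-1}\int_{L_{\gamma}} V_{U_{d},\underline{\beta}}(\tau,\epsilon)\,e^{-\tau/T}\,d\tau, $$
which, thanks to the exponential bound $\exp(\sigma\zeta(b)|\tau|/|\epsilon|^{r})$ on $V_{U_{d},\underline{\beta}}$, is holomorphic on a sector $U_{d,\theta,h'|\epsilon|^{r}}$ for some $h'>0$ strictly smaller than a suitable multiple of $1/(\sigma\zeta(b))$. The geometric control of $V_{U_{d},\underline{\beta}}$ in $\underline{\beta}$ forces absolute convergence of the Fourier/Taylor sum defining $Y_{U_{d},D(0,\epsilon_{0})\setminus\{0\}}$ on $U_{d,\theta,h'|\epsilon|^{r}}\times H_{\rho_{1}'}\times D(0,\rho_{1})$, possibly after shrinking $\rho_{1}$; and reading back the commutation rules of Proposition 10 shows that the resulting function solves (\ref{SCP}), (\ref{SCP_init_cond}).

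The principal obstacle is the Borel step itself: expressing each $T^{s}\partial_{T}^{k_{0}}$ as a sum of operators of the prescribed shape $\tau^{s_{1}}\partial_{\tau}^{-k_{0}}\epsilon^{-s_{2}}$ respecting the positivity constraint $r(s_{1}+k_{0})\geq s_{2}$ is delicate, and it is precisely here that the assumption $s\geq 2k_{0}$ enters, to guarantee nonnegativity of every emerging $s_{1}$. A secondary but more tedious difficulty is keeping track of the dependence on the numerous auxiliary radii $\rho_{1}',\bar{X}^{0},\bar{X}^{1},\bar{Z}_{j}^{0},\bar{Z}_{j}^{1}$ and verifying that the smallness constant $I$ and the sector opening parameter $h'$ may be chosen uniformly in $\epsilon\in D(0,\epsilon_{0})\setminus\{0\}$; this uniformity is inherited from the corresponding uniformity of $\delta$ in Proposition 9.
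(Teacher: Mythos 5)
Your proposal is correct and follows essentially the same route as the paper: formal Borel transform of order 1 in $T$ via the identities of Proposition 10, rewriting of the operators $\partial_{\tau}^{-s}(\tau\partial_{\tau}^{2}+\partial_{\tau})^{k_{0}}$ using $s\geq 2k_{0}$ (the paper's Lemmas 7 and 8, which yield terms $\tau^{r'}\partial_{\tau}^{-p'}$ with $r'+p'=s-k_{0}$ rather than a fixed integration order $k_{0}$, a minor imprecision in your write-up), reduction to the global problem of Section 2.3 with $S_{d}=U_{d}$ and $I=\delta$ from Proposition 9, and Laplace transform of the coefficient bounds (\ref{V_Ud_beta<}) back to the $T$-variable.
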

\begin{proof} One considers a formal series
$$
\hat{Y}(T,z,x,\epsilon) = \sum_{\beta_{0},\ldots,\beta_{l},\beta_{l+1} \geq 0} \hat{Y}_{(\beta_{0},\ldots,\beta_{l},\beta_{l+1})}(T,\epsilon)
\frac{ \exp( iz (\sum_{j=0}^{l} \beta_{j} \xi_{j}) ) }{\beta_{0}! \cdots \beta_{l}!}\frac{x^{\beta_{l+1}}}{\beta_{l+1}!}
$$
solution of the equation (\ref{SCP}), with initial data
\begin{multline}
(\partial_{x}^{j}\hat{Y})(T,z,0,\epsilon) =  \hat{Y}_{j}(T,z,\epsilon) \\
= \sum_{\beta_{0},\ldots,\beta_{l} \geq 0}
\hat{Y}_{(\beta_{0},\ldots,\beta_{l},j)}(T,\epsilon)
\frac{ \exp( iz (\sum_{j=0}^{l} \beta_{j} \xi_{j}) ) }{\beta_{0}! \cdots \beta_{l}!} \ \ , \ \ 0 \leq j \leq S-1, \label{SCP_init_cond_formal}
\end{multline}
for all $\epsilon \in D(0,\epsilon_{0}) \setminus \{ 0 \}$ where $\hat{Y}_{(\beta_{0},\ldots,\beta_{l},j)}(T,\epsilon)$ are defined in
(\ref{hat_Y_beta_j_definition}). We consider the formal Borel transform of $\hat{Y}(T,z,x,\epsilon)$ of order 1 with
respect to $T$ denoted by
$$ \hat{V}(\tau,z,x,\epsilon) = \sum_{\beta_{0},\ldots,\beta_{l},\beta_{l+1} \geq 0}
\hat{V}_{(\beta_{0},\ldots,\beta_{l},\beta_{l+1})}(\tau,\epsilon) \frac{ \exp( iz (\sum_{j=0}^{l}
\beta_{j} \xi_{j}) ) }{\beta_{0}! \cdots \beta_{l}!}\frac{x^{\beta_{l+1}}}{\beta_{l+1}!} $$
where, by construction, $\hat{V}_{(\beta_{0},\ldots,\beta_{l},\beta_{l+1})}(\tau,\epsilon)$ is the Borel transform of order 1 with respect
to $T$ of the formal series $\hat{Y}_{(\beta_{0},\ldots,\beta_{l},\beta_{l+1})}(T,\epsilon)$, for all
$\epsilon \in D(0,\epsilon_{0}) \setminus \{ 0 \}$.

>From the identities of Proposition 10, we get that $\hat{V}(\tau,z,x,\epsilon)$ satisfies the following singular Cauchy problem
\begin{multline}
(\tau^{r_2} + (-i\partial_{z}+1)^{r_1})\partial_{x}^{S}\hat{V}(\tau,z,x,\epsilon) =\\
\sum_{(s,k_{0},k_{1},k_{2}) \in \mathcal{S}} b_{s,k_{0},k_{1},k_{2}}(z,x,\epsilon) \epsilon^{r(k_{0}-s)}
\partial_{\tau}^{-s}(\tau \partial_{\tau}^{2} + \partial_{\tau})^{k_0}\partial_{z}^{k_1}
\partial_{x}^{k_2}\hat{V}(\tau,z,x,\epsilon) \\
+ \sum_{(l_{0},l_{1}) \in \mathcal{N}} c_{l_{0},l_{1}}(z,x,\epsilon) \epsilon^{-r(l_{0}+l_{1}-1)}
\partial_{\tau}^{-l_0}(\hat{V}(\tau,z,x,\epsilon))^{\ast l_{1}} \label{Borel_SCP}
\end{multline}
with initial data
\begin{equation}
(\partial_{x}^{j}\hat{V})(\tau,z,0,\epsilon) = \sum_{\beta_{0},\ldots,\beta_{l} \geq 0}
V_{(\beta_{0},\ldots,\beta_{l},j)}(\tau,\epsilon)
\frac{ \exp( iz (\sum_{j=0}^{l} \beta_{j} \xi_{j}) ) }{\beta_{0}! \cdots \beta_{l}!} \ \ , \ \ 0 \leq j \leq S-1, \label{Borel_SCP_init_cond}
\end{equation}
where $V_{(\beta_{0},\ldots,\beta_{l},j)}(\tau,\epsilon)$ are defined in (\ref{V_beta_j_defin}), for all
$\epsilon \in D(0,\epsilon_{0}) \setminus \{ 0 \}$. In the following, we rewrite the equation (\ref{Borel_SCP}) using the two following technical
lemma. Their proofs can be found in \cite{ma1}, Lemma 5 and Lemma 6. Therefore we omit them.

\begin{lemma} For all $k_{0} \geq 1$, there exist constants $a_{k,k_{0}} \in \mathbb{N}$, $k_{0} \leq k \leq 2k_{0}$, such that
\begin{equation}
(\tau \partial_{\tau}^{2} + \partial_{\tau})^{k_0}u(\tau) = \sum_{k=k_{0}}^{2k_{0}} a_{k,k_{0}} \tau^{k-k_{0}} \partial_{\tau}^{k}u(\tau) 
\end{equation}
for all holomorphic functions $u : \Omega \rightarrow \mathbb{C}$ on an open set $\Omega \subset \mathbb{C}$.
 \end{lemma}

\begin{lemma} Let $a,b,c \geq 0$ be positive integers such that $a \geq b$ and $a \geq c$. We put $\delta = a+b-c$. Then, for all
holomorphic functions
$u : \Omega \rightarrow \mathbb{C}$, the function $\partial_{\tau}^{-a}(\tau^{b}\partial_{\tau}^{c}u(\tau))$ can be written in the form
$$ \partial_{\tau}^{-a}(\tau^{b}\partial_{\tau}^{c}u(\tau)) = \sum_{(b',c') \in \mathcal{O}_{\delta}} \alpha_{b',c'} \tau^{b'}
\partial_{\tau}^{c'}u(\tau) $$
where $\mathcal{O}_{\delta}$ is a finite subset of $\mathbb{Z}^{2}$ such that for all $(b',c') \in \mathcal{O}_{\delta}$, $b' - c' = \delta$, 
$b' \geq 0$, $c' \leq 0$, and $\alpha_{b',c'} \in \mathbb{Z}$.
\end{lemma}

Using the latter Lemma 7 and Lemma 8 together with the assumption (\ref{shape_SCP}), we can rewrite the equation
(\ref{Borel_SCP}) in the form
\begin{multline}
(\tau^{r_2} + (-i\partial_{z}+1)^{r_1})\partial_{x}^{S}\hat{V}(\tau,z,x,\epsilon) =\\
\sum_{(s,k_{0},k_{1},k_{2}) \in \mathcal{S}} b_{s,k_{0},k_{1},k_{2}}(z,x,\epsilon) \epsilon^{r(k_{0}-s)}
(\sum_{(r',p') \in \mathcal{O}_{s-k_{0}}} \alpha_{r',p'} \tau^{r'}\partial_{\tau}^{-p'}\partial_{z}^{k_1}
\partial_{x}^{k_2}\hat{V}(\tau,z,x,\epsilon)) \\
+ \sum_{(l_{0},l_{1}) \in \mathcal{N}} c_{l_{0},l_{1}}(z,x,\epsilon) \epsilon^{-r(l_{0}+l_{1}-1)}
\partial_{\tau}^{-l_0}(\hat{V}(\tau,z,x,\epsilon))^{\ast l_{1}} \label{Borel_SCP_modif}
\end{multline}
where $\mathcal{O}_{s-k_{0}}$ is a finite subset of $\mathbb{N}^{2}$ such that for all $(r',p') \in \mathcal{O}_{s-k_{0}}$, we have
$r'+p'=s-k_{0}$ and $\alpha_{r',p'} \in \mathbb{Z}$, for the given initial data
\begin{equation}
(\partial_{x}^{j}\hat{V})(\tau,z,0,\epsilon) = \sum_{\beta_{0},\ldots,\beta_{l} \geq 0}
V_{(\beta_{0},\ldots,\beta_{l},j)}(\tau,\epsilon)
\frac{ \exp( iz (\sum_{j=0}^{l} \beta_{j} \xi_{j}) ) }{\beta_{0}! \cdots \beta_{l}!} \ \ , \ \ 0 \leq j \leq S-1, \label{Borel_SCP_init_cond_modif}
\end{equation}

>From the assumption (\ref{shape_SCP}), we deduce that the assumptions (\ref{assum_B}) and (\ref{assum_C}) of Proposition 9 are fulfilled 
for the equation (\ref{Borel_SCP_modif}). Hence, from Proposition 9, we deduce the existence of a constant $I>0$
(independent of $\epsilon$) such that
if the inequality (\ref{norm_varphi_j<I}) holds, then the formal series $\hat{V}(\tau,z,x,\epsilon)$ solution of
(\ref{Borel_SCP_modif}), (\ref{Borel_SCP_init_cond_modif}) defines a holomorphic function $V_{U_d}(\tau,z,x,\epsilon)$ on the product
$U_{d} \times H_{\rho_{1}'} \times D(0,\rho_{1}) \times D(0,\epsilon_{0}) \setminus \{ 0 \}$ which satisfies the next bound
estimates : there
exists a constant $C_{18}>0$ such that
\begin{equation}
|V_{U_d}(\tau,z,x,\epsilon)| \leq C_{18}(1 + \frac{|\tau|^2}{|\epsilon|^{2r}})^{-1}\exp(\frac{\sigma}{|\epsilon|^{r}}\zeta(b)|\tau|) 
\end{equation}
for all $(\tau,z,x,\epsilon) \in U_{d} \times H_{\rho_{1}'} \times D(0,\rho_{1}) \times D(0,\epsilon_{0}) \setminus \{ 0 \}$. Moreover,
from the proof of Proposition 9 (especially the formula (\ref{V_beta<})), we also get that each formal series
$\hat{V}_{(\beta_{0},\ldots,\beta_{l},\beta_{l+1})}(\tau,\epsilon)$ defines a holomorphic function
$V_{U_{d},(\beta_{0},\ldots,\beta_{l},\beta_{l+1})}(\tau,\epsilon)$ on
$(U_{d} \cup D(0,\rho_{(\beta_{0},\ldots,\beta_{l+1})})) \times D(0,\epsilon_{0}) \setminus \{ 0 \}$ with the following estimates : there
exist two constants $C_{19}>0$, $K_{19}>0$ such that
\begin{multline}
|V_{U_{d},(\beta_{0},\ldots,\beta_{l},\beta_{l+1})}(\tau,\epsilon)| \\
\leq IC_{19} \exp( \frac{\sigma}{|\epsilon|^r}
r_{b}((\beta_{0},\ldots,\beta_{l+1})) |\tau| )
(\frac{l+2}{M^{0}})^{\sum_{j=0}^{l} \beta_{j}} K_{19}^{\beta_{l+1}} \beta_{0}! \cdots \beta_{l+1}! \label{V_Ud_beta<}
\end{multline}
for all $\tau \in D(0,\rho_{(\beta_{0},\ldots,\beta_{l+1})}) \cup U_{d}$ and $\epsilon \in D(0,\epsilon_{0}) \setminus \{ 0 \}$. From
(\ref{V_Ud_beta<}), we get that each formal series $\hat{Y}_{(\beta_{0},\ldots,\beta_{l},\beta_{l+1})}(T,\epsilon)$ is
$1$-summable with respect to $T$ is the direction $d$ and its $1-$sum denoted by
$Y_{U_{d},D(0,\epsilon_{0}) \setminus \{ 0 \},(\beta_{0},\ldots,\beta_{l},\beta_{l+1})}(T,\epsilon)$ can be written as Laplace transform
of order 1 in the direction $d$ of the function $\tau \mapsto V_{U_{d},(\beta_{0},\ldots,\beta_{l},\beta_{l+1})}(\tau,\epsilon)$. Moreover,
again by (\ref{V_Ud_beta<}), we deduce that the series
\begin{multline}
Y_{U_{d},D(0,\epsilon_{0}) \setminus \{ 0 \}}(T,z,x,\epsilon) \\
= \sum_{\beta_{0},\ldots,\beta_{l},\beta_{l+1} \geq 0}
Y_{U_{d},D(0,\epsilon_{0}) \setminus \{ 0 \},(\beta_{0},\ldots,\beta_{l},\beta_{l+1})}(T,\epsilon)
\frac{ \exp( iz (\sum_{j=0}^{l} \beta_{j} \xi_{j}) ) }{\beta_{0}! \cdots \beta_{l}!}\frac{x^{\beta_{l+1}}}{\beta_{l+1}!}
 \end{multline}
defines a bounded holomorphic function on $U_{d,\theta,h'|\epsilon|^{r}} \times H_{\rho_{1}'} \times D(0,\rho_{1})$ for some $h'>0$
and $\rho_{1}>0$, for all $\epsilon \in D(0,\epsilon_{0}) \setminus \{ 0 \}$. Finally, from the algebraic properties of the $\kappa-$summation
procedure, see \cite{ba} Section 6.3, since $\hat{Y}(T,z,x,\epsilon)$ formally solves (\ref{SCP}),
we deduce that $Y_{U_{d},D(0,\epsilon_{0}) \setminus \{ 0 \}}(T,z,x,\epsilon)$ is an actual solution of
the Cauchy problem (\ref{SCP}), (\ref{SCP_init_cond}).
\end{proof}

\section{Formal series solutions and Gevrey asymptotic expansion in a complex parameter for the main Cauchy problem}

\subsection{Analytic solutions in a complex parameter for the main Cauchy problem}

We recall the definition of a good covering.

\begin{defin} Let $\nu \geq 2$ be an integer. For all $0 \leq i \leq \nu-1$, we consider an open sector $\mathcal{E}_{i}$ with vertex at 0
and with radius $\epsilon_{0}$. We assume that these sectors are three by three disjoint and that
$\mathcal{E}_{i+1} \cap \mathcal{E}_{i} \neq \emptyset$, for all $0 \leq i \leq \nu-1$, where by convention we define
$\mathcal{E}_{\nu}=\mathcal{E}_{0}$. Moreover,
we assume that $\cup_{0 \leq i \leq \nu-1} \mathcal{E}_{i} = \mathcal{U} \setminus \{ 0 \}$ where $\mathcal{U}$ is some neighborhood
of 0 in $\mathbb{C}$. Such a set of sectors $\{ \mathcal{E}_{i} \}_{0 \leq i \leq \nu-1}$ is called a good covering in $\mathbb{C}^{\ast}$.
\end{defin}

\begin{defin} Let $\{ \mathcal{E}_{i} \}_{0 \leq i \leq \nu-1}$ be a good covering in $\mathbb{C}^{\ast}$. Let $r>0$ be a positive real number
and $r_{2} \geq 1$ be some integer. Let $\mathcal{T}$ be an open sector with vertex at 0 with radius $r_{\mathcal{T}}>0$. We consider
the following family of open sectors
$$ U_{d_{i},\theta,\epsilon_{0}^{r}r_{\mathcal{T}}} = \{ t \in \mathbb{C}^{\ast} : |t|<\epsilon_{0}^{r}r_{\mathcal{T}} \ \ , \ \
|d_{i} - \mathrm{arg}(t)| < \theta/2 \} $$
where $d_{i} \in \mathbb{R}$, $0 \leq i \leq \nu-1$ and $\theta > \pi$, which satisfy the following properties:\medskip

\noindent 1) For all $0 \leq i \leq \nu-1$, $d_{i} \neq \pi \frac{2k+1}{r_{2}}$ for all $0 \leq k \leq r_{2}-1$.\\
\noindent 2) For all $0 \leq i \leq \nu-1$, all $t \in \mathcal{T}$, all $\epsilon \in \mathcal{E}_{i}$, we have that
$\epsilon^{r} t \in U_{d_{i},\theta,\epsilon_{0}^{r}r_{\mathcal{T}}}$.\medskip

Under the above settings, we say that the family $\{ \{U_{d_{i},\theta,\epsilon_{0}^{r}r_{\mathcal{T}}} \}_{0 \leq i \leq \nu-1}, \mathcal{T} \}$
is associated to the good covering $\{ \mathcal{E}_{i} \}_{0 \leq i \leq \nu-1}$.
\end{defin}

Let $S \geq 1$ be an integer. Let $\mathcal{S}$ be a finite subset of $\mathbb{N}^{4}$, $\mathcal{N}$ be a finite
subset of $\mathbb{N}^{2}$. 

As in the previous section, for all $(s,k_{0},k_{1},k_{2}) \in \mathcal{S}$, all integers $\beta_{0},\beta_{l+1} \geq 0$, we denote
$b_{s,k_{0},k_{1},k_{2},\beta_{0},\beta_{l+1}}(\epsilon)$ some holomorphic function on $D(0,\epsilon_{0})$ which satisfies the next
estimates : there exist constants $\rho,\rho'>0$, $\mathfrak{b} _{s,k_{0},k_{1},k_{2}} > 0$ with
\begin{equation}
\sup_{\epsilon \in D(0,\epsilon_{0})}
|b_{s,k_{0},k_{1},k_{2},\beta_{0},\beta_{l+1}}(\epsilon)| \leq \mathfrak{b} _{s,k_{0},k_{1},k_{2}}
(\frac{e^{-\rho'}}{2})^{\beta_0}(\frac{1}{2 \rho})^{\beta_{l+1}} \beta_{0}! \beta_{l+1}! \label{b_sk_beta_decay}
\end{equation}
for all $\beta_{0},\beta_{l+1} \geq 0$. Likewise, for all $(l_{0},l_{1}) \in \mathcal{N}$, all integers $\beta_{0},\beta_{l+1} \geq 0$, we denote
$c_{l_{0},l_{1},\beta_{0},\beta_{l+1}}(\epsilon)$ some holomorphic function on $D(0,\epsilon_{0})$ with the following estimates :
there exist a constant $\mathfrak{c} _{l_{0},l_{1}} > 0$ with
\begin{equation}
\sup_{\epsilon \in D(0,\epsilon_{0})}
|c_{l_{0},l_{1},\beta_{0},\beta_{l+1}}(\epsilon)| \leq \mathfrak{c} _{l_{0},l_{1}}
(\frac{e^{-\rho'}}{2})^{\beta_0}(\frac{1}{2 \rho})^{\beta_{l+1}} \beta_{0}! \beta_{l+1}! \label{c_l_beta_decay}
\end{equation}
for all $\beta_{0},\beta_{l+1} \geq 0$.

For all $(s,k_{0},k_{1},k_{2}) \in \mathcal{S}$ and all $(l_{0},l_{1}) \in \mathcal{N}$, we consider the series
\begin{multline*}
 b_{s,k_{0},k_{1},k_{2}}(z,x,\epsilon) = \sum_{\beta_{0},\beta_{l+1} \geq 0} b_{s,k_{0},k_{1},k_{2},\beta_{0},\beta_{l+1}}(\epsilon)
\frac{e^{iz \beta_{0}}}{\beta_{0}!} \frac{x^{\beta_{l+1}}}{\beta_{l+1}!},\\
c_{l_{0},l_{1}}(z,x,\epsilon) = \sum_{\beta_{0},\beta_{l+1} \geq 0} c_{l_{0},l_{1},\beta_{0},\beta_{l+1}}(\epsilon)
\frac{e^{iz \beta_{0}}}{\beta_{0}!} \frac{x^{\beta_{l+1}}}{\beta_{l+1}!}
\end{multline*}
which define bounded holomorphic functions on $H_{\rho'} \times D(0,\rho) \times D(0,\epsilon_{0})$.
Let $\{ \mathcal{E}_{i} \}_{0 \leq i \leq \nu-1}$ be a good covering in $\mathbb{C}^{\ast}$ and let
$r_{1},r_{2},r_{3} \geq 1$ be three integers. We put $r=r_{3}/r_{2}$.

For all $0 \leq i \leq \nu-1$, we consider the following Cauchy problem
\begin{multline}
( \epsilon^{r_{3}}(t^{2}\partial_{t} + t)^{r_2} + (-i\partial_{z}+1)^{r_1}) \partial_{x}^{S}X_{i}(t,z,x,\epsilon) \\
= \sum_{(s,k_{0},k_{1},k_{2}) \in \mathcal{S}} b_{s,k_{0},k_{1},k_{2}}(z,x,\epsilon)
t^{s}(\partial_{t}^{k_0}\partial_{z}^{k_1}\partial_{x}^{k_2}X_{i})(t,z,x,\epsilon)\\
+ \sum_{(l_{0},l_{1}) \in \mathcal{N}} c_{l_{0},l_{1}}(z,x,\epsilon)
t^{l_{0}+l_{1}-1}(X_{i}(t,z,x,\epsilon))^{l_{1}} \label{main_CP}
\end{multline}
for given initial data
\begin{equation}
(\partial_{x}^{j}X_{i})(t,z,0,\epsilon) = \Xi_{i,j}(t,z,\epsilon) \ \ , \ \ 0 \leq j \leq S-1 \label{main_CP_init_cond}
\end{equation}
where the functions $\Xi_{i,j}$ are constructed as follows. We consider a family of sectors\\
$\{ \{U_{d_{i},\theta,\epsilon_{0}^{r}r_{\mathcal{T}}} \}_{0 \leq i \leq \nu-1}, \mathcal{T} \}$ associated to the good covering
$\{ \mathcal{E}_{i} \}_{0 \leq i \leq \nu-1}$. For all $0 \leq i \leq \nu-1$, let $U_{d_i}$ be an unbounded open sector centered at 0, with bisecting
direction $d_{i}$ and with aperture $n_{i} > \theta - \pi$. We choose $\theta$ and $n_{i}$ in such a way that
$$ \mathrm{arg}(\tau) \neq \pi \frac{2k+1}{r_{2}} $$
for all $\tau \in U_{d_i}$, all $0 \leq i \leq \nu-1$, all $0 \leq k \leq r_{2}-1$. For all $0 \leq i \leq \nu-1$, all $0 \leq j \leq S-1$, we define
$$ \Xi_{i,j}(t,z,\epsilon) = Y_{U_{d_i},D(0,\epsilon_{0})\setminus \{ 0 \},j}(\epsilon^{r}t,z,\epsilon) $$
where $Y_{U_{d_i},D(0,\epsilon_{0})\setminus \{ 0 \},j}(T,z,\epsilon)$ is given by the expression (\ref{defin_Y_Ud_j})
and constructed as in the beginning of Section 3.2 with the help of a family of functions
$V_{U_{d_i},(\beta_{0},\ldots,\beta_{l},j)}(\tau,\epsilon)$, for $(\beta_{0},\ldots,\beta_{l}) \in \mathbb{N}^{l+1}$, $0 \leq j \leq S-1$,
satisfying (\ref{V_beta_j_defin}).

We make the additional assumption that for all $(\beta_{0},\ldots,\beta_{l}) \in \mathbb{N}^{l+1}$, $0 \leq j \leq S-1$,
there exists a holomorphic function 
$\tau \mapsto V_{(\beta_{0},\ldots,\beta_{l},j)}(\tau,\epsilon)$ on $D(0,\rho_{(\beta_{0},\ldots,\beta_{l+1})})$ for all
$\epsilon \in D(0,\epsilon_{0}) \setminus \{ 0 \}$ such that
\begin{equation}
V_{U_{d_i},(\beta_{0},\ldots,\beta_{l},j)}(\tau,\epsilon) = V_{(\beta_{0},\ldots,\beta_{l},j)}(\tau,\epsilon) \label{V_Udi_j_analyt_cont_V_j}
\end{equation}
for all $0 \leq i \leq \nu-1$, all $0 \leq j \leq S-1$, all $\tau \in D(0,\rho_{(\beta_{0},\ldots,\beta_{l+1})})$, all
$\epsilon \in D(0,\epsilon_{0}) \setminus \{ 0 \}$.

Moreover, we assume that the series $\varphi_{d_{i},j}(Z_{0},\ldots,Z_{l},\epsilon)$ defined in
(\ref{varphi_j_with_norm_V_beta_j}) belong to the Banach space $G(\bar{Z}_{0}^{0},\ldots,\bar{Z}_{l}^{0},\bar{X}^{0})$
for all $\epsilon \in D(0,\epsilon_{0}) \setminus \{ 0 \}$ where $\bar{X}^{0}>0$ and $\bar{Z}_{j}^{0}$, $0 \leq j \leq l$, are chosen in
such a way that $\bar{Z}_{j}^{0}>M^{0}$ for $M^{0}>0$ that fulfills the inequality (\ref{choice_M0_rho_1_prime}) for some
real number $\rho_{1}'>0$.

By construction, $\Xi_{i,j}(t,z,\epsilon)$ defines a holomorphic function on $\mathcal{T} \times H_{\rho_{1}'} \times \mathcal{E}_{i}$, for
all $0 \leq i \leq \nu-1$, all $0 \leq j \leq S-1$, for well chosen radius $r_{\mathcal{T}}>0$ and aperture $\theta$.

\begin{prop} Let the initial data (\ref{main_CP_init_cond}) constructed as above. We make the following assumptions:
for all $(s,k_{0},k_{1},k_{2}) \in \mathcal{S}$, all $(l_{0},l_{1}) \in \mathcal{N}$, we have that
\begin{equation}
s \geq 2k_{0} \ \ , \ \ S > k_{2} \ \ , \ \ S > b(s-k_{0}+2) + k_{2} \ \ , \ \ S \geq hr_{1} + b(s-k_{0}+2) + k_{1} + k_{2} \ \ , \ \
l_{1} \geq 2.  \label{shape_main_CP}
\end{equation}
Then, there exists a constant $I>0$ (independent of $\epsilon$) such that if one assumes that
\begin{equation}
|| \varphi_{d_{i},j}(Z_{0},\ldots,Z_{l},\epsilon)||_{(\bar{Z}_{0}^{0},\ldots,\bar{Z}_{l}^{0},\bar{X}^{0})} < I
\label{norm_varphi_j_d_i<I}
\end{equation}
for all $0 \leq i \leq \nu-1$, for all $\epsilon \in D(0,\epsilon_{0}) \setminus \{ 0 \}$, the problem
(\ref{main_CP}), (\ref{main_CP_init_cond}) has a solution $X_{i}(t,z,x,\epsilon)$ which is holomorphic and bounded on
$(\mathcal{T} \cap D(0,h')) \times H_{\rho_{1}'} \times D(0,\rho_{1}) \times \mathcal{E}_{i}$, for some $\rho_{1}>0$.
Moreover, there exist constants $K_{23},M_{23}>0$ and $0<h''<h'$ such that
\begin{equation}
\sup_{t \in \mathcal{T} \cap D(0,h''),z \in H_{\rho_{1}'},x \in D(0,\rho_{1})} |X_{i+1}(t,z,x,\epsilon) - X_{i}(t,z,x,\epsilon)| \leq K_{23}
\exp( - \frac{M_{23}}{|\epsilon|^{r_{3}/(hr_{1}+r_{2})}} )
\label{|X_i+1_minus_X_i|<}
\end{equation}
for all $\epsilon \in \mathcal{E}_{i+1} \cap \mathcal{E}_{i}$, all $0 \leq i \leq \nu-1$, (where by convention $X_{\nu} = X_{0}$),
provided that $\epsilon_{0}>0$ is small enough.
\end{prop}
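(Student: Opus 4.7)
The plan is to construct the $X_i$ directly from the solutions $Y_{U_{d_i},D(0,\epsilon_{0})\setminus\{0\}}$ provided by Proposition 11, via the substitution $T=\epsilon^{r_3/r_2}t$. Specifically, I would set
$$X_i(t,z,x,\epsilon) \;=\; Y_{U_{d_i},D(0,\epsilon_{0})\setminus\{0\}}\!\bigl(\epsilon^{r_3/r_2}t,\,z,\,x,\,\epsilon\bigr).$$
Condition 2) of Definition 4 guarantees $\epsilon^{r_3/r_2}t \in U_{d_i,\theta,\epsilon_0^r r_{\mathcal{T}}}$ for every $(t,\epsilon)\in\mathcal{T}\times\mathcal{E}_i$, so $X_i$ is holomorphic and bounded on the stated domain for an appropriate $h'>0$. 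A direct chain-rule computation using the identity $\epsilon^{r_3}(t^2\partial_t+t)^{r_2}=(T^2\partial_T+T)^{r_2}|_{T=\epsilon^{r_3/r_2}t}$ pulls (\ref{SCP}) back to (\ref{main_CP}); the matching of (\ref{main_CP_init_cond}) is built in by the definition $\Xi_{i,j}(t,z,\epsilon)=Y_{U_{d_i},D(0,\epsilon_0)\setminus\{0\},j}(\epsilon^{r_3/r_2}t,z,\epsilon)$.

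The substantive part is the bound (\ref{|X_i+1_minus_X_i|<}), which I would establish mode by mode. For each $\underline{\beta}\in\mathbb{N}^{l+2}$, Proposition 11 represents $Y_{U_{d_i},D(0,\epsilon_0)\setminus\{0\},\underline{\beta}}(T,\epsilon)$ as an order-$1$ Laplace transform along $L_{d_i}$ of $V_{U_{d_i},\underline{\beta}}(\tau,\epsilon)$, holomorphic on $U_{d_i}\cup D(0,\rho_{\underline{\beta}})$. Propagating the analytic continuation assumption (\ref{V_Udi_j_analyt_cont_V_j}) along the recursion (\ref{recursion_V_beta}), one finds $V_{U_{d_{i+1}},\underline{\beta}}=V_{U_{d_i},\underline{\beta}}$ on the common disc $D(0,\rho_{\underline{\beta}})$. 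Hence a Cauchy-type contour deformation rewrites
$$X_{i+1,\underline{\beta}}(t,\epsilon)-X_{i,\underline{\beta}}(t,\epsilon)=\frac{1}{\epsilon^{r_3/r_2}t}\int_{\mathcal{C}_{\underline{\beta}}}V_{U_{\cdot},\underline{\beta}}(\tau,\epsilon)\,e^{-\tau/(\epsilon^{r_3/r_2}t)}\,d\tau,$$
where $\mathcal{C}_{\underline{\beta}}$ consists of the arc of radius $\rho_{\underline{\beta}}$ together with the truncated halflines $L_{d_{i+1}}\setminus D(0,\rho_{\underline{\beta}})$ and $L_{d_i}\setminus D(0,\rho_{\underline{\beta}})$. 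Combined with the exponential bounds (\ref{exp_bounds_V_i_beta_intro})/(\ref{V_Ud_beta<}) for $V_{U_{d_i},\underline{\beta}}$ and a choice of $h''>0$ small enough that the factor $\exp(\sigma\zeta(b)|\tau|/|\epsilon|^{r_3/r_2})$ is dominated by the Laplace kernel on the halfline parts, this yields
$$|X_{i+1,\underline{\beta}}(t,\epsilon)-X_{i,\underline{\beta}}(t,\epsilon)|\leq C\,K^{|\underline{\beta}|}\beta_0!\cdots\beta_{l+1}!\exp\!\left(-\frac{c\,\rho_{\underline{\beta}}}{|\epsilon|^{r_3/r_2}}\right)$$
uniformly in $(t,x)\in (\mathcal{T}\cap D(0,h''))\times D(0,\rho_1)$.

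Summing over $\underline{\beta}$ the quasiperiodic expansion of $X_{i+1}-X_i$, using the factorials $\beta_0!\cdots\beta_{l+1}!$ divided by the expansion weights and absorbing any residual growth in $\underline{\beta}$ into the geometric factors $(\tfrac{l+2}{M^0}e^{\rho_1'\max_j|\xi_j|})^{\sum_{j\leq l}\beta_j}$ and $(\tfrac{l+2}{\bar{X}^1}\rho_1)^{\beta_{l+1}}$ (strictly less than $1$ by (\ref{choice_M0_rho_1_prime}) and by shrinking $\rho_1$), together with the definition (\ref{defin_rho_beta_gp}) of $\rho_{\underline{\beta}}$, reduces the estimation to a Dirichlet-like series
$$\sum_{k\geq 0}a^{k}\exp\!\left(-\frac{M'}{(k+1)^{hr_1/r_2}\,|\epsilon|^{r_3/r_2}}\right),\qquad 0<a<1,$$
of the kind cited in the introduction. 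A classical optimization, balancing the logarithm of the geometric term $k\log(1/a)$ against $M'/((k+1)^{hr_1/r_2}|\epsilon|^{r_3/r_2})$ at the critical index $k_{\mathrm{opt}}\sim |\epsilon|^{-r_3/(hr_1+r_2)}$, produces the announced bound $K_{23}\exp(-M_{23}/|\epsilon|^{r_3/(hr_1+r_2)})$. The main obstacle is precisely this last step: the contour deformation must be carried out uniformly in $\underline{\beta}$ so that the factorials and $K^{|\underline{\beta}|}$ are absorbed by the decay of the Fourier--Taylor coefficients, and the Dirichlet-series estimate has to be executed carefully in order to show that the small-divisor-induced shrinking radius $\rho_{\underline{\beta}}\sim(|\underline{\beta}|+1)^{-hr_1/r_2}$ exactly degrades the naive Gevrey order $r_3/r_2$ to $r_3/(hr_1+r_2)$, and no further.
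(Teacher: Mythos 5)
Your proposal follows essentially the same route as the paper: define $X_i$ by the substitution $T=\epsilon^{r_3/r_2}t$ in the solutions of Proposition 11, deform the Laplace contour mode by mode using the analytic continuation property on the common discs $D(0,\rho_{\underline{\beta}})$ to get a decay of type $\exp(-c\rho_{\underline{\beta}}/|\epsilon|^{r_3/r_2})$ per coefficient, then sum the resulting Dirichlet-like series $\sum_{k}a^{k}\exp(-M'/((k+1)^{hr_1/r_2}|\epsilon|^{r_3/r_2}))$. The only cosmetic difference is that the paper invokes a ready-made lemma from an earlier work for this last summation while you sketch the balancing argument directly; the outcome and the mechanism (the small-divisor radius $\rho_{\underline{\beta}}$ degrading the order from $r_3/r_2$ to $r_3/(hr_1+r_2)$) are identical.
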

\begin{proof} For all $0 \leq i \leq \nu-1$, we consider the singular Cauchy problem (\ref{SCP}) with initial data
\begin{equation}
 (\partial_{x}^{j}Y_{U_{d_i},D(0,\epsilon_{0}) \setminus \{ 0 \}})(T,z,0,\epsilon) =
Y_{U_{d_i},D(0,\epsilon_{0}) \setminus \{ 0 \},j}(T,z,\epsilon) \ \ , \ \ 0 \leq j \leq S-1. \label{SCP_init_cond_with_i}
\end{equation}
Bearing in mind the hypothesis (\ref{shape_main_CP}) and the assumption (\ref{norm_varphi_j_d_i<I}), we see that the assumptions of the
proposition 11 are all fulfilled for the problem (\ref{SCP}), (\ref{SCP_init_cond_with_i}), which therefore, possesses a solution
$(T,z,x) \mapsto Y_{U_{d_i},D(0,\epsilon_{0}) \setminus \{ 0 \}}(T,z,x,\epsilon)$, holomorphic and bounded on 
$U_{d_{i},\theta,h'|\epsilon|^{r}} \times H_{\rho_{1}'} \times D(0,\rho_{1})$ for some $h'>0$ and $\rho_{1}>0$, for all
$\epsilon \in D(0,\epsilon_{0}) \setminus \{ 0 \}$. Now, we put
$$ X_{i}(t,z,x,\epsilon) = Y_{U_{d_i},D(0,\epsilon_{0}) \setminus \{ 0 \}}(\epsilon^{r}t,z,x,\epsilon) $$
which defines a holomorphic and bounded function on $(\mathcal{T} \cap D(0,h') ) \times H_{\rho_{1}'} \times
D(0,\rho_{1}) \times \mathcal{E}_{i}$, for all $0 \leq i \leq \nu-1$, by construction of $\mathcal{T}$ and $\mathcal{E}_{i}$ in Definition 5.
Since $Y_{U_{d_i},D(0,\epsilon_{0}) \setminus \{ 0 \}}(T,z,x,\epsilon)$ solves the problem (\ref{SCP}), (\ref{SCP_init_cond_with_i}), one can check
that $X_{i}(t,z,x,\epsilon)$ solves the problem (\ref{main_CP}), (\ref{main_CP_init_cond}) on
$(\mathcal{T} \cap D(0,h') ) \times H_{\rho_{1}'} \times D(0,\rho_{1}) \times \mathcal{E}_{i}$, for all $0 \leq i \leq \nu-1$.\medskip

In the next step of the proof, we show the estimates (\ref{|X_i+1_minus_X_i|<}). Let $0 \leq i \leq \nu-1$. Using Proposition 11, we can write
the function $X_{i}(t,z,x,\epsilon)$ as follows:
$$
X_{i}(t,z,x,\epsilon) = \sum_{\beta_{0},\ldots,\beta_{l},\beta_{l+1} \geq 0}
X_{i,(\beta_{0},\ldots,\beta_{l+1})}(t,\epsilon) \frac{ \exp( iz( \sum_{j=0}^{l} \beta_{j} \xi_{j}) ) }{\beta_{0}! \cdots \beta_{l}!}
\frac{x^{\beta_{l+1}}}{\beta_{l+1}!}
$$
where
$$ X_{i,(\beta_{0},\ldots,\beta_{l+1})}(t,\epsilon) = \frac{1}{\epsilon^{r}t} \int_{L_{\gamma_{i}}}
V_{U_{d_{i},(\beta_{0},\ldots,\beta_{l+1})}}(\tau,\epsilon) e^{-\frac{\tau}{\epsilon^{r}t}} d\tau $$
with integration path $L_{\gamma_{i}}=\mathbb{R}_{+}e^{\sqrt{-1}\gamma_{i}} \subset U_{d_i}$ and such that
$\tau \mapsto V_{U_{d_{i},(\beta_{0},\ldots,\beta_{l+1})}}(\tau,\epsilon)$ are holomorphic functions on
$D(0,\rho_{(\beta_{0},\ldots,\beta_{l+1})}) \cup U_{d_i}$, for all $\epsilon \in D(0,\epsilon_{0}) \setminus \{ 0 \}$ and satisfy the estimates:
there exist constants $C_{20},K_{20}>0$, which satisfies $\rho_{1}K_{20}<1$, with
\begin{equation}
|V_{U_{d_i},(\beta_{0},\ldots,\beta_{l},\beta_{l+1})}(\tau,\epsilon)| \leq C_{20} \exp( \frac{\sigma}{|\epsilon|^r} \zeta(b) |\tau| )
(\frac{l+2}{M^{0}})^{\sum_{j=0}^{l} \beta_{j}} K_{20}^{\beta_{l+1}} \beta_{0}! \cdots \beta_{l+1}! \label{|V_U_di|<}
\end{equation}
for all $\tau \in D(0,\rho_{(\beta_{0},\ldots,\beta_{l},\beta_{l+1})}) \cup U_{d_i}$ and $\epsilon \in D(0,\epsilon_{0}) \setminus \{ 0 \}$.
Moreover, from the assumption (\ref{V_Udi_j_analyt_cont_V_j}), we deduce with the help of the recusion (\ref{recursion_V_beta}),
which is satisfied for the coefficients of the formal solution $\hat{V}(\tau,z,x,\epsilon)$ of the problem
(\ref{Borel_SCP_modif}), (\ref{Borel_SCP_init_cond_modif}), that for all $(\beta_{0},\ldots,\beta_{l},\beta_{l+1}) \in \mathbb{N}^{l+2}$,
there exists a holomorphic function
$\tau \mapsto V_{(\beta_{0},\ldots,\beta_{l},\beta_{l+1})}(\tau,\epsilon)$ on $D(0,\rho_{(\beta_{0},\ldots,\beta_{l},\beta_{l+1})})$
for all $\epsilon \in D(0,\epsilon_{0}) \setminus \{ 0 \}$ such that
\begin{equation}
V_{U_{d_i},(\beta_{0},\ldots,\beta_{l},\beta_{l+1})}(\tau,\epsilon) = V_{(\beta_{0},\ldots,\beta_{l},\beta_{l+1})}(\tau,\epsilon)
\label{V_Udi_beta_analyt_cont_V_beta}
\end{equation}
for all $\tau \in D(0,\rho_{(\beta_{0},\ldots,\beta_{l},\beta_{l+1})})$, all
$\epsilon \in D(0,\epsilon_{0}) \setminus \{ 0 \}$, all $0 \leq i \leq \nu-1$.\medskip

We show the following

\begin{lemma} There exist constants $0 < h'' < h'$, $C_{21},K_{21},M_{21}>0$ (independent of $\epsilon$),
which satisfies $\rho_{1}K_{21}<1$, such that
\begin{multline}
\sup_{t \in \mathcal{T} \cap D(0,h'')} |X_{i+1,(\beta_{0},\ldots,\beta_{l+1})}(t,\epsilon) - X_{i,(\beta_{0},\ldots,\beta_{l+1})}(t,\epsilon)|\\
\leq C_{21}(\frac{l+2}{M^{0}})^{\sum_{j=0}^{l} \beta_{j}}
K_{21}^{\beta_{l+1}} \beta_{0}! \cdots \beta_{l+1}!
e^{-M_{21}\frac{\rho_{(\beta_{0},\ldots,\beta_{l+1})}}{|\epsilon|^{r}}} \label{|X_i+1_beta_minus_X_i_beta|<}
\end{multline}
for all $\epsilon \in \mathcal{E}_{i+1} \cap \mathcal{E}_{i}$, all $0 \leq i \leq \nu-1$, all
$(\beta_{0},\ldots,\beta_{l+1}) \in \mathbb{N}^{l+2}$ (where by convention $X_{\nu,(\beta_{0},\ldots,\beta_{l+1})} =
X_{0,(\beta_{0},\ldots,\beta_{l+1})}$).
\end{lemma}
\begin{proof} From the fact that the function $\tau \mapsto V_{(\beta_{0},\ldots,\beta_{l},\beta_{l+1})}(\tau,\epsilon)
e^{-\frac{\tau}{\epsilon^{r}t}}$ is
holomorphic on \\ $D(0,\rho_{(\beta_{0},\ldots,\beta_{l},\beta_{l+1})})$, for all $\epsilon \in D(0,\epsilon_{0})$, we deduce that
its integral along the union of a segment starting from 0 to $(\rho_{(\beta_{0},\ldots,\beta_{l+1})}/2)e^{\sqrt{-1}\gamma_{i+1}}$,
an arc of circle with radius $\rho_{(\beta_{0},\ldots,\beta_{l+1})}/2$ connecting
$(\rho_{(\beta_{0},\ldots,\beta_{l+1})}/2)e^{\sqrt{-1}\gamma_{i+1}}$ and
$(\rho_{(\beta_{0},\ldots,\beta_{l+1})}/2)e^{\sqrt{-1}\gamma_{i}}$ and a segment starting from\\
$(\rho_{(\beta_{0},\ldots,\beta_{l+1})}/2)e^{\sqrt{-1}\gamma_{i}}$ to the origin, is vanishing. Therefore, using the property
(\ref{V_Udi_beta_analyt_cont_V_beta}), we can write, for all
$(\beta_{0},\ldots,\beta_{l},\beta_{l+1}) \in \mathbb{N}^{l+2}$,
\begin{multline}
X_{i+1,(\beta_{0},\ldots,\beta_{l+1})}(t,\epsilon) - X_{i,(\beta_{0},\ldots,\beta_{l+1})}(t,\epsilon) \\
= \frac{1}{\epsilon^{r}t} \left(
\int_{L_{\rho_{(\beta_{0},\ldots,\beta_{l+1})}/2},\gamma_{i+1}} V_{U_{d_{i+1}},(\beta_{0},\ldots,\beta_{l+1})}(\tau,\epsilon)
e^{-\frac{\tau}{\epsilon^{r}t}} d\tau \right. \\
\left. - \int_{L_{\rho_{(\beta_{0},\ldots,\beta_{l+1})}/2},\gamma_{i}}
V_{U_{d_{i}},(\beta_{0},\ldots,\beta_{l+1})}(\tau,\epsilon) e^{-\frac{\tau}{\epsilon^{r}t}} d\tau \right. \\
\left. + \int_{C(\rho_{(\beta_{0},\ldots,\beta_{l+1})}/2,\gamma_{i},\gamma_{i+1})} V_{(\beta_{0},\ldots,\beta_{l+1})}(\tau,\epsilon)
e^{-\frac{\tau}{\epsilon^{r}t}} d\tau \right) \label{decomp_X_i+1_beta_minus_X_i_beta}
\end{multline}
where 
\begin{multline*}
L_{\rho_{(\beta_{0},\ldots,\beta_{l+1})}/2},\gamma_{i+1} = [\rho_{(\beta_{0},\ldots,\beta_{l+1})}/2,+\infty)
e^{\sqrt{-1}\gamma_{i+1}},\\
L_{\rho_{(\beta_{0},\ldots,\beta_{l+1})}/2},\gamma_{i} = [\rho_{(\beta_{0},\ldots,\beta_{l+1})}/2,+\infty)
e^{\sqrt{-1}\gamma_{i}}
\end{multline*}
and $C(\rho_{(\beta_{0},\ldots,\beta_{l+1})}/2,\gamma_{i},\gamma_{i+1})$ is an arc of circle centered at 0 with
radius $\rho_{(\beta_{0},\ldots,\beta_{l+1})}/2$ connecting
$(\rho_{(\beta_{0},\ldots,\beta_{l+1})}/2)e^{\sqrt{-1}\gamma_{i}}$ and
$(\rho_{(\beta_{0},\ldots,\beta_{l+1})}/2)e^{\sqrt{-1}\gamma_{i+1}}$ for a well chosen orientation.\medskip

First, we give estimates for
$$ I_{1} = | \frac{1}{\epsilon^{r}t}\int_{L_{\rho_{(\beta_{0},\ldots,\beta_{l+1})}/2},\gamma_{i+1}}
V_{U_{d_{i+1}},(\beta_{0},\ldots,\beta_{l+1})}(\tau,\epsilon)
e^{-\frac{\tau}{\epsilon^{r}t}} d\tau |. $$
By construction, the direction $\gamma_{i+1}$ (which depends on $\epsilon^{r}t$) is chosen in such a way that
$\cos(\gamma_{i+1} - \mathrm{arg}(\epsilon^{r}t)) \geq \delta_{1}$ for all $\epsilon \in \mathcal{E}_{i+1} \cap \mathcal{E}_{i}$,
all $t \in \mathcal{T} \cap D(0,h')$, for some fixed $\delta_{1}>0$. From the estimates (\ref{|V_U_di|<}), we deduce that
\begin{multline}
I_{1} \leq \frac{1}{|\epsilon^{r}t|} \int_{\rho_{(\beta_{0},\ldots,\beta_{l+1})}/2}^{+\infty}
C_{20}(\frac{l+2}{M^{0}})^{\sum_{j=0}^{l} \beta_{j}} K_{20}^{\beta_{l+1}} \beta_{0}! \cdots \beta_{l+1}!
e^{\frac{\sigma \zeta(b)}{|\epsilon|^{r}}h} e^{-\frac{h}{|\epsilon^{r}t|}\cos( \gamma_{i+1} - \mathrm{arg}(\epsilon^{r}t))} dh\\
\leq \frac{1}{|\epsilon^{r}t|}C_{20}(\frac{l+2}{M^{0}})^{\sum_{j=0}^{l} \beta_{j}}
K_{20}^{\beta_{l+1}} \beta_{0}! \cdots \beta_{l+1}! \int_{\rho_{(\beta_{0},\ldots,\beta_{l+1})}/2}^{+\infty}
e^{(\sigma \zeta(b) - \frac{\delta_{1}}{|t|})\frac{h}{|\epsilon|^{r}}} dh\\
= C_{20}(\frac{l+2}{M^{0}})^{\sum_{j=0}^{l} \beta_{j}}
K_{20}^{\beta_{l+1}} \beta_{0}! \cdots \beta_{l+1}!\frac{1}{\delta_{1} - \sigma \zeta(b)|t|}
e^{-(\frac{\delta_1}{|t|} - \sigma \zeta(b)) \frac{\rho_{(\beta_{0},\ldots,\beta_{l+1})}}{2|\epsilon|^{r}}} \\
\leq \frac{C_{20}}{\delta_{2}} (\frac{l+2}{M^{0}})^{\sum_{j=0}^{l} \beta_{j}}
K_{20}^{\beta_{l+1}} \beta_{0}! \cdots \beta_{l+1}!
e^{-\delta_{2}\frac{\rho_{(\beta_{0},\ldots,\beta_{l+1})}}{2|\epsilon|^{r}h'}} \label{I_1<}
\end{multline}
for all $t \in \mathcal{T} \cap D(0,h')$ with $|t|<(\delta_{1} - \delta_{2})/\sigma \zeta(b)$, for some $0 < \delta_{2} < \delta_{1}$ and
for all $\epsilon \in \mathcal{E}_{i+1} \cap \mathcal{E}_{i}$.\medskip

Now, we provide estimates for
$$ I_{2} = | \frac{1}{\epsilon^{r}t}\int_{L_{\rho_{(\beta_{0},\ldots,\beta_{l+1})}/2},\gamma_{i}}
V_{U_{d_{i}},(\beta_{0},\ldots,\beta_{l+1})}(\tau,\epsilon)
e^{-\frac{\tau}{\epsilon^{r}t}} d\tau |. $$
By construction, the direction $\gamma_{i}$ (wich depends on $\epsilon^{r}t$) is chosen in such a way that
$\cos(\gamma_{i} - \mathrm{arg}(\epsilon^{r}t)) \geq \delta_{1}$ for all $\epsilon \in \mathcal{E}_{i+1} \cap \mathcal{E}_{i}$,
all $t \in \mathcal{T} \cap D(0,h')$, for some fixed $\delta_{1}>0$. Again, from the estimates (\ref{|V_U_di|<}), we deduce as above that
\begin{equation}
I_{2} \leq \frac{C_{20}}{\delta_{2}} (\frac{l+2}{M^{0}})^{\sum_{j=0}^{l} \beta_{j}}
K_{20}^{\beta_{l+1}} \beta_{0}! \cdots \beta_{l+1}!
e^{-\delta_{2}\frac{\rho_{(\beta_{0},\ldots,\beta_{l+1})}}{2|\epsilon|^{r}h'}} \label{I_2<}
\end{equation}
for all $t \in \mathcal{T} \cap D(0,h')$ with $|t|<(\delta_{1} - \delta_{2})/\sigma \zeta(b)$, for some $0 < \delta_{2} < \delta_{1}$ and
for all $\epsilon \in \mathcal{E}_{i+1} \cap \mathcal{E}_{i}$.\medskip

Finally, we give upper bounds for
$$ I_{3} = | \int_{C(\rho_{(\beta_{0},\ldots,\beta_{l+1})}/2,\gamma_{i},\gamma_{i+1})} V_{(\beta_{0},\ldots,\beta_{l+1})}(\tau,\epsilon)
e^{-\frac{\tau}{\epsilon^{r}t}} d\tau|.$$
Due to (\ref{|V_U_di|<}) and bearing in mind the property (\ref{V_Udi_beta_analyt_cont_V_beta}), we deduce that
\begin{multline}
I_{3} \leq \frac{1}{|\epsilon^{r}t|}| \int_{\gamma_{i}}^{\gamma_{i+1}}
C_{20}(\frac{l+2}{M^{0}})^{\sum_{j=0}^{l} \beta_{j}} K_{20}^{\beta_{l+1}} \beta_{0}! \cdots \beta_{l+1}!\\
\times e^{\frac{\sigma \zeta(b)}{|\epsilon|^{r}} \frac{\rho_{(\beta_{0},\ldots,\beta_{l+1})}}{2}}
e^{-\frac{\rho_{(\beta_{0},\ldots,\beta_{l+1})}}{2} \frac{\cos( \theta - \mathrm{arg}(\epsilon^{r}t) ) }{|\epsilon^{r}t|} }
\frac{\rho_{(\beta_{0},\ldots,\beta_{l+1})}}{2} d\theta | \label{I_3<_1}
\end{multline}
By construction, the circle $C(\rho_{(\beta_{0},\ldots,\beta_{l+1})}/2,\gamma_{i},\gamma_{i+1})$ is chosen in order that
$\cos( \theta - \mathrm{arg}(\epsilon^{r}t) ) \geq \delta_{1}$ for all $\theta \in [\gamma_{i},\gamma_{i+1}]$ (if
$\gamma_{i} < \gamma_{i+1}$), for all $\theta \in [\gamma_{i+1},\gamma_{i}]$ (if $\gamma_{i+1} < \gamma_{i}$), for every
$t \in \mathcal{T} \cap D(0,h')$, $\epsilon \in \mathcal{E}_{i+1} \cap \mathcal{E}_{i}$. From (\ref{I_3<_1}), we get that
\begin{multline}
I_{3} \leq |\gamma_{i+1} - \gamma_{i}|
C_{20}(\frac{l+2}{M^{0}})^{\sum_{j=0}^{l} \beta_{j}} K_{20}^{\beta_{l+1}} \beta_{0}! \cdots \beta_{l+1}!
\frac{\rho_{(\beta_{0},\ldots,\beta_{l+1})}}{2}\frac{1}{|\epsilon^{r}t|}e^{-(\frac{\delta_{1}}{|t|} - \sigma \zeta(b))
\frac{\rho_{(\beta_{0},\ldots,\beta_{l+1})}}{2|\epsilon|^{r}}}\\
\leq |\gamma_{i+1} - \gamma_{i}|
C_{20}(\frac{l+2}{M^{0}})^{\sum_{j=0}^{l} \beta_{j}} K_{20}^{\beta_{l+1}} \beta_{0}! \cdots \beta_{l+1}!
\frac{\rho_{(\beta_{0},\ldots,\beta_{l+1})}}{2}\frac{1}{|\epsilon^{r}t|}
e^{-\frac{\delta_2}{4} \frac{\rho_{(\beta_{0},\ldots,\beta_{l+1})}}{|\epsilon|^{r}|t|}}
e^{-\frac{\delta_2}{4} \frac{\rho_{(\beta_{0},\ldots,\beta_{l+1})}}{|\epsilon|^{r}h'}} \label{I_3<_2}
\end{multline}
for all $t \in \mathcal{T} \cap D(0,h')$ with $|t|<(\delta_{1} - \delta_{2})/\sigma \zeta(b)$, for some $0 < \delta_{2} < \delta_{1}$ and
for all $\epsilon \in \mathcal{E}_{i+1} \cap \mathcal{E}_{i}$. Regarding the inequality (\ref{xexpx<}), we deduce from (\ref{I_3<_2}) that
\begin{equation}
I_{3} \leq \frac{2e^{-1}|\gamma_{i+1} - \gamma_{i}|}{\delta_{2}}
C_{20}(\frac{l+2}{M^{0}})^{\sum_{j=0}^{l} \beta_{j}} K_{20}^{\beta_{l+1}} \beta_{0}! \cdots \beta_{l+1}!
e^{-\frac{\delta_2}{4} \frac{\rho_{(\beta_{0},\ldots,\beta_{l+1})}}{|\epsilon|^{r}h'}} \label{I_3<_3}
\end{equation}
for all $t \in \mathcal{T} \cap D(0,h')$ with $|t|<(\delta_{1} - \delta_{2})/\sigma \zeta(b)$, for some $0 < \delta_{2} < \delta_{1}$ and
for all $\epsilon \in \mathcal{E}_{i+1} \cap \mathcal{E}_{i}$.\medskip

Finally, gathering the decomposition (\ref{decomp_X_i+1_beta_minus_X_i_beta}) and the estimates
(\ref{I_1<}), (\ref{I_2<}), (\ref{I_3<_3}), we get the inequality (\ref{|X_i+1_beta_minus_X_i_beta|<}).
\end{proof}
>From Lemma 9 and taking into account the assumption (\ref{choice_M0_rho_1_prime}), we can write
\begin{multline}
\sup_{t \in \mathcal{T} \cap D(0,h''),z \in H_{\rho_{1}'},x \in D(0,\rho_{1})}
|X_{i+1}(t,z,x,\epsilon) - X_{i}(t,z,x,\epsilon)|\\
\leq \sum_{\beta_{0},\ldots,\beta_{l},\beta_{l+1} \geq 0} C_{21} e^{-M_{21}\frac{\rho_{(\beta_{0},\ldots,\beta_{l+1})}}{|\epsilon|^r}}
( \frac{(l+2)e^{\rho_{1}'(\max_{j=0}^{l}|\xi_{j}|)}}{M^{0}} )^{\sum_{j=0}^{l} \beta_{j}} (\rho_{1}K_{21})^{\beta_{l+1}}\\
\leq C_{21} \sum_{\beta_{0},\ldots,\beta_{l},\beta_{l+1} \geq 0} e^{-M_{21}\frac{\rho_{(\beta_{0},\ldots,\beta_{l+1})}}{|\epsilon|^r}}
( \max \{ \rho_{1}K_{21},\frac{1}{2} \} )^{\sum_{j=0}^{l+1} \beta_{j}} \label{|X_i+1_minus_X_i|<_1}
\end{multline}
for all $\epsilon \in \mathcal{E}_{i+1} \cap \mathcal{E}_{i}$, all $0 \leq i \leq \nu-1$. Moreover, we recall that
for all integers $l \geq 1$, $\kappa \geq 0$,
\begin{equation}
\mathrm{Card} \{ (\beta_{0},\ldots,\beta_{l+1}) \in \mathbb{N}^{l+2}/ \sum_{j=0}^{l+1} \beta_{j} = \kappa \} =
\frac{(\kappa + l +1)!}{(l+1)!\kappa!} = \frac{\mathcal{P}_{l}(\kappa)}{(l+1)!} \label{card_sum_l_kappa}
\end{equation}
where $\mathcal{P}_{l}(\kappa)=(\kappa+l+1)(\kappa+l)\cdots(\kappa+1)$ is a polynomial of degree $l+1$ in $\kappa$. From
the definition (\ref{defin_rho_beta_gp}) and with the help of (\ref{card_sum_l_kappa}), we get two constants $C_{22}>0$ and
$0<K_{22}<1$ such that
\begin{multline}
C_{21} \sum_{\beta_{0},\ldots,\beta_{l},\beta_{l+1} \geq 0} e^{-M_{21}\frac{\rho_{(\beta_{0},\ldots,\beta_{l+1})}}{|\epsilon|^r}}
( \max \{ \rho_{1}K_{21},\frac{1}{2} \} )^{\sum_{j=0}^{l+1} \beta_{j}}\\
\leq C_{21} \sum_{\kappa \geq 0} \frac{\mathcal{P}_{l}(\kappa)}{(l+1)!}
\exp( - M_{21} \frac{ C_{\xi_{1},\ldots,\xi_{l}}^{r_{1}/r_{2}} }{2(1+\kappa)^{hr_{1}/r_{2}}|\epsilon|^{r}} )(
\max\{ \rho_{1}K_{21},\frac{1}{2} \})^{\kappa}\\
\leq C_{22} \sum_{\kappa \geq 0} \exp( - M_{21} \frac{ C_{\xi_{1},\ldots,\xi_{l}}^{r_{1}/r_{2}} }{2(1+\kappa)^{hr_{1}/r_{2}}|\epsilon|^{r}} )
(K_{22})^{\kappa} \label{majorize_Dirichlet_series}
\end{multline}
for all $\epsilon \in \mathcal{E}_{i+1} \cap \mathcal{E}_{i}$, all $0 \leq i \leq \nu-1$. Now, we recall the following lemma from \cite{lamasa}.
\begin{lemma} Let $0 < a < 1$ and $\alpha > 0$. There exist $K,M>0$ and $\delta>0$ such that
$$ \sum_{\kappa \geq 0} e^{-\frac{1}{(\kappa +1)^{\alpha}}\frac{1}{\epsilon}} a^{\kappa} \leq K
\exp \left(  -M \epsilon^{-\frac{1}{\alpha + 1}} \right) $$
for all $\epsilon \in (0,\delta ]$.
\end{lemma}
>From Lemma 10 applied to the inequality (\ref{majorize_Dirichlet_series}) and from (\ref{|X_i+1_minus_X_i|<_1}), we deduce that
the estimates (\ref{|X_i+1_minus_X_i|<}) hold, if $\epsilon_{0}$ is chosen small enough.
\end{proof}

\subsection{Existence of formal power series solutions in the complex parameter for the main Cauchy problem}
 
This subsection is devoted to explain the main result of this work. Namely, we will establish the existence of a formal power
series
$$ \hat{X}(t,z,x,\epsilon) = \sum_{k \geq 0} H_{k}(t,z,x) \frac{\epsilon^{k}}{k!} \in \mathcal{O}((\mathcal{T} \cap D(0,h'')) \times
H_{\rho_{1}'} \times D(0,\rho_{1}))[[\epsilon]] $$
which solves formally the equation (\ref{main_CP}) and is constructed in such a way that the actual solutions
$X_{i}(t,z,x,\epsilon)$ of the problem (\ref{main_CP}), (\ref{main_CP_init_cond}) all have $\hat{X}$ as asymptotic expansion of
Gevrey order $\frac{hr_{1}+r_{2}}{r_3}$ on $\mathcal{E}_{i}$ (as formal series and functions with coefficients and values in
the Banach space $\mathcal{O}((\mathcal{T} \cap D(0,h'')) \times
H_{\rho_{1}'} \times D(0,\rho_{1}))$ equipped with the supremum norm) (see Definition 6 below), for all $0 \leq i \leq \nu-1$.

The proof makes use a Banach valued version of cohomological criterion for Gevrey asymptotic expansion of sectorial holomorphic functions
known in the litterature as the Ramis-Sibuya theorem. For a reference, we refer to \cite{ba}, Section 7.4 Proposition 18 and
\cite{hssi}, Lemma XI-2-6.\medskip

\begin{defin} Let $(\mathbb{E},\left\|\cdot\right\|_{\mathbb{E}})$ be a complex Banach space over $\mathbb{C}$. One considers a formal series
$\hat{G}(\epsilon)=\sum_{n\ge0}G_{n}\epsilon^{n}$ where the coefficients $G_{n}$ belong to $\mathbb{E}$ and a holomorphic function
$G : \mathcal{E} \rightarrow \mathbb{E}$ on an open bounded sector $\mathcal{E}$ with vertex at 0. Let $s>0$ be a positive real number.
One says that $G$ admits $\hat{G}$ as its asymptotic expansion of Gevrey order $s$ on $\mathcal{E}$ if 
for every proper and bounded subsector $T$ of $\mathcal{E}$, there exist two constants $K,M>0$ such that for all $N \geq 1$, one has
$$||G(\epsilon)-\sum_{n=0}^{N-1}G_{n}\epsilon^n||_{\mathbb{E}}\le KM^{N}N!^{s}|\epsilon|^{N}$$
for all $\epsilon \in T$.
\end{defin}

\noindent \textbf{Theorem (RS)}
\emph{Let $(\mathbb{E},\left\|\cdot\right\|_{\mathbb{E}})$ be a complex Banach space over $\mathbb{C}$. Let $\{\mathcal{E}_{i}\}_{0\le i\le \nu-1}$
be a good covering in $\mathbb{C}^{\star}$. For every $0\le i\le \nu-1$, let $G_{i}$
be a holomorphic function from $\mathcal{E}_{i}$ into $\mathbb{E}$, and let the
cocycle $\Delta_{i}(\epsilon):=G_{i+1}(\epsilon)-G_i(\epsilon)$ be a holomorphic function from
$Z_{i}:=\mathcal{E}_{i}\cap\mathcal{E}_{i+1}$ into $\mathbb{E}$ (with the convention that $\mathcal{E}_{\nu}=\mathcal{E}_{0}$ and
$G_{\nu}=G_0$). We assume that:
\begin{enumerate}
\item $G_{i}(\epsilon)$ is bounded as $\epsilon\in\mathcal{E}_{i}$ tends to 0, for every $0\le i\le\nu-1$,
\item $\Delta_{i}$ has an exponential decreasing of order $s>0$ on $Z_i$, for every $0\le i\le \nu-1$, meaning there exist
$C_i,A_i>0$ such that
\begin{equation}
\left\|\Delta_{i}(\epsilon)\right\|_{\mathbb{E}}\le C_{i}e^{-\frac{A_i}{|\epsilon|^{1/s}}}, \label{s_exp_flat}
\end{equation}
for every $\epsilon\in Z_i$ and $0\le i\le \nu-1$.
\end{enumerate}
Then, there exists a formal power series $\hat{G}(\epsilon)\in\mathbb{E}[[\epsilon]]$ such that
$G_i(\epsilon)$ admits $\hat{G}(\epsilon)$ as its asymptotic expansion of Gevrey order $s$ on $\mathcal{E}_{i}$, for every $0\le i\le \nu-1$.}\medskip

\noindent We now state the main result of our paper.\medskip

\begin{theo} Let us assume that the conditions (\ref{shape_main_CP}) hold. For all $0 \leq i \leq \nu-1$, $0 \leq j \leq S-1$, we also
assume that for the functions $\Xi_{i,j}(t,z,\epsilon)$ constructed in (\ref{main_CP_init_cond}) the inequalities (\ref{norm_varphi_j_d_i<I}) are fulfilled
for some constant $I>0$ given in Proposition 12. Let
$$ \mathbb{E} = \mathcal{O}((\mathcal{T} \cap D(0,h'')) \times
H_{\rho_{1}'} \times D(0,\rho_{1})) $$
be the Banach space of holomorphic and bounded functions on $(\mathcal{T} \cap D(0,h'')) \times
H_{\rho_{1}'} \times D(0,\rho_{1})$ equipped with the supremum norm, where $h''$,$\rho_{1}'$,$\rho_{1}$ are the constants
appearing in Proposition 12.

Then, there exists a formal series
$$ \hat{X}(t,z,x,\epsilon) = \sum_{k \geq 0} H_{k}(t,z,x) \frac{\epsilon^{k}}{k!} \in \mathbb{E}[[\epsilon]] $$
which formally solves the equation
\begin{multline}
( \epsilon^{r_{3}}(t^{2}\partial_{t} + t)^{r_2} + (-i\partial_{z}+1)^{r_1}) \partial_{x}^{S}\hat{X}(t,z,x,\epsilon) \\
= \sum_{(s,k_{0},k_{1},k_{2}) \in \mathcal{S}} b_{s,k_{0},k_{1},k_{2}}(z,x,\epsilon)
t^{s}(\partial_{t}^{k_0}\partial_{z}^{k_1}\partial_{x}^{k_2}\hat{X}(t,z,x,\epsilon)\\
+ \sum_{(l_{0},l_{1}) \in \mathcal{N}} c_{l_{0},l_{1}}(z,x,\epsilon)
t^{l_{0}+l_{1}-1}(\hat{X}(t,z,x,\epsilon))^{l_{1}} \label{main_CP_formal}
\end{multline}
and is the Gevrey asymptotic expansion of order $\frac{hr_{1}+r_{2}}{r_{3}}$ of the $\mathbb{E}-$valued function
$\epsilon \in \mathcal{E}_{i} \mapsto X_{i}(t,z,x,\epsilon)$, solution of the problem (\ref{main_CP}), (\ref{main_CP_init_cond})
constructed in Proposition 12, for all $0 \leq i \leq \nu-1$.
\end{theo}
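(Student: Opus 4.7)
The approach is a direct application of Theorem (RS) to the family of $\mathbb{E}$-valued functions $G_i(\epsilon) := X_i(\cdot,\cdot,\cdot,\epsilon)$, $0 \leq i \leq \nu-1$, produced by Proposition 12, with Gevrey order $s = (hr_1+r_2)/r_3$. First I would observe that each $G_i : \mathcal{E}_i \to \mathbb{E}$ is holomorphic and bounded, since by Proposition 12 the function $X_i(t,z,x,\epsilon)$ is holomorphic and bounded on $(\mathcal{T}\cap D(0,h''))\times H_{\rho_1'}\times D(0,\rho_1)\times \mathcal{E}_i$. This furnishes hypothesis (1) of Theorem (RS). Second, the cocycle $\Delta_i(\epsilon) := G_{i+1}(\epsilon) - G_i(\epsilon)$ is holomorphic on $\mathcal{E}_{i+1}\cap\mathcal{E}_i$ with values in $\mathbb{E}$, and the estimate (\ref{|X_i+1_minus_X_i|<}) proved in Proposition 12 reads in $\mathbb{E}$-norm as $\|\Delta_i(\epsilon)\|_{\mathbb{E}} \leq K_{23}\exp(-M_{23}/|\epsilon|^{r_3/(hr_1+r_2)})$, which is exactly hypothesis (2) of Theorem (RS) with the exponent $s = (hr_1+r_2)/r_3$.

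Theorem (RS) then yields a formal series $\hat{X}(\epsilon) = \sum_{k\geq 0} H_k(t,z,x)\epsilon^k/k! \in \mathbb{E}[[\epsilon]]$ such that each $X_i$ admits $\hat{X}$ as Gevrey asymptotic expansion of order $(hr_1+r_2)/r_3$ on $\mathcal{E}_i$, which is precisely the quantitative bound displayed at the end of the statement. It remains to verify that $\hat{X}$ formally satisfies (\ref{main_CP_formal}). For this, write the main equation symbolically as $\mathcal{L}(X_i,\epsilon) = 0$, where $\mathcal{L}$ is built from partial derivatives $\partial_t,\partial_z,\partial_x$, multiplication by $\epsilon^{r_3}$, multiplication by the holomorphic coefficients $b_{s,k_0,k_1,k_2}(z,x,\epsilon)$ and $c_{l_0,l_1}(z,x,\epsilon)$ (which are analytic in $\epsilon$ at the origin, hence coincide with their own convergent Taylor expansion viewed as a Gevrey asymptotic series of any order), and polynomial products. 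Each of these operations preserves Gevrey asymptotic expansions of order $s$: differentiation in $(t,z,x)$ and multiplication by bounded holomorphic coefficients act continuously within $\mathbb{E}$, while multiplication by $\epsilon^{r_3}$ merely shifts coefficient indices. Consequently, if $X_i$ admits $\hat{X}$ as its asymptotic expansion on $\mathcal{E}_i$, so does $\mathcal{L}(X_i,\epsilon)$ admit $\mathcal{L}(\hat{X},\epsilon)$ as its asymptotic expansion in $\mathbb{E}[[\epsilon]]$.

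Since $\mathcal{L}(X_i,\epsilon) \equiv 0$ for every $0\leq i\leq \nu-1$ by Proposition 12, the uniqueness of the coefficients of a Gevrey asymptotic expansion forces $\mathcal{L}(\hat{X},\epsilon) = 0$ in $\mathbb{E}[[\epsilon]]$, which is exactly the statement that $\hat{X}$ formally solves (\ref{main_CP_formal}). The main obstacle is not the invocation of Theorem (RS), whose hypotheses are provided essentially for free by Proposition 12, but rather the bookkeeping required to check the stability of Gevrey asymptotics under the full list of operations appearing in (\ref{main_CP_formal}); this step is however routine once one notices that $\mathbb{E}$ is a Banach algebra under pointwise multiplication of bounded holomorphic functions on $(\mathcal{T}\cap D(0,h''))\times H_{\rho_1'}\times D(0,\rho_1)$, and that the differential operators involved preserve the Banach space structure through Cauchy estimates on interior subdomains.
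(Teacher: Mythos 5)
Your proposal is correct and follows the paper's strategy in its first and decisive step: both define $G_i(\epsilon)=X_i(\cdot,\cdot,\cdot,\epsilon)$, feed the boundedness and the exponential estimate (\ref{|X_i+1_minus_X_i|<}) from Proposition 12 into Theorem (RS), and obtain the common Gevrey expansion $\hat{X}$ of order $\frac{hr_1+r_2}{r_3}$. Where you diverge is in the verification that $\hat{X}$ formally solves (\ref{main_CP_formal}): you argue abstractly that the operator $\mathcal{L}$ built from $\partial_t,\partial_z,\partial_x$, the analytic coefficients and the monomial $\epsilon^{r_3}$ maps asymptotic expansions to asymptotic expansions, and then invoke uniqueness of the coefficients, whereas the paper differentiates the equation $l$ times in $\epsilon$ via Leibniz, uses the limit relation $\partial_\epsilon^{l}X_i\to H_l$ implied by the asymptotic expansion, and extracts explicit recursions for the $H_l$ which, combined with the Taylor expansions of $b_{s,k_0,k_1,k_2}$ and $c_{l_0,l_1}$ in $\epsilon$, reconstitute the formal equation. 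The two arguments rest on the same fact and are interchangeable; the paper's version has the merit of producing the recursion formulas for the coefficients explicitly, while yours is shorter but requires you to be careful about one point you only gesture at: $\partial_t,\partial_z,\partial_x$ are not bounded operators on $\mathbb{E}$, so "stability of asymptotics under $\mathcal{L}$" must be read on a relatively compact subdomain (via Cauchy estimates), after which the vanishing of the holomorphic coefficients of $\mathcal{L}(\hat{X},\epsilon)$ on that subdomain propagates to all of $(\mathcal{T}\cap D(0,h''))\times H_{\rho_1'}\times D(0,\rho_1)$ by analytic continuation. The paper's limit-and-recursion route faces the same issue and resolves it the same way implicitly, so this is a presentational gap rather than a mathematical one.
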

\begin{proof} We consider the functions $X_{i}(t,z,x,\epsilon)$, $0 \leq i \leq \nu-1$ constructed in Proposition 12. For all
$0 \leq i \leq \nu-1$, we define $G_{i}(\epsilon) := (t,z,x) \mapsto X_{i}(t,z,x,\epsilon)$, which is, by construction, a bounded holomorphic
function from $\mathcal{E}_{i}$ into the Banach space $\mathbb{E}$ of holomorphic and bounded functions on $(\mathcal{T} \cap D(0,h''))
\times H_{\rho_{1}'} \times D(0,\rho_{1})$ equipped with the supremum norm, where $h''$,$\rho_{1}'$,$\rho_{1}$ are constants
appearing in Proposition 12. Bearing in mind the estimates (\ref{|X_i+1_minus_X_i|<}), we deduce that the cocycle
$\Delta_{i}(\epsilon) = G_{i+1}(\epsilon) - G_{i}(\epsilon)$ fulfills estimates of the form (\ref{s_exp_flat}) on
$Z_{i}=\mathcal{E}_{i+1} \cap \mathcal{E}_{i}$, where $s = \frac{hr_{1}+r_{2}}{r_{3}}$, for all $0 \leq i \leq \nu-1$. According to
Theorem \textbf{(RS)} stated above, we deduce the existence of a formal series $\hat{G}(\epsilon) \in \mathbb{E}[[\epsilon]]$ which is
the Gevrey asymptotic expansion of order $\frac{hr_{1}+r_{2}}{r_{3}}$ of $G_{i}(\epsilon)$ on $\mathcal{E}_{i}$, for all
$0 \leq i \leq \nu-1$. Let us define
\begin{equation}
\hat{G}(\epsilon) = \hat{X}(t,z,x,\epsilon) = \sum_{k \geq 0} H_{k}(t,z,x) \frac{\epsilon^{k}}{k!}. \label{defin_hatG_hatX}
\end{equation}
It only remains to show that $\hat{X}$ is a formal solution of the equation (\ref{main_CP_formal}). From the fact that $G_{i}(\epsilon)$ admits
$\hat{G}(\epsilon)$ as its asymptotic expansion at 0 on $\mathcal{E}_{i}$, one gets
\begin{equation}
\lim_{\epsilon \rightarrow 0, \epsilon \in \mathcal{E}_{i}}
\sup_{t \in \mathcal{T} \cap D(0,h''),z \in H_{\rho_{1}'},x \in D(0,\rho_{1})} | \partial_{\epsilon}^{l}X_{i}(t,z,x,\epsilon) - H_{l}(t,z,x) |=0
\label{lim_partial_l_epsilon_X_i}
\end{equation}
for all $l \geq 0$, all $0 \leq i \leq \nu-1$. Now, we choose an integer $i \in \{ 0, \ldots, \nu-1 \}$. By construction, the function
$X_{i}(t,z,x,\epsilon)$ solves the equation (\ref{main_CP}). We differentiate it $l$ times with respect to
$\epsilon$. By means of Leibniz's rule, we get that $\partial_{\epsilon}^{l}X_{i}(t,z,x,\epsilon)$ satisfies the identity
\begin{multline}
\sum_{h_{1}+h_{2}=l} \frac{l!}{h_{1}!h_{2}!}
\partial_{\epsilon}^{h_1}(\epsilon^{r_3}) (t^{2}\partial_{t}+t)^{r_2}\partial_{x}^{S}\partial_{\epsilon}^{h_2}
X_{i}(t,z,x,\epsilon) + (-i\partial_{z}+1)^{r_1}\partial_{x}^{S}\partial_{\epsilon}^{l}X_{i}(t,z,x,\epsilon)\\
= \sum_{(s,k_{0},k_{1},k_{2}) \in \mathcal{S}} t^{s} (\sum_{h_{1}+h_{2}=l} \frac{l!}{h_{1}!h_{2}!}
\partial_{\epsilon}^{h_1}b_{s,k_{0},k_{1},k_{2}}(z,x,\epsilon)(\partial_{t}^{k_0}\partial_{z}^{k_1}\partial_{x}^{k_2}
\partial_{\epsilon}^{h_2}X_{i})(t,z,x,\epsilon))\\
+  \sum_{(l_{0},l_{1}) \in \mathcal{N}} t^{l_{0}+l_{1}-1} \sum_{h_{0}+h_{1}+\ldots+h_{l_1}=l} \frac{l!}{h_{0}! \cdots h_{l_1}!}
\partial_{\epsilon}^{h_0}c_{l_{0},l_{1}}(z,x,\epsilon) \Pi_{j=1}^{l_1} (\partial_{\epsilon}^{h_j}X_{i})(t,z,x,\epsilon)
\label{main_CP_diff_l_times}
\end{multline}
for all $l \geq 1$, all $(t,z,x,\epsilon) \in (\mathcal{T} \cap D(0,h'')) \times H_{\rho_{1}'} \times D(0,\rho_{1}) \times \mathcal{E}_{i}$.
We let $\epsilon$ tend to 0 in the equality (\ref{main_CP_diff_l_times}) and with the help of (\ref{lim_partial_l_epsilon_X_i}), we get the
following recursions
\begin{multline}
(-i\partial_{z}+1)^{r_1}\partial_{x}^{S}\frac{H_{l}(t,z,x)}{l!}\\
= \sum_{(s,k_{0},k_{1},k_{2}) \in \mathcal{S}} t^{s} (\sum_{h_{1}+h_{2}=l}
\frac{(\partial_{\epsilon}^{h_1}b_{s,k_{0},k_{1},k_{2}})(z,x,0)}{h_{1}!}
\frac{\partial_{t}^{k_0}\partial_{z}^{k_1}\partial_{x}^{k_2}H_{h_2}(t,z,x)}{h_{2}!})\\
+  \sum_{(l_{0},l_{1}) \in \mathcal{N}} t^{l_{0}+l_{1}-1} \sum_{h_{0}+h_{1}+\ldots+h_{l_1}=l}
\frac{(\partial_{\epsilon}^{h_0}c_{l_{0},l_{1}})(z,x,0)}{h_{0}!} \Pi_{j=1}^{l_1}\frac{H_{h_j}(t,z,x)}{h_{j}!}
\label{main_CP_diff_l_times_epsilon_equal_zero_1}
\end{multline}
for all $0 \leq l <r_{3}$, all $(t,z,x) \in (\mathcal{T} \cap D(0,h'')) \times H_{\rho_{1}'} \times D(0,\rho_{1})$, and
\begin{multline}
(t^{2}\partial_{t}+t)^{r_2}\partial_{x}^{S}\frac{H_{l-r_{3}}(t,z,x)}{(l-r_{3})!} +
(-i\partial_{z}+1)^{r_1}\partial_{x}^{S}\frac{H_{l}(t,z,x)}{l!}\\
= \sum_{(s,k_{0},k_{1},k_{2}) \in \mathcal{S}} t^{s} (\sum_{h_{1}+h_{2}=l}
\frac{(\partial_{\epsilon}^{h_1}b_{s,k_{0},k_{1},k_{2}})(z,x,0)}{h_{1}!}
\frac{\partial_{t}^{k_0}\partial_{z}^{k_1}\partial_{x}^{k_2}H_{h_2}(t,z,x)}{h_{2}!})\\
+  \sum_{(l_{0},l_{1}) \in \mathcal{N}} t^{l_{0}+l_{1}-1} \sum_{h_{0}+h_{1}+\ldots+h_{l_1}=l}
\frac{(\partial_{\epsilon}^{h_0}c_{l_{0},l_{1}})(z,x,0)}{h_{0}!} \Pi_{j=1}^{l_1}\frac{H_{h_j}(t,z,x)}{h_{j}!}
\label{main_CP_diff_l_times_epsilon_equal_zero_2}
\end{multline}
for every $l \geq r_{3}$ and all $(t,z,x) \in (\mathcal{T} \cap D(0,h'')) \times H_{\rho_{1}'} \times D(0,\rho_{1})$. Since the functions
$b_{s,k_{0},k_{1},k_{2}}(z,x,\epsilon)$ and $c_{l_{0},l_{1}}(z,x,\epsilon)$ are analytic with respect to $\epsilon$ near the origin in
$\mathbb{C}$, we get that
\begin{equation}
b_{s,k_{0},k_{1},k_{2}}(z,x,\epsilon) = \sum_{h \geq 0} \frac{(\partial_{\epsilon}^{h}b_{s,k_{0},k_{1},k_{2}})(z,x,0)}{h!} \epsilon^{h} \ \ , \ \
c_{l_{0},l_{1}}(z,x,\epsilon) = \sum_{h \geq 0} \frac{(\partial_{\epsilon}^{h}c_{l_{0},l_{1}})(z,x,0)}{h!} \epsilon^{h}
\label{Taylor_b_sk_and_c_l}
\end{equation}
for all $(s,k_{0},k_{1},k_{2}) \in \mathcal{S}$, $(l_{0},l_{1}) \in \mathcal{N}$ and all
$(z,x,\epsilon) \in H_{\rho_{1}'} \times D(0,\rho_{1}) \times D(0,\epsilon_{0})$. Finally, gathering the recursions
(\ref{main_CP_diff_l_times_epsilon_equal_zero_1}),  (\ref{main_CP_diff_l_times_epsilon_equal_zero_2}) and the expansions
(\ref{Taylor_b_sk_and_c_l}), one can see that the formal series (\ref{defin_hatG_hatX}) solves the equation (\ref{main_CP_formal}).
\end{proof}

\end{document}